\newcommand{\mz}{\ensuremath{\mathbb Z}}
\newcommand{\mh}{\ensuremath{\mathbb H}}
\newcommand{\mc}{\ensuremath{\mathbb C}}
\newcommand{\mymod}{\ensuremath{\negthickspace \negmedspace \pmod}}
\newcommand{\intR}{\int_{-\infty}^{\infty}}
\DeclareMathOperator{\erfc}{erfc}
\DeclareMathOperator{\stabp}{sp}
\theoremstyle{plain}		
	\newtheorem{mytheo}{Theorem} [section]
	\newtheorem{myprop}[mytheo]{Proposition}
	\newtheorem{mycoro}[mytheo]{Corollary}
     \newtheorem{mylemma}[mytheo]{Lemma}
	\newtheorem{mydefi}[mytheo]{Definition}
	\newtheorem{myconj}[mytheo]{Conjecture}
\theoremstyle{remark}
\numberwithin{equation}{section}
\numberwithin{figure}{section}
\begin{document}
\title{Zeros of certain combinations of Eisenstein series}
\author{Sarah Reitzes}
\address{Tufts University}
\email{sarah.reitzes@tufts.edu}
\author{Polina Vulakh}
\address{Bard College}
\email{pv2315@bard.edu}
\author{Matthew P. Young} 
\address{Department of Mathematics \\
	  Texas A\&M University \\
	  College Station \\
	  TX 77843-3368 \\
		U.S.A.}
\email{myoung@math.tamu.edu}

\thanks{This work was initiated in summer 2015 during an REU conducted at Texas A\&M University.  The authors thank the NSF and the Department of Mathematics at Texas A\&M for supporting the REU.  In addition, this material is based upon work of M.Y. supported by the National Science Foundation under agreement No. DMS-1401008.  Any opinions, findings and conclusions or recommendations expressed in this material are those of the authors and do not necessarily reflect the views of the National Science Foundation.}

\begin{abstract}
We prove that if $k$ and $\ell$ are sufficiently large, then all the zeros of the weight $k+\ell$ cusp form $E_k(z) E_{\ell}(z) - E_{k+\ell}(z)$ in the standard fundamental domain lie on the boundary.  We moreover find formulas for the number of zeros on the bottom arc with $|z|=1$, and those on the sides with $x = \pm 1/2$.  One important ingredient of the proof is an approximation of the Eisenstein series in terms of the Jacobi theta function.
\end{abstract}

\maketitle
\section{Introduction}
\subsection{Statement of results}
Let $k\geq 4$ be an even integer and let $E_k(z)$ denote the usual holomorphic weight $k$ Eisenstein series defined by
\begin{equation}
\label{eq:EkczdDefinition}
 E_k(z) = \sum_{\gamma \in \Gamma_{\infty} \backslash \Gamma} j(\gamma, z)^{-k} = \frac12 \sum_{(c,d) =1} \frac{1}{(cz + d)^k},
\end{equation}
where $\Gamma = PSL_2(\mz)$, $\Gamma_{\infty} \subset \Gamma$ is the stabilizer of $\infty$, and $j(\gamma, z) = cz + d$ for $\gamma = (\begin{smallmatrix} a & b \\ c & d \end{smallmatrix})$.  
%Then $E_k$ is a weight $k$ holomorphic modular form for $\Gamma$.  

Let $\mathcal{F}$ denote the closure of the standard fundamental domain for $\Gamma \backslash \mathbb{H}$, namely $\mathcal{F} = \{z \in \mh : |z| \geq 1, \text{and } |x| \leq 1/2 \}$.  Rankin and Swinnerton-Dyer \cite{RS} showed all the zeros of $E_k$ in $\mathcal{F}$ lie on the bottom arc $|z|=1$.

In this paper we study the zeros of the weight $k+\ell$ cusp forms defined by
\begin{equation}
 \Delta_{k,\ell} = E_k E_{\ell} - E_{k+\ell}.
\end{equation}
In a few special cases, $\Delta_{k, \ell}$ vanishes identically, namely when (and only when) $k+\ell \in \{8, 10,  14 \}$.  For the rest of this paper we take the convention that $k+\ell \geq 16$ to avoid these trivial cusp forms.
%Since $\Delta_{k,\ell}$ is a cusp form, it has a zero at $\infty$.
By symmetry we shall assume $k \geq \ell$.

The functions $\Delta_{k, \ell}$ are natural to study for a few reasons.  The most basic motivation comes from the fact that $\Delta_{k, \ell}$ are (arguably) the easiest cusp forms to construct explicitly.  In constrast, the Hecke eigenforms are rather more difficult to construct for large weights.  A more advanced motivation comes from the quantum unique ergodicity (QUE) conjecture of Rudnick and Sarnak \cite{RudnickSarnak}, which is an equidistribution statement for Hecke-Maass cusp forms of large Laplace eigenvalue and for holomorphic Hecke cusp forms of large weight.  Holowinsky and Soundararajan \cite{HolowinskySound} have proven QUE for holomorphic forms.  Meanwhile, QUE for Maass forms was proved by Lindenstrauss \cite{Lindenstrauss} and Soundararajan \cite{SoundQUE}.  In fact, in an even earlier paper, Luo and Sarnak \cite{LuoSarnakQUE} proved QUE for Eisenstein series in the spectral aspect.  One may wonder if there is an analog of QUE for large weight Eisenstein series.  Of course, $E_k^2$ is not a cusp form, so it is natural to project this form onto the cuspidal subspace, which is precisely $\Delta_{k,k}$.  It is an obvious generalization to examine $\Delta_{k, \ell}$ for general $k,\ell$.

Many authors have studied the zeros of modular forms, including \cite{RS}
\cite{AKN} \cite{Kohnen} \cite{Gun} \cite{DukeJenkins}, \cite{Nozaki} \cite{GhoshSarnak}. 

In general we are interested in how these cusp forms $\Delta_{k, \ell}$ behave.  On the one hand, one may expect that $\Delta_{k, \ell}$ has similar properties as a random linear combination of Hecke cusp forms of weight $k+\ell$.  If so, then one would expect that its zeros would equidistribute in the funamental domain as the zeros of Hecke cusp forms do, due to work of Rudnick \cite{Rudnick} and Holowinsky and Soundararajan %\cite{Holowinsky} \cite{Soundararajan} 
\cite{HolowinskySound}.
In constrast to this naive guess, we have
\begin{myconj} \label{conj:mainconj}
All the zeros of $\Delta_{k, \ell}$ lying in the standard fundamental domain are on the boundary, $|z|=1$ or $x=\pm 1/2$. 
\end{myconj}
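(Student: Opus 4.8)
The plan is to combine a single global count of the zeros, via the valence (argument) principle, with local lower bounds for the number of zeros on each piece of $\partial\mathcal{F}$ obtained from sign changes of an explicitly approximated \emph{real-valued} restriction of $\Delta_{k,\ell}$; matching the two counts then forces every zero onto the boundary. The first step is to record the reality properties that make this possible. On the vertical sides, since each $E_k$ has real $q$-coefficients and is $1$-periodic, $\overline{E_k(1/2+iy)}=E_k(-1/2+iy)=E_k(1/2+iy)$, so $E_k$, and hence $\Delta_{k,\ell}$, is real on $x=\pm1/2$. On the bottom arc, writing $z=e^{i\theta}$, the Rankin--Swinnerton-Dyer normalization $F_k(\theta):=e^{ik\theta/2}E_k(e^{i\theta})$ is real, and therefore so is $e^{i(k+\ell)\theta/2}\Delta_{k,\ell}(e^{i\theta})=F_k(\theta)F_\ell(\theta)-F_{k+\ell}(\theta)$.

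On the interior of the arc I would use $F_k(\theta)=2\cos(k\theta/2)+(\text{error})$, coming from the dominant terms $(c,d)=(0,\pm1),(\pm1,0)$ in \eqref{eq:EkczdDefinition}, all remaining terms having modulus strictly exceeding $1$ away from the corners and hence contributing exponentially small errors as $k\to\infty$. A short product-to-sum identity then gives
\[
F_k F_\ell - F_{k+\ell} = 2\cos\Big(\tfrac{(k-\ell)\theta}{2}\Big) + (\text{error}),
\]
so the leading term oscillates with $\asymp(k-\ell)/6$ sign changes over $\theta\in[\pi/3,2\pi/3]$, which the symmetry $z\mapsto -1/z$ folds to $\asymp(k-\ell)/12$ distinct zeros in $\Gamma\backslash\mathbb{H}$. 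On the side $x=1/2$ the analogous dominant terms $(c,d)=(\pm1,0),(\pm1,\mp1)$ yield $E_k(1/2+iy)=1+2r^{-k}\cos(k\phi)+(\text{error})$, where $1/2+iy=re^{i\phi}$ with $r\geq1$ and $\phi\in[\pi/3,\pi/2)$. Multiplying out $E_kE_\ell-E_{k+\ell}$ cancels the constants and leaves $2r^{-\ell}\cos(\ell\phi)$ as the largest surviving term (the smallest exponent dominates since $r>1$), giving $\asymp\ell/6$ sign changes as $y$ runs over $(\sqrt3/2,\infty)$.

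Next I would invoke the valence formula for the weight-$(k+\ell)$ cusp form $\Delta_{k,\ell}$, which assigns total weighted vanishing order $(k+\ell)/12$, of which one unit sits at the cusp (the $q^1$-coefficients $-\tfrac{2k}{B_k}-\tfrac{2\ell}{B_\ell}+\tfrac{2(k+\ell)}{B_{k+\ell}}$ do not generally cancel). Comparing with the boundary tallies, the leading terms already balance, $\tfrac{k-\ell}{12}+\tfrac{\ell}{6}=\tfrac{k+\ell}{12}$, so the boundary carries essentially all the zeros the form possesses. Since the sign-change counts are genuine lower bounds and the valence count an exact upper bound, forcing these into exact equality rules out interior zeros; the $O(1)$ discrepancy between the two sides (the cusp order and the elliptic weights $\tfrac12$ at $i$ and $\tfrac13$ at $\rho$) must be tracked precisely to close the argument, which is where the boundary formulas promised in the abstract come from.

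The hard part will be the three corner regions $z=\rho$, $z=e^{i\pi/3}$, and $z=i$ — especially the first two, where the arc meets a side. There the neglected lattice terms have modulus exactly $1$, so both the arc approximation and the side approximation degenerate (on the side this is visible as $r\to1$, which destroys the gap $r^{-(k-\ell)}$ protecting the main term), and the separate sign-change counts must be glued together without losing or gaining a zero into the interior. This is precisely where I would use the approximation of $E_k$ by the Jacobi theta function: zooming in at a corner, the relevant lattice sum becomes a theta-type sum furnishing a single uniform approximation that bridges the arc and the side, controls the order of vanishing at the elliptic point, and locates the nearby boundary zeros. Establishing that this approximation holds uniformly for all sufficiently large $k,\ell$ with $k\geq\ell$ — including the regime $k\gg\ell$, where the surviving main terms above must still dominate their errors — is the principal technical obstacle; the remainder is bookkeeping with the valence formula.
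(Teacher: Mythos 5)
Your overall architecture—valence formula as an exact global count, sign changes of real-valued restrictions as lower bounds on the arc and the sides, then forcing equality to exclude interior zeros—is exactly the paper's strategy (note that the statement is a conjecture which the paper itself only establishes for $k \geq \ell \geq L$, Theorem \ref{thm:mainthmAllZerosOnBoundary}; neither your outline nor the paper reaches small $\ell$). However, your treatment of the vertical side contains a genuine gap. The claim that $E_k(1/2+iy) = 1 + 2r^{-k}\cos(k\phi) + (\text{error})$ with negligible error throughout $y \in (\sqrt{3}/2,\infty)$ is false once $y \gg \sqrt{k}$: for $z = 1/2+iy$ one has
\begin{equation*}
\Big(\frac{|z+d|}{|z|}\Big)^{k} = \Big(1 + \frac{2d+d^2}{\tfrac14+y^2}\Big)^{k/2} = \exp\Big(O\Big(\frac{d^2 k}{y^2}\Big)\Big),
\end{equation*}
so for $y \gg \sqrt{k}$ the terms $(z+d)^{-k}$ with $|d| \ll y/\sqrt{k}$ are all comparable to the two you keep, and the lattice sum must be treated as a theta-type sum (and for $y \gg k^{1/2+\varepsilon}$ via the Fourier expansion, where a single frequency $n \approx k/(2\pi y)$ dominates). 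This matters quantitatively: the sample points $\theta_m = \pi m/\ell$ require $y$ up to $\asymp \ell$, and roughly $\ell^{3/5}$ of your claimed $\asymp \ell/6$ sign changes lie in the region where the naive approximation fails—far more than the $O(1)$ slack the valence formula tolerates. Closing this is where the bulk of the paper's analytic work lives (Theorem \ref{thm:EisensteinApproxTheorem}, Lemma \ref{lemma:EkApproxFourier}, Corollary \ref{coro:Hkapproximations}, Proposition \ref{prop:Hksamplepoints}).

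Relatedly, you place the Jacobi theta function at the wrong location. At the corner $e^{i\pi/3}$ only the finitely many unit lattice vectors have modulus near $1$, and all remaining terms stay uniformly exponentially damped, so no theta-type sum arises there; the paper handles the corners by retaining a finite number of explicit secondary terms (the functions $M_{k,\ell}$ and $P_{k,\ell}$) and, for the delicate extra zero near $\theta = \pi/3$, by evaluating first and second derivatives of these main terms at $\pi/3$. The theta approximation, with its modularity relation, is instead what bridges the transition region $y \asymp k^{1/2}$ in the \emph{middle} of the side, between the lattice-sum regime and the Fourier regime. Finally, your plan to "track the $O(1)$ discrepancy" conceals real structure: the exact bookkeeping depends on $k-\ell \bmod 12$ and $\ell \bmod 6$, on parity/congruence constraints for $\nu_i$ and $\nu_\rho$ forced by the valence formula, and on a stabilization phenomenon (the threshold $\stabp_j(\ell)$ of \eqref{eq:stabpdef}, roughly $k \gtrless 2\ell$) in which one zero migrates between the arc and the side—detected by the sign of a second-derivative polynomial such as $2k^2 + k(1-2\ell) - \ell^2 - \ell$. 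Without these ingredients the matching of lower bounds to the valence count cannot be made exact, so the final step of your argument does not close as stated.
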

This conjecture was born out of numerical evidence for $k,\ell \leq 100$.  Our main result in this paper proves the conjecture for $\ell$ sufficiently large.
\begin{mytheo}
\label{thm:mainthmAllZerosOnBoundary}
There exists an effective constant $L$ so that if $k \geq \ell \geq L$, then all the zeros of $\Delta_{k,\ell}$ in $\mathcal{F}$ lie on its boundary.
\end{mytheo}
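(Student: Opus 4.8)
The plan is to reduce the theorem to a zero-counting identity: I will produce accurate main terms for $\Delta_{k,\ell}$ on each piece of $\partial\mathcal F$ (the bottom arc $|z|=1$ and the sides $x=\pm\tfrac12$), count the sign changes of these main terms, and then match the total against the valence formula, which forces every zero to lie on the boundary. The governing heuristic is that $E_k$ is dominated by its shortest lattice vectors, and that a product-to-sum identity makes the leading terms of $E_k E_\ell$ and $E_{k+\ell}$ cancel in a structured way.

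First, on the arc write $z=e^{i\theta}$ with $\theta\in[\pi/3,2\pi/3]$. Since $\Delta_{k,\ell}$ has real Fourier coefficients and weight $m=k+\ell$, the function $h(\theta)=e^{im\theta/2}\Delta_{k,\ell}(e^{i\theta})$ is real-valued, so zeros of $\Delta_{k,\ell}$ on the arc are exactly sign changes of $h$. Keeping the two shortest lattice vectors in \eqref{eq:EkczdDefinition} gives $e^{ik\theta/2}E_k(e^{i\theta})=2\cos(k\theta/2)+(\text{error})$ away from the corners, as in Rankin--Swinnerton-Dyer. Multiplying the approximations for $E_k,E_\ell$ and subtracting that for $E_{k+\ell}$, the product-to-sum identity collapses the leading terms:
\[
h(\theta)=4\cos\tfrac{k\theta}{2}\cos\tfrac{\ell\theta}{2}-2\cos\tfrac{(k+\ell)\theta}{2}+(\text{error})=2\cos\tfrac{(k-\ell)\theta}{2}+(\text{error}).
\]
Thus I expect about $\tfrac{k-\ell}{6}$ sign changes on the full arc, i.e. $\tfrac{k-\ell}{12}$ after the fold $\theta\mapsto\pi-\theta$ induced by $z\mapsto-1/z$.

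Next, on the side $z=\tfrac12+iy$ the reflection $z\mapsto-\bar z=z-1$ together with periodicity shows $\Delta_{k,\ell}(\tfrac12+iy)$ is real. Writing $\tfrac12+iy=re^{i\phi}$ with $r=\sqrt{\tfrac14+y^2}$ and $\phi=\arg(\tfrac12+iy)\in(\pi/3,\pi/2)$, the lattice rows $c=0$ and $c=\pm1$ give $E_k(\tfrac12+iy)=1+2r^{-k}\cos(k\phi)+\cdots$, and the same cancellation yields
\[
\Delta_{k,\ell}(\tfrac12+iy)=2r^{-\ell}\cos(\ell\phi)+2r^{-k}\cos(k\phi)+2r^{-(k+\ell)}\cos((k-\ell)\phi)+\cdots,
\]
whose leading term $2r^{-\ell}\cos(\ell\phi)$ (recall $k\ge\ell$, so $r^{-\ell}$ is the largest) produces about $\tfrac{\ell}{6}$ sign changes as $\phi$ runs over $(\pi/3,\pi/2)$. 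Adding the two counts gives $\tfrac{k-\ell}{12}+\tfrac{\ell}{6}=\tfrac{k+\ell}{12}$, which is exactly the total order of vanishing in $\Gamma\backslash\mathbb H$ of the weight $k+\ell$ cusp form $\Delta_{k,\ell}$ given by the valence formula. Hence the boundary sign changes already account for all the zeros, none can lie in the interior, and the elliptic points $i,\rho$ and the cusp contribute the $O(1)$ corrections that must be tracked to pin down the effective $L$.

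Turning these heuristics into genuine inequalities is the crux, and the engine is the Jacobi theta approximation advertised in the abstract. The point is that the Fourier expansion $E_k=1-\tfrac{2k}{B_k}\sum_{n\ge1}\sigma_{k-1}(n)q^n$ has terms peaking sharply near $n_\ast\approx k/(2\pi y)$; expanding the exponent to second order about $n_\ast$ converts the sum into a Gaussian times $e(nx)$, i.e. a Jacobi theta function $\vartheta$, uniformly for $z$ near $\partial\mathcal F$. I would use this to bound the errors above rigorously and, crucially, uniformly through the corners, where the two-lattice-term picture breaks down because a third vector becomes equally short. The main obstacle is precisely this uniform error control near $\rho$ and $i$ and out toward the cusp along the side: there the leading amplitude $r^{-\ell}$ is exponentially small, so I need \emph{relative} rather than absolute accuracy, strong enough that every predicted sign change survives and no spurious one is created, and sharp enough that the boundary count equals $\tfrac{k+\ell}{12}$ on the nose. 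Because the final step is a tight equality between a sign-change lower bound and the valence upper bound, there is no slack; the whole argument hinges on making the theta-based estimates both sharp and effective, which is what ultimately dictates how large $L$ must be taken.
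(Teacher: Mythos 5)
Your proposal follows essentially the same route as the paper: restrict to the boundary where the suitably rotated form is real, extract the main terms $2\cos(\tfrac{(k-\ell)\theta}{2})$ on the arc and $2|z|^{-\ell}\cos(\ell\theta)$ on the side from the shortest lattice vectors via the product-to-sum cancellation, count sign changes at sample points, control the transition toward the cusp with the Jacobi theta function (your Fourier-peak Gaussian expansion is the modularity-dual of the paper's expansion of $\sum_d (z+d)^{-k}$, both being Poisson summation), and close with the valence formula. The issues you flag as the crux --- uniformity near $\rho$, relative accuracy in the regime $y \asymp k^{1/2}$ and beyond, and the $O(1)$ bookkeeping of trivial zeros --- are precisely where the paper spends its effort (secondary terms and derivative estimates at $\theta = \pi/3$, the multi-regime analysis of $H_k$, and the stabilization-point case split), so your sketch is a faithful outline of the actual proof.
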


The zeros naturally split up into those on the arc, with $|z|=1$, and those on the sides, with $x= \pm 1/2$ (there may also be ``trivial'' zeros at $\rho$ and $i$ which are guaranteed by the valence formula depending on the value of $k+\ell$ modulo $12$, which we do not count in this dicussion).  
One of the curious features of the functions $\Delta_{k, \ell}$ is that the relative proportion of zeros on the arc and on the sides depends strongly on the relative sizes of $k$ and $\ell$.  In particular, in one extreme direction if $k=\ell$ then it appears all the zeros are on the sides.  In the opposite extreme,  if $\ell=4$ and $k \geq 12$, then all the zeros are apparently on the arc.  

Given $k$ and $\ell$, define $A_{k,\ell}$ to be the number of zeros of $\Delta_{k,\ell}$ on the arc $\{ e^{i \theta} : \frac{\pi}{3} < \theta < \frac{\pi}{2} \}$, and $B_{k,\ell}$ to be the number of zeros on the side $x=1/2$, $y > \frac{\sqrt{3}}{2}$.  %In these terms, it appears that $A_{k,k} = 0$ and $B_{k,k} = ?$. 
The number of zeros of a weight $k+\ell$ modular form is dictated by the valence formula:
\begin{equation}
\label{eq:valenceFormula}
\tfrac{k+\ell}{12}=\tfrac{1}{2}v_{i}(f)+\tfrac{1}{3}v_{\rho}(f)+\underset{\underset{z \in \mathbb{H}}{z\neq i, \rho}}{\sum} v_z(f),
\end{equation}
where $v_{i}(f)$ and $v_{\rho}(f)$ are the orders of vanishing of $f$ at $i$ and $\rho = e^{\pi i/3}$, respectively.  

Thus, Conjecture \ref{conj:mainconj} is equivalent to the formula
\begin{equation}
\label{eq:valenceWithAklBkl}
A_{k,\ell} + B_{k,\ell} = \tfrac{k+ \ell}{12}- \tfrac{1}{2}v_{i}(\Delta_{k,\ell})-\tfrac{1}{3}v_{\rho}(\Delta_{k,\ell}) - 1,
\end{equation}
where the term $-1$ accounts for the zero at $\infty$.  Let us agree to call the zeros at $i$, $\rho$, and $\infty$ guaranteed to exist by the valence formula the trivial zeros, and the rest of the zeros the nontrivial zeros.  

\begin{myconj}
\label{conj:allzerosonarc}
For $k \geq 14$, all of the zeros of $\Delta_{k,4}$, $\Delta_{k,6}$, and $\Delta_{k,8}$ in the fundamental domain lie on the arc $| z | = 1$.
\end{myconj}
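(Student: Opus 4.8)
To prove Conjecture~\ref{conj:allzerosonarc}, the plan is to adapt the arc-counting argument behind Theorem~\ref{thm:mainthmAllZerosOnBoundary} to the regime of small, fixed $\ell\in\{4,6,8\}$, and to show that the open arc already carries the \emph{full} complement of nontrivial zeros, so that none can remain on the sides or in the interior. Parametrize the arc by $z=e^{i\theta}$, $\theta\in[\tfrac\pi3,\tfrac\pi2]$, and for a weight $w$ form set $\phi_w(\theta)=e^{iw\theta/2}E_w(e^{i\theta})$, which is real valued. Then
\begin{equation*}
F(\theta):=e^{i(k+\ell)\theta/2}\Delta_{k,\ell}(e^{i\theta})=\phi_k(\theta)\phi_\ell(\theta)-\phi_{k+\ell}(\theta)
\end{equation*}
is real, and each sign change of $F$ on $(\tfrac\pi3,\tfrac\pi2)$ forces a zero of $\Delta_{k,\ell}$ there. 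Since $A_{k,\ell}+B_{k,\ell}+(\text{interior zeros})=\tfrac{k+\ell}{12}-\tfrac12 v_i(\Delta_{k,\ell})-\tfrac13 v_\rho(\Delta_{k,\ell})-1$ by \eqref{eq:valenceFormula}, with all terms nonnegative, it suffices to produce that many sign changes of $F$ on the open arc: the resulting lower bound on $A_{k,\ell}$ then meets the valence upper bound, forcing $B_{k,\ell}=0$ and no interior zeros. Thus no separate analysis of the sides $x=\pm\tfrac12$ is needed, provided the arc count is made exact.

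The key structural input is a cancellation at the top of the frequency spectrum. Using the theta-function approximation of the Eisenstein series to write $\phi_k(\theta)=2\cos(\tfrac{k\theta}{2})+(\text{error})$ and $\phi_{k+\ell}(\theta)=2\cos(\tfrac{(k+\ell)\theta}{2})+(\text{error})$ uniformly on $[\tfrac\pi3,\tfrac\pi2]$ --- while keeping $\phi_\ell$ as the exact fixed function, since $\ell$ is bounded --- the leading term of $F$ becomes
\begin{equation*}
M(\theta)=2\phi_\ell(\theta)\cos(\tfrac{k\theta}{2})-2\cos(\tfrac{(k+\ell)\theta}{2})=R(\theta)\cos\!\big(\tfrac{k\theta}{2}-\psi(\theta)\big),
\end{equation*}
where $R(\theta)=2\,|\phi_\ell(\theta)-e^{-i\ell\theta/2}|$ and $\psi(\theta)=\arg\big(\phi_\ell(\theta)-e^{-i\ell\theta/2}\big)$. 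The two rapid oscillations at frequency $\tfrac{k+\ell}{2}$ cancel, leaving a single oscillation at the slowly modulated frequency $\tfrac k2-\psi'(\theta)$, so that counting crossings of $M$ gives
\begin{equation*}
\#\{\text{sign changes of }M\}=\frac{k}{12}-\frac{\psi(\tfrac\pi2)-\psi(\tfrac\pi3)}{\pi}+O(1).
\end{equation*}
The entire count is therefore governed by the winding of the phase $\psi$, i.e.\ by the fixed function $\phi_\ell$.

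Here the choice $\ell\in\{4,6,8\}$ is decisive: by Rankin--Swinnerton-Dyer together with the valence formula, $E_4,E_6,E_8$ have no zeros on the open arc (their zeros in $\mathcal F$ sit only at the corners $\rho$ or $i$), so $\phi_\ell$ has constant sign there and $R(\theta)$ stays bounded away from $0$. One then computes the phase winding case by case. For $\ell=4$, for instance, $\phi_4$ is negative on $(\tfrac\pi3,\tfrac\pi2]$ with $\phi_4(\tfrac\pi2)=-E_4(i)$; since $\sin 2\theta>0$ the point $\phi_4(\theta)-e^{-2i\theta}$ stays in the upper half plane, starting at argument $\tfrac\pi3$ at $\theta=\tfrac\pi3$ and, because $E_4(i)>1$, ending at argument $\pi$ at $\theta=\tfrac\pi2$. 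This gives $\psi(\tfrac\pi2)-\psi(\tfrac\pi3)=\tfrac{2\pi}{3}$ and a count $\tfrac{k}{12}-\tfrac23=\tfrac{k+4}{12}-1$, exactly the valence total when $v_i=v_\rho=0$. The analogous computations for $\ell=8$ (where $\phi_8=\phi_4^2$) and $\ell=6$ (where $\phi_6$ vanishes at $i$, so the endpoint degenerates) should produce the matching totals, once the corner orders $v_i,v_\rho$ are read off for each residue of $k$ modulo $12$.

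The hard part is upgrading this from asymptotic to exact. Two points demand genuine work. First, near $\rho$ the Eisenstein error terms are not small, so one must invoke the theta-function approximation to control simultaneously both the errors and a lower bound for $R(\theta)$ all the way up to $\theta=\tfrac\pi3$, and to pin down $v_\rho(\Delta_{k,\ell})$; the values of $v_\rho$ and $v_i$ enter the target count directly and depend on $k\bmod 12$. Second, the $O(1)$ in the crossing count must be eliminated by a precise endpoint analysis at both corners, matching the $\tfrac12 v_i$ and $\tfrac13 v_\rho$ terms of \eqref{eq:valenceFormula} in every residue class; the case $\ell=6$, where $R$ collapses at $i$, is the most delicate. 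This corner bookkeeping, carried out uniformly in $k$ and separately for each of the three values of $\ell$, is where I expect the main obstacle to lie.
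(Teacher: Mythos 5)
You should first be aware that the paper contains no proof of this statement: it is stated as Conjecture \ref{conj:allzerosonarc}, the authors explicitly remark that the first-named author's proof for $\ell=4,6,8$ was \emph{omitted} for brevity, and they note that their own machinery (Lemma \ref{lemma:FkapproxOnArc} and Proposition \ref{prop:Mksamplepoints}, which require $k\geq\ell\geq 14$, and the side analysis which requires $\ell\geq L$) is unlikely to reach $\ell=4$. So your proposal can only be judged on its own merits and against the paper's general method. On that score, your central identity is correct and is a genuinely good adaptation to bounded $\ell$: since $\phi_\ell$ is real, $2\phi_\ell(\theta)\cos(\tfrac{k\theta}{2})-2\cos(\tfrac{(k+\ell)\theta}{2})=2\,\mathrm{Re}\bigl[e^{-ik\theta/2}\bigl(\phi_\ell(\theta)-e^{-i\ell\theta/2}\bigr)\bigr]=R(\theta)\cos\bigl(\tfrac{k\theta}{2}-\psi(\theta)\bigr)$, and keeping $\phi_\ell$ exact is exactly what one must do for $\ell\in\{4,6,8\}$, where the Rankin--Swinnerton-Dyer tail bound \eqref{eq:Rkthetabound} is useless and the terms $(2\cos(\tfrac\theta2))^{-\ell}$, $(2\sin(\tfrac\theta2))^{-\ell}$ are comparable to the main term. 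Your $\ell=4$ winding computation (phase increment $\tfrac{2\pi}{3}$, count $\tfrac{k}{12}-\tfrac23=\tfrac{k+4}{12}-1$) does check out against \eqref{eq:valenceFormula} when $v_i=v_\rho=0$.

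The genuine gap, which you partly concede, is that everything you actually establish is a sign-change count with an $O(1)$ error, and for this conjecture the $O(1)$ \emph{is} the entire problem: an all-but-$O(1)$ statement is the analogue of Proposition \ref{prop:CrudeEquality} and is comparatively cheap. Worse, the approximation $F\approx M$ with small error is simply false near $\theta=\tfrac\pi3$: discarding $\phi_k-2\cos(\tfrac{k\theta}{2})$ and $\phi_{k+\ell}-2\cos(\tfrac{(k+\ell)\theta}{2})$ throws away the terms $(2i\sin(\tfrac\theta2))^{-k}$ and $(2i\sin(\tfrac\theta2))^{-(k+\ell)}$, whose modulus equals $1$ at $\theta=\tfrac\pi3$ --- this is precisely why the paper's main term \eqref{eq:MkellDef} retains them (see the remark following Lemma \ref{lemma:FkapproxOnArc}). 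Thus in the $\asymp 1/k$ neighborhood of $\rho$, where the decisive zeros sit and where $v_\rho$ and $v_i$ must be matched residue class by residue class, your main term is not the main term. Moreover the tool you invoke for that region, ``the theta-function approximation,'' cannot help: Theorem \ref{thm:EisensteinApproxTheorem} approximates $E_k$ on the vertical line $x=\tfrac12$ for $y>k^{2/5}$ and says nothing about the arc near $\rho$. What is missing is the analogue of the paper's corner analysis --- auxiliary sample points near $\tfrac\pi3$ and the derivative computations $M_{k,\ell}'(\tfrac\pi3)$, $M_{k,\ell}''(\tfrac\pi3)$ of Propositions \ref{prop:ExtraZerol=0mod6} and \ref{prop:ExtraZerol=2mod6} and of the exceptional-zero section --- redone with your exact-$\phi_\ell$ main term, separately for $\ell=4,6,8$ and for each residue of $k$ modulo $12$; that case analysis is the actual proof, and it is absent from your proposal. (A minor slip: for $\ell=6$ the modulus $R$ does not collapse at $i$, since $\phi_6(\tfrac\pi2)=0$ while $\bigl|e^{-3i\pi/2}\bigr|=1$, giving $R(\tfrac\pi2)=2$; the delicacy at $i$ is of a different nature.)
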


\begin{myprop}
\label{prop:nozerosonarc}
For $\ell$ large enough, each of $\Delta_{\ell,\ell}$, $\Delta_{\ell+2,\ell}$, $\Delta_{\ell+4,\ell}$, $\Delta_{\ell+6,\ell}$, and $\Delta_{\ell+10,\ell}$ have its nontrivial zeros on the sides $x= \pm 1/2$.  For $\ell$ large, $\Delta_{\ell+8,\ell}$ has one nontrivial zero on the arc $|z|=1$, and all the rest on the sides $x = \pm 1/2$.
\end{myprop}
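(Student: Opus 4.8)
The plan is to derive Proposition \ref{prop:nozerosonarc} from Theorem \ref{thm:mainthmAllZerosOnBoundary} by pinning down exactly how many zeros lie on the arc. Since Theorem \ref{thm:mainthmAllZerosOnBoundary} already places every nontrivial zero on the boundary once $\ell \geq L$, and the reflection $z \mapsto -\bar z$ identifies the two sides as well as the two halves of the arc, it suffices to compute $A_{k,\ell}$, the number of zeros on the half-arc $\{e^{i\theta} : \pi/3 < \theta < \pi/2\}$; whatever nontrivial zeros are not accounted for there are automatically forced onto the sides $x = \pm 1/2$. Thus the entire content of the Proposition is the assertion that $A_{k,\ell} = 0$ when $k - \ell \in \{0,2,4,6,10\}$ and $A_{k,\ell} = 1$ when $k - \ell = 8$.

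First I would carry out the standard reality reduction on the arc: writing $z = e^{i\theta}$ and combining the modular transformation under $z \mapsto -1/z$ with the fact that $\Delta_{k,\ell}$ has real Fourier coefficients, one checks that
\begin{equation*}
g_{k,\ell}(\theta) := e^{i(k+\ell)\theta/2}\,\Delta_{k,\ell}(e^{i\theta})
\end{equation*}
is real-valued on $[\pi/3, \pi/2]$, so that the zeros counted by $A_{k,\ell}$ are precisely its interior sign changes. Next I would feed in the Eisenstein-series approximation (the Jacobi-theta estimate advertised in the abstract). Expanding each factor as $e^{im\theta/2}E_m(e^{i\theta}) = 2\cos(m\theta/2) + R_m(\theta)$, where $R_m$ collects the terms with $|cz+d| > 1$ and is therefore exponentially small in $m$ as long as $\theta$ stays away from the corner $\pi/3$, and applying the product-to-sum identity, the high-frequency pieces cancel:
\begin{equation*}
g_{k,\ell}(\theta) = 4\cos\!\big(\tfrac{k\theta}{2}\big)\cos\!\big(\tfrac{\ell\theta}{2}\big) - 2\cos\!\big(\tfrac{(k+\ell)\theta}{2}\big) + (\text{error}) = 2\cos\!\Big(\tfrac{(k-\ell)\theta}{2}\Big) + (\text{error}),
\end{equation*}
the term $2\cos(\tfrac{(k+\ell)\theta}{2})$ from $E_k E_\ell$ being annihilated by the one from $E_{k+\ell}$. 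The main term depends on $k,\ell$ only through $d := k-\ell$, and the error is exponentially small in $\ell$ uniformly for $\theta \in [\pi/3 + \delta, \pi/2]$.

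The count is now elementary trigonometry. As $\theta$ runs over $(\pi/3,\pi/2)$ the argument $d\theta/2$ runs over $(d\pi/6, d\pi/4)$, and a direct inspection shows that for $d \in \{0,2,4,6,10\}$ the function $\cos(d\theta/2)$ keeps a constant sign on the open interval, vanishing only at the right endpoint $\theta = \pi/2$ exactly when $d \equiv 2 \pmod 4$ --- which is precisely the forced trivial zero of $\Delta_{k,\ell}$ at $i$ (present iff $k + \ell \equiv 2 \pmod 4$), hence not counted by $A_{k,\ell}$ --- whereas for $d = 8$ the function $\cos(2\theta)$ has a single simple interior zero at $\theta = 3\pi/8$. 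Since the main term is bounded away from $0$ at $\theta = \pi/3$ in all six cases, and in the interior is either nonvanishing or has a simple zero at $3\pi/8$, the exponentially small error can neither create nor destroy interior sign changes once $\ell$ is large. This gives $A_{k,\ell} = 0$ for $d \in \{0,2,4,6,10\}$ and $A_{k,\ell} = 1$ for $d = 8$, and combining with Theorem \ref{thm:mainthmAllZerosOnBoundary} (with the valence formula \eqref{eq:valenceFormula} accounting for the total zero count, so that the remaining nontrivial zeros indeed lie on the sides) completes the proof.

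The main obstacle is the pinched region near the corner $\rho = e^{i\pi/3}$, where $|z-1| \to 1$ so that the first omitted term in $R_m$ ceases to decay, the elementary approximation $g_{k,\ell} \approx 2\cos(d\theta/2)$ degrades, and $\Delta_{k,\ell}$ may itself carry a forced zero at $\rho$ of order depending on $k+\ell \bmod 3$. Ruling out spurious interior sign changes of $g_{k,\ell}$ throughout a fixed neighborhood $(\pi/3, \pi/3 + \delta)$ is exactly where the finer Jacobi-theta approximation is needed in place of the crude Rankin--Swinnerton-Dyer bound. Near the other corner $i$ the situation is milder: for $d \in \{2,6,10\}$ the forced zero there is simple because the main term has nonvanishing derivative at $\pi/2$, so a short local estimate shows $g_{k,\ell}$ has no additional zero just inside the endpoint. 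Once the $\rho$-corner is controlled, the interior count is robust and the Proposition follows.
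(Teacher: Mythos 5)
Your proposal has two genuine gaps. The first is circularity: Theorem \ref{thm:mainthmAllZerosOnBoundary} is not available as an input here, because in the paper it is \emph{deduced} from the subsidiary results, and the cases $k-\ell\in\{0,2,4,6,8,10\}$ (i.e.\ $n=0$) of that deduction are supplied precisely by Proposition \ref{prop:nozerosonarc}; Corollary \ref{coro:allzerosonboundary} only covers $n\geq 1$ (and in addition requires $k \geq \stabp_j(\ell)$). So you may not assume that all zeros lie on the boundary --- that is part of what must be proven. The second gap is that, even granting the reduction, your plan requires an \emph{upper} bound on $A_{k,\ell}$: you must show the real-valued function on the arc has \emph{no} interior zeros when $k-\ell\in\{0,2,4,6,10\}$ and exactly one when $k-\ell=8$, not merely exhibit sign changes. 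Away from the corner your approximation by $2\cos(\tfrac{(k-\ell)\theta}{2})$ with exponentially small error does this, but on a neighborhood $(\tfrac{\pi}{3},\tfrac{\pi}{3}+\delta)$ the discarded terms $2\cos(\tfrac{k\theta}{2})(2i\sin(\tfrac{\theta}{2}))^{-\ell}$ and $2\cos(\tfrac{\ell\theta}{2})(2i\sin(\tfrac{\theta}{2}))^{-k}$ are \emph{not} small (the factor $(2\sin(\tfrac{\theta}{2}))^{-\ell}$ equals $1$ at $\theta=\tfrac{\pi}{3}$), and a forced zero of order up to $2$ sits at $\rho$ itself. Ruling out additional zeros in that punctured neighborhood needs a local Taylor/derivative analysis of the kind the paper performs in its section on the exceptional extra zero on the arc; you explicitly flag this as ``the main obstacle'' and then assert it can be handled, but that is exactly the hard part and it is not done. (Your pointer to the Jacobi-theta approximation is also misplaced: in the paper that machinery serves the sides $x=\pm\tfrac12$ in the transition range $y\asymp k^{1/2}$, not the arc near $\rho$.)

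The paper's own proof sidesteps both issues by never bounding $A_{k,\ell}$ from above directly. It proves lower bounds on the side-zero counts $B_{k,\ell}$ (Theorem \ref{thm:Bkvalue} and Proposition \ref{prop:BklLowerBoundAsymptotic}, which are insensitive to $n=0$), proves the single arc lower bound $A_{k,\ell}\geq 1$ only in the case $k-\ell=8$ (via a sign change of $M_{\ell+8,\ell}$ between $\theta=\tfrac{\pi}{2}$, where it is near $2$, and a point at or near $\tfrac{\pi}{3}$, where it is negative), and then observes that these lower bounds already sum to the total number of nontrivial zeros permitted by the valence formula \eqref{eq:valenceWithAklBkl}. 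That counting argument forces every lower bound to be an equality and simultaneously shows no zero lies off the boundary --- which is precisely the content you were trying to establish pointwise. If you want to salvage your direct approach, you would need to carry out the corner analysis honestly; otherwise the efficient route is the paper's: prove the $j=8$ sign change, import the side counts, and let the valence formula close the argument.
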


These special cases exemplify a more general pattern that when $ k - \ell$ is large, $\Delta_{k,\ell}$ tends to have more zeros on the arc, while when $k - \ell$ is small, $\Delta_{k,\ell}$ tends to have more zeros on the sides.  

In this paper, we shall produce tight bounds on the sizes of $A_{k,\ell}$ and $B_{k,\ell}$, culminating in exact formulas for these quantities.  Since the formulas are somewhat complicated and break into cases, we shall proceed in stages, giving more accurate estimates as we go.

As a starting point, we state the following
\begin{myprop}
\label{prop:CrudeEquality}
For $k \geq \ell \geq 14$, we have $A_{k,\ell} = \frac{k-\ell}{12} + O(1)$, and $B_{k,\ell} = \frac{2\ell}{12} + O(1)$, where the implied constants are absolute (that is, uniform in $k,\ell$). 
\end{myprop}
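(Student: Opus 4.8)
The plan is to analyze $\Delta_{k,\ell}$ separately on the arc $|z|=1$ and on the side $x=\tfrac12$, on each of which—after a suitable unimodular twist—it becomes a real-valued function of one real variable whose sign changes can be read off from an explicit main term. Counting guaranteed sign changes yields lower bounds for $A_{k,\ell}$ and $B_{k,\ell}$, and I then play these against the total number of zeros supplied by the valence formula \eqref{eq:valenceFormula} to pin both counts down to within $O(1)$.

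On the arc write $z=e^{i\theta}$ and set $F(\theta)=e^{i(k+\ell)\theta/2}\Delta_{k,\ell}(e^{i\theta})$; this is real because $\Delta_{k,\ell}$ has real Fourier coefficients and transforms with weight $k+\ell$ under $z\mapsto-1/z$. In the open arc the only pairs $(c,d)$ with $|cz+d|=1$ are $(0,\pm1)$ and $(\pm1,0)$, so $E_m(e^{i\theta})=1+e^{-im\theta}+(\text{smaller})$; multiplying out $E_kE_\ell-E_{k+\ell}$ cancels the constant and the $e^{-i(k+\ell)\theta}$ terms and leaves $\Delta_{k,\ell}(e^{i\theta})=e^{-ik\theta}+e^{-i\ell\theta}+(\text{smaller})$, whence $F(\theta)=2\cos\!\big(\tfrac{(k-\ell)\theta}{2}\big)+(\text{smaller})$. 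As $\theta$ runs over $(\tfrac{\pi}{3},\tfrac{\pi}{2})$ the argument $\tfrac{(k-\ell)\theta}{2}$ increases by $\tfrac{(k-\ell)\pi}{12}$, so the main term has about $\tfrac{k-\ell}{12}$ sign changes.

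On the side write $z=\tfrac12+iy=re^{i\phi}$ with $r=(2\cos\phi)^{-1}$ and $\phi=\arctan(2y)\in(\tfrac{\pi}{3},\tfrac{\pi}{2})$; here $\Delta_{k,\ell}(\tfrac12+iy)$ is already real, since $q=e^{2\pi iz}$ is a negative real on $x=\tfrac12$. Keeping the pairs $(0,\pm1),(\pm1,0),(\pm1,\mp1)$ gives $E_m(\tfrac12+iy)=1+2(2\cos\phi)^m\cos(m\phi)+(\text{smaller})$, and the same telescoping in $E_kE_\ell-E_{k+\ell}$ (the $\cos((k+\ell)\phi)$ contributions cancel) leaves
\[
\Delta_{k,\ell}(\tfrac12+iy)=2(2\cos\phi)^\ell\cos(\ell\phi)+2(2\cos\phi)^k\cos(k\phi)+2(2\cos\phi)^{k+\ell}\cos\!\big((k-\ell)\phi\big)+(\text{smaller}).
\]
Since $2\cos\phi<1$ on the open side and $\ell\le k$, the first term dominates, and $\cos(\ell\phi)$ undergoes about $\tfrac{\ell\pi/6}{\pi}=\tfrac{2\ell}{12}$ sign changes as $\phi$ sweeps $(\tfrac{\pi}{3},\tfrac{\pi}{2})$.

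I expect the main difficulty to be the corner $\rho$, where $\theta,\phi\to\tfrac{\pi}{3}$ and $|cz+d|\to1$ for the extra pair $(1,-1)$, so that the subdominant terms cease to be negligible and the naive count breaks down. The crux is that the leading sign nonetheless survives at the extremal points $\phi_m=m\pi/\ell$: writing $c=2\cos\phi$, after factoring out $(-1)^m c^\ell$ the relevant bracket is $1+c^{k-\ell}\cos((k-\ell)\phi_m)+(\text{still smaller})$, and because $k-\ell$ is even the corner value $\cos((k-\ell)\tfrac{\pi}{3})$ never equals $-1$ (it lies in $\{1,-\tfrac12\}$), so this bracket stays bounded away from $0$; the only genuinely ambiguous extrema are the $O(\ell/k)=O(1)$ of them lying within $O(1/k)$ of the corner, where $c^k$ is not yet small. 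The cusp end $\phi\to\tfrac{\pi}{2}$ and the endpoint $\theta\to\tfrac{\pi}{2}$ are harmless, the next pairs there having $|cz+d|=\sqrt2$. Together with a routine geometric-series bound on the omitted tail, this gives $A_{k,\ell}\ge\tfrac{k-\ell}{12}-O(1)$ and $B_{k,\ell}\ge\tfrac{2\ell}{12}-O(1)$. To close, note that by \eqref{eq:valenceFormula} the number of nontrivial zeros of $\Delta_{k,\ell}$ in $\mathcal{F}$ is $\tfrac{k+\ell}{12}+O(1)$, and every zero counted by $A_{k,\ell}$ or $B_{k,\ell}$ is one of these, so $A_{k,\ell}+B_{k,\ell}\le\tfrac{k+\ell}{12}+O(1)$; adding the two lower bounds gives the reverse inequality up to $O(1)$, and subtracting each lower bound in turn forces $A_{k,\ell}=\tfrac{k-\ell}{12}+O(1)$ and $B_{k,\ell}=\tfrac{2\ell}{12}+O(1)$, as claimed.
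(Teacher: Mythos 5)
Your overall skeleton---twisting to real-valued functions, counting sign changes at equally spaced sample points on the arc and on the side, then playing the two lower bounds against the valence formula---is exactly the paper's strategy, and your main terms agree with the paper's $M_{k,\ell}$ and $P_{k,\ell}$. The fatal gap is on the side: the claim that keeping the pairs $(0,\pm1),(\pm1,0),(\pm1,\mp1)$ leaves an error controlled by ``a routine geometric-series bound on the omitted tail'' is false once $y \gtrsim \sqrt{\ell}$. For $z=\tfrac12+iy$ and weight $\ell$ one has $|z|^{\ell}/|z+d|^{\ell} = \bigl(\tfrac{1/4+y^2}{(d+1/2)^2+y^2}\bigr)^{\ell/2} = \exp\bigl(-\tfrac{\ell}{2y^2}(d^2+d)+\cdots\bigr)$, so \emph{every} $d$ with $|d| \lesssim y/\sqrt{\ell}$ contributes an amount comparable to your retained terms, and the omitted tail has total size $\asymp y/\sqrt{\ell}$, which swamps the $O(1)$ trigonometric main term. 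Your sample points $\phi_m=\pi m/\ell$ reach $y_m=\tfrac12\tan\phi_m\asymp \ell$, and the number of them with $y_m \gg \sqrt{\ell}$ is $\asymp\sqrt{\ell}$, not $O(1)$; your remark that the cusp end is ``harmless'' has it backwards, since that end (not the corner) is the hard part of the side. Discarding those points only yields $B_{k,\ell} \geq \tfrac{2\ell}{12}-O(\sqrt{\ell})$, hence after the valence step only $A_{k,\ell}=\tfrac{k-\ell}{12}+O(\sqrt{\ell})$ and $B_{k,\ell}=\tfrac{2\ell}{12}+O(\sqrt{\ell})$, weaker than the proposition. This is precisely why the paper develops the Jacobi theta-function approximation (Theorem \ref{thm:EisensteinApproxTheorem}, Corollary \ref{coro:Hkapproximations}) together with a one-term Fourier approximation for the largest $y$: in that range $H_{\ell}(1/2+iy_m)$ has magnitude $\asymp r^{1/2}\exp(\pi/(4r))$ with $r=2\pi y^2/\ell$ (so growing like $y/\sqrt{\ell}$, not $\approx 2$), and the fact that its sign is still $(-1)^m$ (Proposition \ref{prop:Hksamplepoints}) rests on $\Phi_0(r)<2$ and $\Phi_1(r)<1$ holding on overlapping ranges of $r$---none of which can be extracted from a geometric-series estimate.

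There is a secondary gap on the arc. You implicitly plan to sacrifice the ``ambiguous'' sample points near $\rho$, but the region where the subdominant terms $(2\sin(\theta/2))^{-\ell}$ fail to be small has length $\asymp 1/\ell$, and it contains $\asymp (k-\ell)/\ell$ of your arc sample points $\theta_m=2\pi m/(k-\ell)$---unbounded when $k/\ell\to\infty$. So these cannot be written off as a bounded number of exceptions; one must prove the sign survives at them, which is what the paper's Proposition \ref{prop:Mksamplepoints} does via a case analysis on $\ell \bmod 6$ controlling the factor $\cos(\ell\theta_m/2)$ that multiplies the secondary terms (the exact analogue, on the arc, of your observation on the side that $\cos((k-\ell)\tfrac{\pi}{3})$ never equals $-1$). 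With that analysis supplied, and with the theta-function input above replacing the geometric-series bound, your outline does become the paper's proof.
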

Since the total number of zeros of a weight $k+\ell$ modular form is $\frac{k+\ell}{12} + O(1)$, Proposition \ref{prop:CrudeEquality} shows that all but a bounded number of zeros of $\Delta_{k,\ell}$ are on the boundary.  It is much more difficult to show all the zeros of $\Delta_{k,\ell}$ are on the boundary. 
Our method of proof shows the ostensibly weaker lower bounds $A_{k,\ell} \geq \frac{k-\ell}{12} + O(1)$, and $B_{k,\ell} \geq \frac{2 \ell}{12} + O(1)$, but once these lower bounds are established, the corresponding upper bounds follow by \eqref{eq:valenceWithAklBkl}.  

We also have the following explicit lower bounds.
\begin{myprop}
\label{prop:Akllowerbound}
Suppose $k \geq \ell \geq 14$, and $k-\ell = 12n + j$ with $n \geq 1$ and $0 \leq j < 12$.  Then $A_{k,\ell} \geq N_{k, \ell}$, 
 %$\{ z=e^{i \theta}: \frac{\pi}{3} < \theta < \frac{\pi}{2} \}$,  
 where
 \begin{equation}
  N_{k,\ell} = \begin{cases} n-1, \qquad &j = 0, 2, 6 \\
               n, \qquad &j = 4, 8, 10.
              \end{cases}
 \end{equation}
\end{myprop}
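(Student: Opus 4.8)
The plan is to study $\Delta_{k,\ell}$ on the arc through the real-valued function
\[
F(\theta) := e^{i(k+\ell)\theta/2}\,\Delta_{k,\ell}(e^{i\theta}), \qquad \tfrac{\pi}{3} \le \theta \le \tfrac{\pi}{2}.
\]
That $F$ is real follows from the reality of the Fourier coefficients of $\Delta_{k,\ell}$ together with the relation $\Delta_{k,\ell}(-1/z) = z^{k+\ell}\Delta_{k,\ell}(z)$: for $|z|=1$ one has $-1/z = -\overline z$, and combining $\overline{\Delta_{k,\ell}(z)} = \Delta_{k,\ell}(-\overline z)$ with the modular relation gives $\overline{F(\theta)} = F(\theta)$. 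Since $e^{i(k+\ell)\theta/2}$ never vanishes, the zeros of $\Delta_{k,\ell}$ on the open arc are exactly the zeros of $F$ on $(\pi/3, \pi/2)$, so it suffices to exhibit $N_{k,\ell}$ sign changes of $F$ there.

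First I would isolate the main term. Writing $g_w(\theta) = e^{iw\theta/2}E_w(e^{i\theta})$ for $w \in \{k,\ell,k+\ell\}$, the Rankin--Swinnerton-Dyer expansion gives $g_w(\theta) = 2\cos(w\theta/2) + R_w(\theta)$, where $R_w$ collects the contributions of the pairs $(c,d)$ with $c^2+d^2 \ge 2$. Since $F = g_k g_\ell - g_{k+\ell}$, the identity $4\cos\tfrac{k\theta}{2}\cos\tfrac{\ell\theta}{2} = 2\cos\tfrac{(k+\ell)\theta}{2} + 2\cos\tfrac{(k-\ell)\theta}{2}$ collapses the two weight-$(k+\ell)$ cosines and yields
\[
F(\theta) = 2\cos\bigl(\tfrac{(k-\ell)\theta}{2}\bigr) + \mathcal E(\theta), \qquad \mathcal E = 2\cos\tfrac{k\theta}{2}\,R_\ell + 2\cos\tfrac{\ell\theta}{2}\,R_k + R_k R_\ell - R_{k+\ell}.
\]
The main term takes the extreme values $\pm 2$ exactly at the test points $\theta_m = \tfrac{2m\pi}{k-\ell}$, where its sign is $(-1)^m$.

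Next comes the counting. The test points lying in $(\pi/3, \pi/2]$ are those with $m$ an integer in $(\tfrac{k-\ell}{6}, \tfrac{k-\ell}{4}]$; writing $k - \ell = 12n + j$ and reducing this to a count of integers in an interval of length $\tfrac{k-\ell}{12}$ gives, case by case in $j \in \{0,2,4,6,8,10\}$, exactly $N_{k,\ell} + 1$ such points. Provided $|\mathcal E(\theta_m)| < 2$ at each of them, $F$ inherits the alternating signs $(-1)^m$, so between consecutive test points $F$ has a zero, yielding at least $(N_{k,\ell}+1) - 1 = N_{k,\ell}$ zeros in the open arc. The upper endpoint enters precisely when $4 \mid (k-\ell)$, i.e. $j \in \{0,4,8\}$: then $m = (k-\ell)/4$ is an integer and $\theta = \pi/2$ (the point $z = i$) is itself an extremum of the main term with value $\pm 2 \ne 0$, so it serves as the final test point and contributes the extra sign change recorded in those cases, while for $j \in \{2,6,10\}$ the main term vanishes at $z = i$ and no such endpoint datum is available.

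The main obstacle is the bound $|\mathcal E(\theta_m)| < 2$, and the difficulty is concentrated entirely at the test point nearest the corner $\rho$. At $z = i$ and throughout the interior the remainders $R_w$ decay (the nearest excluded term $(1,-1)$ has $|z-1| = \sqrt{2 - 2\cos\theta} > 1$), so $\mathcal E$ is small and the sign is clear. But the smallest test point satisfies $\theta_m - \tfrac{\pi}{3} = O\bigl(\tfrac{1}{k-\ell}\bigr)$, and there $|z-1| \to 1$, so $|z-1|^{-\ell}$ need not be small---indeed when $\ell$ is small relative to $k$ it is bounded below by a constant, and $\rho$ is an order-three elliptic point at which two further terms attain modulus one. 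Thus the crude expansion breaks down exactly where we still need the final sign change. To control $F$ there I would replace the four-term main part by the refined approximation of $E_w$ near $\rho$ in terms of the Jacobi theta function, absorbing these additional unit-modulus terms into an explicit main term; this is the genuinely delicate step, whereas the product-to-sum reduction and the residue-by-residue counting are routine once it is in place.
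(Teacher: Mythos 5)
Your overall architecture---passing to the real function $F = F_kF_\ell - F_{k+\ell}$, using the product-to-sum identity to extract $2\cos(\tfrac{(k-\ell)\theta}{2})$, sampling at $\theta_m = \tfrac{2m\pi}{k-\ell}$, and counting the integers $m$ with $\theta_m \in (\tfrac{\pi}{3}, \tfrac{\pi}{2}]$ case by case in $j$---is exactly the paper's. But the step you defer is the entire content of the proposition, and the tool you propose for it is not available where you need it. The paper's theta-function approximation (Theorem \ref{thm:EisensteinApproxTheorem}) refines the Eisenstein series only in the regime $k^{2/5} < y \ll k^{2/3}$, i.e.\ far up the side $x = 1/2$; it is used for the zeros counted by $B_{k,\ell}$, not $A_{k,\ell}$. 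Near the corner $\rho$ one has $y \approx \tfrac{\sqrt 3}{2}$, squarely in the regime $y \leq k^{2/5}$, where the theorem's conclusion \emph{is} the four-term expansion $G_k(z) = 1 + \tfrac{z^k}{(z-1)^k} + \tfrac{z^k}{(z+1)^k} + O(\exp(-k^{1/6}))$ that you are trying to improve upon. There is no refined theta main term near $\rho$, so your plan stalls at exactly the point you label the genuinely delicate step. Moreover, no absolute-value bound of the form $|\mathcal E(\theta_m)| < 2$ can close the argument: at the sample point nearest $\pi/3$, when $k \gg \ell$ one has $(2\sin\tfrac{\theta_m}{2})^{-\ell} = \exp(-c\ell/(k-\ell)) \to 1$ and also $(2\sin\tfrac{\theta_m}{2})^{-k}$ bounded below, so the two terms $2\cos(\tfrac{k\theta_m}{2})(2i\sin\tfrac{\theta_m}{2})^{-\ell}$ and $2\cos(\tfrac{\ell\theta_m}{2})(2i\sin\tfrac{\theta_m}{2})^{-k}$ sitting inside your $\mathcal E$ can have combined modulus close to $4$.

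What rescues the argument in the paper is sign information, not size. Lemma \ref{lemma:FkapproxOnArc} keeps those two $c^2+d^2=2$ contributions inside the main term $M_{k,\ell}$, so that the genuine error satisfies $|\mathcal E_{k,\ell}| \leq 0.091$ uniformly; then at the sample points the relation $\cos(\tfrac{k\theta_m}{2}) = (-1)^m\cos(\tfrac{\ell\theta_m}{2})$ gives $(-1)^m M_{k,\ell}(\theta_m) = 2\bigl[1 + \cos(\tfrac{\ell\theta_m}{2})\{(2i\sin\tfrac{\theta_m}{2})^{-\ell} + (-1)^m(2i\sin\tfrac{\theta_m}{2})^{-k}\}\bigr]$, and Proposition \ref{prop:Mksamplepoints} runs a case analysis on $\ell \bmod 6$ (using Lemma \ref{lemma:TrigEasyEstimates} and the monotonicity in Lemma \ref{lemma:sinMonotonic}) to show this is at least $0.31$: sometimes the dangerous terms reinforce the leading $\pm 2$, and otherwise monotonicity caps them at $\tfrac18$ or $\tfrac{1}{\sqrt2}\cdot(1.193)$. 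Without this (or an equivalent) joint sign-and-size analysis, your proof is incomplete. A smaller inaccuracy: your structural explanation of the count via the endpoint $z = i$ contradicts your own (correct) tally---$j=0$ has $\theta = \pi/2$ as a sample point yet lies in the smaller-count cases $\{0,2,6\}$, while $j=10$ lacks it yet lies in the larger-count cases $\{4,8,10\}$; the dichotomy comes from the number of integers in $(2n + \tfrac j6,\, 3n + \tfrac j4]$, not from whether the right endpoint is attained.
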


\begin{myprop}
\label{prop:BklLowerBoundAsymptotic}
 Suppose $k \geq \ell \geq L$ where $L$ is large.  Then $B_{k,\ell} \geq T_{k,\ell}$, where 
% $\Delta_{k, \ell}$ has at least $\lfloor \ell/6 \rfloor -1$ zeros on $x= 1/2$ if $\ell \equiv 2, 4 \pmod{6}$, and at least $\lfloor \ell/6] - 2$ zeros on $x=1/2$ if $\ell \equiv 0 \pmod{6}$.  That is, for $\ell$ large,
  \begin{equation}
 T_{k,\ell} = 
  \begin{cases} 
   \lfloor \ell/6 \rfloor - 1, \qquad l\equiv 2, 4 \pmod{6} \\
   \lfloor \ell/6 \rfloor - 2, \qquad l \equiv 0 \pmod{6}.
   \end{cases}
  \end{equation}
\end{myprop}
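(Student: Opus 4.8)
The plan is to prove the lower bound by parametrizing the side $x=\tfrac12$, $y>\tfrac{\sqrt3}{2}$ and counting sign changes of the real-valued function $y\mapsto \Delta_{k,\ell}\left(\tfrac12+iy\right)$, exactly in the spirit of Rankin and Swinnerton-Dyer \cite{RS} and of the arc analysis behind Proposition~\ref{prop:Akllowerbound}. Writing $z=\tfrac12+iy$, the lattice vectors nearest the cusp are $(c,d)=(0,\pm1),(\pm1,0),(\pm1,\mp1)$, and since $\tfrac12\pm iy=\bigl(\tfrac14+y^2\bigr)^{1/2}e^{\pm i\alpha}$ with $\alpha=\arctan(2y)$, isolating these vectors and bounding the rest gives
\[
E_k\left(\tfrac12+iy\right)=1+2\left(\tfrac14+y^2\right)^{-k/2}\cos(k\alpha)+\mathrm{Err}_k(y),
\]
where $\mathrm{Err}_k(y)$ collects the vectors with $c\geq 2$ and is smaller than the displayed oscillatory term by a factor that is uniformly small once $\ell\geq L$. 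As $y$ runs over $\bigl(\tfrac{\sqrt3}{2},\infty\bigr)$ the angle $\alpha$ runs over $\bigl(\tfrac{\pi}{3},\tfrac{\pi}{2}\bigr)$, so $\cos(k\alpha)$ is the source of the oscillation. This refines the estimate $B_{k,\ell}=\tfrac{\ell}{6}+O(1)$ of Proposition~\ref{prop:CrudeEquality}.

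The key algebraic step is to insert these expansions into $\Delta_{k,\ell}=E_kE_\ell-E_{k+\ell}$ and exploit a cancellation. Writing $f_k(y)=2\bigl(\tfrac14+y^2\bigr)^{-k/2}\cos(k\alpha)$, the product-to-sum identity gives $f_kf_\ell=2\bigl(\tfrac14+y^2\bigr)^{-(k+\ell)/2}[\cos((k+\ell)\alpha)+\cos((k-\ell)\alpha)]$, and the frequency $(k+\ell)\alpha$ exactly cancels the term $f_{k+\ell}$ from $E_{k+\ell}$, leaving $f_kf_\ell-f_{k+\ell}=2\bigl(\tfrac14+y^2\bigr)^{-(k+\ell)/2}\cos((k-\ell)\alpha)$. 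Hence
\[
\Delta_{k,\ell}\left(\tfrac12+iy\right)=2\left(\tfrac14+y^2\right)^{-\ell/2}\cos(\ell\alpha)+2\left(\tfrac14+y^2\right)^{-k/2}\cos(k\alpha)+(\text{smaller}),
\]
and since $k\geq\ell$ the first term dominates. Multiplying by the positive factor $\bigl(\tfrac14+y^2\bigr)^{\ell/2}$ (which does not move zeros), $\Delta_{k,\ell}$ becomes $2\cos(\ell\alpha)$ plus terms decaying like $\bigl(\tfrac14+y^2\bigr)^{-(k-\ell)/2}$ and like higher powers from $c\geq2$. The borderline case $k=\ell$ is handled by the same computation: there the leftover simplifies to $f_\ell^2-f_{2\ell}=2\bigl(\tfrac14+y^2\bigr)^{-\ell}$, which is still dominated by $2f_\ell$ at the extrema of $\cos(\ell\alpha)$.

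To count zeros I would evaluate at the test points $y_m$ where $\ell\alpha\in\pi\mz$, i.e. where $\cos(\ell\alpha)=\pm1$; there the main term attains its full amplitude $2\bigl(\tfrac14+y^2\bigr)^{-\ell/2}$ and so strictly dominates the error once $\ell\geq L$, while $\cos(\ell\alpha)$ alternates sign along these points. Consecutive test points of opposite sign therefore force a genuine zero of $\Delta_{k,\ell}$ between them, so $B_{k,\ell}$ is at least the number of zeros of $\cos(\ell\alpha)$ in $\alpha\in\bigl(\tfrac{\pi}{3},\tfrac{\pi}{2}\bigr)$, which is the number of odd integers in $\bigl(\tfrac{2\ell}{3},\ell\bigr)$, namely $\tfrac{\ell}{6}+O(1)$. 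Resolving the $O(1)$ by an exact count of these odd integers, and discarding a bounded number of sign changes nearest the corner where the approximation is least reliable (together with the open-endpoint adjustment), produces the stated values $\lfloor\ell/6\rfloor-1$ for $\ell\equiv2,4\pmod6$ and $\lfloor\ell/6\rfloor-2$ for $\ell\equiv0\pmod6$.

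The main obstacle is the corner region $y\to\tfrac{\sqrt3}{2}$, equivalently $\alpha\to\tfrac{\pi}{3}$, where the three vectors $(0,1),(1,0),(1,-1)$ all have modulus tending to $1$ and the crude ``$1+f_k$'' splitting breaks down. It is precisely here that the uniform approximation of the Eisenstein series by a Jacobi theta function is needed to keep $\mathrm{Err}_k$ under control, and it is here that the exact constant ($-1$ versus $-2$) and the dependence on $\ell\bmod6$ are pinned down, since the value $\cos(\ell\pi/3)$ at the corner governs how many of the outermost test points survive the error estimate. A secondary difficulty is establishing the smallness of $\mathrm{Err}$ relative to the main term \emph{simultaneously} for $E_k$, $E_\ell$, and $E_{k+\ell}$, which is what forces the hypothesis that $\ell$ be large.
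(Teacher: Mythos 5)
Your skeleton (test points where $\cos(\ell\alpha)=\pm1$, sign alternation, and the product-to-sum cancellation of the $(k+\ell)$-frequency) matches the paper's strategy, but there are two genuine gaps, and the first is fatal to the dominance claim on which everything rests. Your assertion that, after isolating the vectors $(0,\pm1),(\pm1,0),(\pm1,\mp1)$, the remainder is uniformly smaller than the oscillatory term $2(\tfrac14+y^2)^{-\ell/2}\cos(\ell\alpha)$ fails once $y\gtrsim\sqrt{\ell}$: the neglected terms with $c=1$, $|d|\geq 2$ have relative size $\exp(-\ell(d^2+d)/(2y^2))$, so for $y\gg\sqrt{\ell}$ about $y/\sqrt{\ell}$ of them are each of size comparable to the main term and their sum swamps $2\cos(\ell\alpha)$. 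This matters because your test points run all the way up to $y\asymp\ell$ (the point nearest $\alpha=\pi/2$), and the exact constants in $T_{k,\ell}$ leave no room to discard any of them: with $Q_\ell$ test points one needs all $Q_\ell-1$ consecutive sign changes, so "discarding a bounded number" only recovers Proposition \ref{prop:CrudeEquality}, not the stated result. This large-$y$ range is precisely where the paper needs the Jacobi theta function (transition range $y\asymp\sqrt{\ell}$; Theorem \ref{thm:EisensteinApproxTheorem} and Proposition \ref{prop:Hksamplepoints}, including the numerical fact that the sets where $\Phi_0(r)<2$ and $\Phi_1(r)<1$ overlap) and the Fourier expansion (for $y\gg\sqrt{\ell}$, where the true amplitude is $\asymp r^{1/2}e^{\pi/(4r)}$, not $2$). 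You have the geography backwards: you invoke the theta function for the corner $y\to\sqrt{3}/2$, but near the corner the lattice-point expansion has exponentially small error and no theta function is needed there. A further wrinkle you do not address is that $k$ and $\ell$ enter their transition ranges at different heights when $k\gg\ell$; the paper sidesteps this by the rearrangement \eqref{eq:DeltaklapproximateformulaOnSide}, where the factor $|z|^k$ amplifies $H_\ell$ and the trivial bound of Corollary \ref{coro:Hktrivialbound} absorbs $H_k/|z|^{k-\ell}$ and $H_kH_\ell-H_{k+\ell}$.

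Second, near the corner the obstacle is not the error term but your claim that "since $k\geq\ell$ the first term dominates": when $|z|\to1$ the amplitudes $2|z|^{-\ell}$, $2|z|^{-(k-\ell)}$, $2|z|^{-k}$ are all comparable, so at a sample point one must lower-bound (in the paper's notation) $(-1)^mP_{k,\ell}(\theta_m)=1+\cos(D\theta_m)\,|z|^{-D}\bigl(1+(-1)^m|z|^{-\ell}\bigr)$ with $D=k-\ell$, where the bracket can be close to $2$ and $\cos(D\theta_m)$ can be negative. Showing this stays $\geq 0.17$ is exactly the content of the paper's Proposition \ref{prop:PkellLowerBound}, which requires the case analysis on $D\bmod 6$ together with the monotonicity of $(2\cos(\tfrac{\pi}{3}+\tfrac{\pi}{2X}))^X$ (Lemma \ref{lemma:cosMonotonic}); your appeal to the value of $\cos(\ell\pi/3)$ at the corner does not supply this argument, and without it the sign at the sample points nearest $\pi/3$ is not established.
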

Propositions \ref{prop:Akllowerbound} and \ref{prop:BklLowerBoundAsymptotic}, together with \eqref{eq:valenceFormula}, immediately imply Proposition \ref{prop:CrudeEquality}.

In a handful of cases, the lower bounds in Propositions \ref{prop:Akllowerbound} and \ref{prop:BklLowerBoundAsymptotic} may be elevated to equalities.  If $j\in \{4, 10\}$ and $\ell \equiv 2 \pmod{6}$, then Propositions \ref{prop:Akllowerbound} and \ref{prop:BklLowerBoundAsymptotic} give $A_{k,\ell} + B_{k,\ell} \geq n + \lfloor \ell/6 \rfloor - 1$.  On the other hand, the valence formula \eqref{eq:valenceFormula} gives
\begin{equation}
 A_{k,\ell} + B_{k,\ell} \leq \tfrac{k-\ell}{12} + \tfrac{2\ell}{12} - \tfrac12 \nu_i(\Delta_{k,\ell}) - \tfrac13 \nu_{\rho}(\Delta_{k,\ell}) - 1.
\end{equation}
% which simplifies as
% \begin{equation}
%  A_{k,\ell} + B_{k,\ell} \leq n + \tfrac{j}{12} + \tfrac{\ell}{6} - \tfrac12 \nu_i(\Delta_{k,\ell}) - \tfrac13 \nu_{\rho}(\Delta_{k,\ell}) - 1,
% \end{equation}
If $j=4$ and $\ell \equiv 2 \pmod{6}$, this simplifies as
\begin{equation}
 A_{k,\ell} + B_{k,\ell} \leq n + \lfloor \ell/6 \rfloor + \tfrac{2}{3}  - \tfrac12 \nu_i(\Delta_{k,\ell}) - \tfrac13 \nu_{\rho}(\Delta_{k,\ell}) - 1.
\end{equation}
Furthermore, the valence formula \eqref{eq:valenceFormula} implies $\nu_i(\Delta_{k,\ell}) \equiv 0 \pmod{2}$ and $\nu_{\rho}(\Delta_{k,\ell}) \equiv 2 \pmod{3}$, and thus $A_{k,\ell} + B_{k,\ell} \leq n + \lfloor \ell/6 \rfloor - 1$.  The only way this upper bound is consistent with the lower bounds is if $A_{k,\ell} = n$ and $B_{k,\ell} = \lfloor \ell/6 \rfloor -1$.  A similar argument works for $j=10$.  

However, in many cases, Propositions \ref{prop:Akllowerbound} and \ref{prop:BklLowerBoundAsymptotic} are off by one or two zeros, as will be seen with the aid of Tables \ref{table:aklChart} and \ref{table:bklChart}.
\begin{table}[]
\centering
\caption{Values of $A_{k,\ell}$. %where $k$ is the value at the column head and $\ell$ is the value at the row head. 
Entries are underlined to emphasize the behavior %of $A_{k,\ell}$ 
when $k-\ell \equiv 0 \pmod{12}$ as $k$ is increased and $\ell$ is held constant.}
\label{table:aklChart}
\begin{tabular}{l|l|l|l|l|l|l|l|l|l|l|l|l|l|l|l}
$\ell \thinspace \diagdown \thinspace k$ & 56 & 58 & 60 & 62 & 64 & 66 & 68 & 70 & 72 & 74 & 76 & 78 & 80 & 82 & 84 \\
\hline
20  & \underline{3}  & 3  & 3  & 3  & 4  & 3  & \underline{4}  & 4  & 4  & 4  & 5  & 4  & \underline{5}  & 5  & 5  \\
\hline
22  & 2  & \underline{3}  & 2  & 3  & 3  & 3  & 3  & \underline{4}  & 3  & 4  & 4  & 4  & 4  & \underline{4}  & 4  \\
\hline
24  & 2  & 2  & \underline{3}  & 2  & 3  & 3  & 3  & 3  & \underline{4}  & 3  & 4  & 4  & 4  & 4  & \underline{5} 
\end{tabular}
\end{table}

Although it is not easy to directly compare Table \ref{table:aklChart} to Proposition \ref{prop:Akllowerbound}, one \emph{can} certainly read off the fact that Propositions \ref{prop:Akllowerbound} and \ref{prop:BklLowerBoundAsymptotic} are not equalities in all cases.  The reason is that if $\ell$ is held fixed and $k$ is increased by $12$, then $\Delta_{k+12, \ell}$ has one more zero than $\Delta_{k,\ell}$, and the naive expectation is that $A_{k+12,\ell} = A_{k,\ell} + 1$, and $B_{k+12,\ell} = B_{k,\ell}$.  However, the tables give that $A_{82,22} = A_{70,22}$, and $B_{82,22} = B_{70,22} + 1$, for instance.

\begin{table}[]
\centering
\caption{Values of $B_{k,\ell}$. %where $k$ is the value at the column head and $\ell$ is the value at the row head. 
Entries areunderlined to emphasize the behavior %of $B_{k,\ell}$ 
when $k-\ell \equiv 0 \pmod{12}$ as $k$ is increased and $\ell$ is held constant.}
\label{table:bklChart}
\begin{tabular}{l|l|l|l|l|l|l|l|l|l|l|l|l|l|l|l}%{llllllllllllllll}
$\ell \thinspace \diagdown \thinspace k$ & 56 & 58 & 60 & 62 & 64 & 66 & 68 & 70 & 72 & 74 & 76 & 78 & 80 & 82 & 84 \\
\hline
20  & \underline{2}  & 2  & 2  & 2  & 2  & 2  & \underline{2}  & 2  & 2  & 2  & 2  & 2  & \underline{2}  & 2  & 2  \\
\hline
22  & 3  & \underline{2}  & 3  & 3  & 2  & 3  & 3  & \underline{2}  & 3  & 3  & 2  & 3  & 3  & \underline{3}  & 3  \\
\hline
24  & 3  & 3  & \underline{3}  & 3  & 3  & 3  & 3  & 3  & \underline{3}  & 3  & 3  & 3  & 3  & 3  & \underline{3} 
\end{tabular}
\end{table}

We gather a refined bound with the following.
\begin{mytheo}
  \label{thm:Akvalue}
 Let $k - \ell = 12n + j$ with $\ell \geq 14$, $n \geq 1$ and $0 \leq j <12$.  Then $A_{k,\ell} \geq N_{k,\ell}'$ where $N_{k,\ell}'$ is given by the following table of values:
 \begin{equation}
 \begin{array}{c|c|c|c}
j \thinspace \diagdown \thinspace \ell \mymod{6} & 0 & 2 & 4 \\
\hline
0 & n & n & n-1 \\
\hline
2 & n-1 & n & n-1  \\
\hline
4 & n & n & n \\
\hline
6 & n & n & n-1  \\
\hline
8 & n & n+1 & n  \\
\hline
10 & n & n & n
\end{array}.
\end{equation}
\end{mytheo}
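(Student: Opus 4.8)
The plan is to reduce everything to counting sign changes of a real-valued function on the arc. Parametrize the arc by $z = e^{i\theta}$, $\theta \in (\tfrac{\pi}{3},\tfrac{\pi}{2})$, and consider
\[
F_{k,\ell}(\theta) = e^{i(k+\ell)\theta/2}\,\Delta_{k,\ell}(e^{i\theta}).
\]
First I would check that $F_{k,\ell}$ is real. Since $\Delta_{k,\ell}$ has weight $k+\ell$ and real Fourier coefficients, we have $\overline{\Delta_{k,\ell}(z)} = \Delta_{k,\ell}(-\bar z)$; and on $|z|=1$ one has $-\bar z = -1/z$, so modularity gives $\overline{\Delta_{k,\ell}(e^{i\theta})} = e^{i(k+\ell)\theta}\Delta_{k,\ell}(e^{i\theta})$, whence $\overline{F_{k,\ell}} = F_{k,\ell}$. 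Every sign change of $F_{k,\ell}$ on the open arc yields at least one zero of $\Delta_{k,\ell}$ there, so the lower bound $A_{k,\ell}\ge N'_{k,\ell}$ reduces to producing $N'_{k,\ell}$ sign changes.

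Next I would set up the main-term approximation. Retaining only the two dominant lattice terms $(c,d)=(0,1)$ and $(1,0)$ in \eqref{eq:EkczdDefinition} gives $e^{im\theta/2}E_m(e^{i\theta}) = 2\cos(m\theta/2) + (\text{error})$ for each weight $m$. Multiplying the expansions for $E_k$ and $E_\ell$, subtracting that for $E_{k+\ell}$, and applying $4\cos(\tfrac{k\theta}{2})\cos(\tfrac{\ell\theta}{2}) = 2\cos(\tfrac{(k+\ell)\theta}{2}) + 2\cos(\tfrac{(k-\ell)\theta}{2})$ collapses the leading behavior to
\[
F_{k,\ell}(\theta) = 2\cos\!\big(\tfrac{(k-\ell)\theta}{2}\big) + (\text{error}).
\]
Counting the odd integers in $\big(\tfrac{k-\ell}{3},\tfrac{k-\ell}{2}\big)$ — the zeros of the cosine in the open arc — and using that the cosine attains $\pm 1$ between consecutive zeros (where the error is dominated) recovers the crude count and, after bookkeeping in terms of $j = (k-\ell)\bmod 12$, the bound $N_{k,\ell}$ of Proposition~\ref{prop:Akllowerbound}. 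The $j$-dependence of $N'_{k,\ell}$ that is uniform in $\ell$ comes from this bulk count together with the easier endpoint at $\theta = \tfrac{\pi}{2}$ (the point $i$), where only two lattice terms dominate and the sign is pinned down directly.

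The refinement from $N_{k,\ell}$ to $N'_{k,\ell}$, and the reason for the $\ell \bmod 6$ dependence, must come from the corner $\rho = e^{i\pi/3}$, which I expect to be the main obstacle. At $z=\rho$ the two-term approximation fails because a third lattice term, $(c,d)=(1,-1)$, also has $|cz+d|=1$, so the cosine error is not small in a neighborhood of $\theta = \tfrac{\pi}{3}$. The plan is to replace the crude error estimate there by the Jacobi theta approximation advertised in the abstract, which packages the near-corner contribution and lets me determine the \emph{sign} of $F_{k,\ell}$ just inside $\theta = \tfrac{\pi}{3}$ (and, in degenerate cases, the order of vanishing of $\Delta_{k,\ell}$ at $\rho$). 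Because $(k+\ell)\bmod 12$ — equivalently the pair $(j,\ell\bmod 6)$, since $k+\ell = 12n + j + 2\ell$ — governs $v_\rho(\Delta_{k,\ell})$ through the valence formula \eqref{eq:valenceFormula}, this near-corner sign alternates with $\ell\bmod 6$, and in precisely the cases recorded in the table it forces one extra sign change beyond the bulk count.

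Finally I would assemble the pieces: combine the bulk sign-change count with the corner sign determination and the $i$-endpoint analysis, tabulate the outcome in each residue class $(j,\ell\bmod 6)$, and read off $N'_{k,\ell}$. The technical heart is the uniform control of the theta approximation in a shrinking neighborhood of $\rho$ as $k,\ell\to\infty$, ensuring the near-corner sign is unambiguous; once that is in hand, the table of values follows by a finite case check.
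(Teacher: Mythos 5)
Your overall framing matches the paper's: restrict to the arc, pass to the real-valued function $F_kF_\ell - F_{k+\ell}$, count sign changes at the sample points $\theta_m = \tfrac{2\pi m}{k-\ell}$ where the main term $2\cos(\tfrac{(k-\ell)\theta}{2})$ equals $\pm 2$, and then hunt for one extra sign change near the corner $\rho$ in the residue classes where the table exceeds the crude count. However, your plan has a genuine gap at precisely the step that carries the content of this theorem. You propose to control the neighborhood of $\rho$ with the Jacobi theta approximation, but that approximation (Theorem \ref{thm:EisensteinApproxTheorem}, equation \eqref{eq:EisensteinApproxTheoremLargey}) is valid only for $y > k^{2/5}$ and is used in the paper exclusively for the vertical sides $x=\pm 1/2$ in the transition range $y \asymp \sqrt{k}$; on the arc one has $y \le 1$, so the theta regime never occurs there and "uniform control of the theta approximation in a shrinking neighborhood of $\rho$" is not a meaningful step. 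What is actually needed near $\rho$ is elementary: retain the $c^2+d^2=2$ lattice terms, i.e.\ work with
\begin{equation*}
M_{k,\ell}(\theta) = 2\cos(\tfrac{(k-\ell)\theta}{2}) + 2\cos(\tfrac{k\theta}{2})\,(2i\sin(\tfrac{\theta}{2}))^{-\ell} + 2\cos(\tfrac{\ell\theta}{2})\,(2i\sin(\tfrac{\theta}{2}))^{-k},
\end{equation*}
since $(2\sin(\tfrac{\theta}{2}))^{-1} = 1$ at $\theta=\pi/3$ and these secondary terms are of size comparable to the main term there.

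Two consequences of this omission. First, your bulk count is not complete as stated: for sample points $\theta_m$ within distance $O(1/\ell)$ of $\pi/3$, the secondary terms are not dominated by the cosine, so "the error is dominated" fails; the paper's Proposition \ref{prop:Mksamplepoints} handles exactly this by a case analysis on $\ell \bmod 6$ (via $\cos(\tfrac{\ell\pi}{6}+\ell x)\,i^{-\ell}$), and this — not the valence formula, as you suggest — is where the $\ell \bmod 6$ dependence genuinely enters. Second, the extra zero is produced in the paper by evaluating $M_{k,\ell}$ at $\pi/3$ itself (getting values $\pm 6$ or $\pm 3$) and comparing with the sign at the nearest sample point; in the degenerate cases $j\in\{0,6\}$, $\ell \equiv 2 \pmod 6$, where $\Delta_{k,\ell}(\rho)=0$ forces $M_{k,\ell}(\pi/3)=0$, one must instead evaluate at an intermediate point $\phi = \tfrac{\pi}{3} + \tfrac{\pi}{4(k-\ell)}$ strictly between $\pi/3$ and the first sample point and show $|M_{k,\ell}(\phi) - 2(-1)^{j/6}\cos(\tfrac{\pi}{8})|$ is small. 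Your appeal to the order of vanishing at $\rho$ does not by itself produce a sign at any usable point, so the case table cannot be completed from your sketch.
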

To compare the lower bound in Theorem \ref{thm:Akvalue} with some data, see Table \ref{table:aklChartdifference}.  %Observe that  $A_{70,22} = A_{82,22} = 4$, while $N_{70,22}' = 3$, and $N_{82,22}' = 4$, which indicates that Theorem \ref{thm:Akvalue} is not always an equality.  Rather, 
What is apparently true from the table (and indeed is proven in Corollary \ref{coro:allzerosonboundary} below) is that for $k$ sufficiently large compared to $\ell$, $A_{k,\ell} = N_{k,\ell}'$. Note that for $\ell=24$, this stabilization in the difference between $A_{k,\ell}$ and $N_{k,\ell}'$ has already occured by $k=56$, while for $\ell=22$, such a stabilization does not occur until $\ell=78$. 

\begin{table}[]
\centering
\caption{Values of $A_{k,\ell}-N_{k,\ell}'$.  %where $k$ is the value at the column head and $\ell$ is the value at the row head. 
Entries are underlined to emphasize the behavior %of $A_{k,\ell}-N_{k,\ell}'$
when $k-\ell \equiv 0 \pmod{12}$ as $k$ is increased and $\ell$ is held constant.}
\label{table:aklChartdifference}
\begin{tabular}{l|l|l|l|l|l|l|l|l|l|l|l|l|l|l|l}%{llllllllllllllll}
$\ell \thinspace \diagdown \thinspace k$ & 56 & 58 & 60 & 62 & 64 & 66 & 68 & 70 & 72 & 74 & 76 & 78 & 80 & 82 & 84 \\
\hline
20  & \underline{0}  & 0  & 0  & 0  & 0  & 0  & \underline{0}  & 0  & 0  & 0  & 0  & 0  & \underline{0}  & 0  & 0  \\
\hline
22  & 0  & \underline{1}  & 0  & 0  & 1  & 0  & 0  & \underline{1}  & 0  & 0  & 1  & 0  & 0  & \underline{0}  & 0  \\
\hline
24  & 0  & 0  & \underline{0}  & 0  & 0  & 0  & 0  & 0  & \underline{0}  & 0  & 0  & 0  & 0  & 0  & \underline{0} 
\end{tabular}
\end{table}

The corresponding behavior of $B_{k,\ell}$ is more complicated to state.  
The difficulty is that $k$ needs to be large enough for the value of $B_{k,\ell}$ to stabilize; in the case of $A_{k,\ell}$ the lower bound in Theorem \ref{thm:Akvalue} is true, but not always sharp, except if $k$ is large.  The size of $k$ required turns out to depend on $k-\ell \pmod{6}$, and on $\ell \pmod{6}$.  Towards this end, we define the following function.
\begin{mydefi}
Let $k-\ell = 12n + j$, with $0 \leq j < 12$.
Set $\stabp_j(\ell) = \ell$, except for the following cases:
%For $\ell \equiv 2 \pmod{6}$, set $\stabp_{j}(\ell) = \ell$, for all $j$.  For $\ell \equiv 0 \pmod{6}$, set $\stabp_{j}(\ell) = \ell$ if $j\equiv 0, 4 \pmod{6}$, and 
\begin{equation}
\label{eq:stabpdef}
\stabp_{j}(\ell) = 
\begin{cases}
 \tfrac{\ell -1 + \sqrt{3 \ell^2 -1}}{2} \quad &\text{ if } \ell \equiv 0 \pmod{6}, \text{ and } j\equiv 2 \pmod{6},
\\
2 \ell , \quad &\text{ if } \ell \equiv 4 \pmod{6}, \text{ and } j \equiv 2 \pmod{6}, 
\\
\tfrac{4\ell -1 + \sqrt{12 \ell^2 -12 \ell + 1}}{2} \quad &\text{ if } \ell \equiv 4 \pmod{6}, \text{ and } j\equiv 0 \pmod{6}.
\end{cases}
\end{equation}
\end{mydefi}
Remark.  Here $\stabp$ stands for ``stablization point," and is intended to represent how large $k$ needs to be compared to $\ell$ for the values of $A_{k,\ell}$ and $B_{k,\ell}$ to stabilize to their limiting behaviors.

\begin{mytheo}
 \label{thm:Bkvalue}
 Let $k \geq \ell \geq L$, where $L$ is some large absolute (effective) constant.  Then $B_{k,\ell} \geq T_{k,\ell}'$, provided $k \geq \stabp_{j}(\ell)$, where
 \begin{equation}
  T_{k,\ell}' =  \begin{cases}
   \lfloor \ell/6 \rfloor - 1, \qquad &\ell \equiv 0 \mymod{6},  \\
   \lfloor \ell/6 \rfloor - 1, \qquad &\ell \equiv 2 \mymod{6}, \\
   \lfloor \ell/6 \rfloor , \qquad &\ell \equiv 4 \mymod{6}.
  \end{cases}
 \end{equation}
\end{mytheo}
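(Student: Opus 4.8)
The plan is to locate the zeros of $\Delta_{k,\ell}$ on the side directly, by counting sign changes of a real-valued function. Since $\Delta_{k,\ell}$ has real Fourier coefficients and satisfies $\Delta_{k,\ell}(z+1) = \Delta_{k,\ell}(z)$, for $z = \tfrac12 + iy$ we have $\overline{\Delta_{k,\ell}(z)} = \Delta_{k,\ell}(-\bar z) = \Delta_{k,\ell}(z-1) = \Delta_{k,\ell}(z)$, so $y \mapsto \Delta_{k,\ell}(\tfrac12 + iy)$ is real-valued on $(\tfrac{\sqrt3}{2}, \infty)$. Consequently $B_{k,\ell}$ is bounded below by the number of sign changes of this function, and it suffices to exhibit $T_{k,\ell}'$ of them.

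To detect these sign changes I would use the approximation of $E_k$ near the boundary by a Jacobi theta function, which is the central analytic input of the paper. After dividing $\Delta_{k,\ell}(\tfrac12+iy)$ by a suitable strictly positive normalizing factor, this approximation writes the normalized function as a short theta-type oscillatory main term $M_{k,\ell}(y)$, whose sign pattern is explicit, plus an error that is uniformly smaller than the gaps of $M_{k,\ell}$ away from the endpoints. I would then choose sample points $\tfrac{\sqrt3}{2} < y_1 < y_2 < \cdots$ at which $M_{k,\ell}$ takes alternating signs with magnitude exceeding the error bound; each consecutive pair forces a genuine sign change of $\Delta_{k,\ell}$, and counting them reproduces the baseline bound of Proposition \ref{prop:BklLowerBoundAsymptotic}. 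This is the argument underlying that proposition, reorganized so that the dependence on $\ell \bmod 6$ and $j \bmod 6$ is visible: these residues fix $\lfloor \ell/6\rfloor$ and, through the constraint $\nu_\rho(\Delta_{k,\ell}) \equiv 2 \pmod 3$ forced by \eqref{eq:valenceFormula}, the order of the trivial zero at $\rho$.

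The content of Theorem \ref{thm:Bkvalue} beyond Proposition \ref{prop:BklLowerBoundAsymptotic} is a single additional sign change in the cases $\ell \equiv 0, 4 \pmod 6$, and the work is to capture it near the corner $\rho = \tfrac12 + i\tfrac{\sqrt3}{2}$. The opposite end behaves stably: as $y \to \infty$, $\Delta_{k,\ell}(\tfrac12+iy) \sim -a_1 e^{-2\pi y}$ with $a_1 = \gamma_k + \gamma_\ell - \gamma_{k+\ell}$, where $\gamma_m = 2/\zeta(1-m)$; since $|\gamma_m|$ is decreasing for $m \geq 14$ and $\ell \le k$, the term $\gamma_\ell$ dominates, so the sign at the cusp depends only on $\ell$ and needs no threshold. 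Near $\rho$, by contrast, several lattice terms $(cz+d)^{-k}$ are of comparable size, and the sign of $\Delta_{k,\ell}$ just above $\rho$ is governed by the competition between the two dominant ones. When $k$ is large relative to $\ell$ the weight-$k$ factor tips this competition decisively, pinning the sign of $\Delta_{k,\ell}$ at a sample point $y_0$ very close to $\tfrac{\sqrt3}{2}$ and producing one more sign change than the interior count guarantees. Solving for the value of $k$ at which the two competing contributions exchange dominance is precisely what produces the closed-form thresholds $\stabp_j(\ell)$ of \eqref{eq:stabpdef}; the radicals $\sqrt{3\ell^2-1}$ and $\sqrt{12\ell^2 - 12\ell + 1}$ arise from evaluating $|cz+d|^2$ at and near $\rho$, where $y = \tfrac{\sqrt3}{2}$ supplies the factors of $3$.

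I expect this endpoint analysis to be the main obstacle. Away from $\rho$ the main term $M_{k,\ell}$ is comfortably large, but as $y \downarrow \tfrac{\sqrt3}{2}$ its leading term degenerates and becomes comparable both to the error and to the next lattice contribution, so the crude approximation no longer fixes the sign. To extract the extra zero I would retain a two- or three-term expansion there, show that for $k \ge \stabp_j(\ell)$ the surviving term has a definite sign, and verify the resulting inequality uniformly in $\ell$ (whence the requirement that $\ell \geq L$ be large); separately, for the remaining residue combinations, where $\stabp_j(\ell) = \ell$, I would confirm that this extra sign change is already present for every admissible $k$. Assembling these case-by-case sign determinations with the interior count and the valence-formula bookkeeping yields $B_{k,\ell} \ge T_{k,\ell}'$ under the hypothesis $k \ge \stabp_j(\ell)$.
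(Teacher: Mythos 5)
Your skeleton matches the paper's at the top level: the baseline count of Proposition \ref{prop:BklLowerBoundAsymptotic} via alternating sample points $\theta_m = \frac{\pi m}{\ell}$, plus one extra sign change to be extracted near the corner $\rho$ when $\ell \equiv 0, 4 \pmod{6}$, with the thresholds $\stabp_j(\ell)$ emerging from a sign determination there. But the mechanism you propose for that extra zero is not the right one, and the idea that actually makes the corner analysis work is missing. The sign of the normalized function just above $\rho$ is not decided by a ``competition between the two dominant lattice terms,'' and the radicals in \eqref{eq:stabpdef} do not arise from evaluating $|cz+d|^2$ at $\rho$. The correct organizing principle is the order of the trivial zero of $\Delta_{k,\ell}$ at $\rho$, i.e.\ the residue of $k+\ell \bmod 6$. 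If $k+\ell \equiv 0 \pmod{6}$, the function is nonzero at the corner and one evaluates the trigonometric main term \eqref{eq:PkellDef} there directly, getting $P_{k,\ell}(\frac{\pi}{3}) = 3$ (for $\ell \equiv 0$) or $-1.5$ (for $\ell \equiv 4$); no threshold on $k$ appears. If $k+\ell \equiv 4 \pmod{6}$ there is a simple zero at $\rho$, and the sign just above the corner is governed by the first derivative, $P_{k,\ell}'(\frac{\pi}{3}) = \sqrt{3}\,(2\ell-k)$ or $\sqrt{3}\,(2k-\ell)$ according to the residues; the linear expression changing sign at $k=2\ell$ is exactly the threshold $\stabp_j(\ell) = 2\ell$. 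If $k+\ell \equiv 2 \pmod{6}$ there is a double zero, the first derivative vanishes identically, and the sign is that of the second derivative \eqref{eq:Pkdoubleprimeformula}, a quadratic polynomial in $k$ whose larger root is $\tfrac{\ell-1+\sqrt{3\ell^2-1}}{2}$, resp.\ $\tfrac{4\ell-1+\sqrt{12\ell^2-12\ell+1}}{2}$ --- that, and not any evaluation of $|cz+d|$, is where the radicals come from. In every case the extra sign change is obtained by comparing this corner sign with the value at the nearest interior sample point ($\theta_{2q+1}$ or $\theta_{2q+2}$), which has the opposite sign. Without the vanishing-order case division, your ``two- or three-term expansion'' has no determinate content: one cannot say which term survives or what inequality must be verified.

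There is also a technical gap you would hit even with the correct case division: pointwise error bounds do not differentiate. The approximation $|z|^{k+\ell}\Delta_{k,\ell} = 2|z|^k P_{k,\ell}(\theta) + O(\exp(-\ell^{1/6}))$ controls values, not derivatives, of the error, and in the simple- and double-zero cases it is precisely $P_{k,\ell}'(\frac{\pi}{3})$ or $P_{k,\ell}''(\frac{\pi}{3})$ that must beat the error. The paper resolves this using holomorphy of $G_k$ and Cauchy's integral formula in a fixed neighborhood of $\rho$ (Lemma \ref{lemma:HkDerivative}, Corollary \ref{coro:HderivativeEstimate}), which shows the error remains of size $k^a\, O(2^{-\ell/2})$ after $a=1,2$ differentiations along the side; one then checks that the derivative polynomials dominate this even when $k$ sits just past the threshold, which is one place the hypothesis $\ell \geq L$ is genuinely used. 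Your proposal needs this ingredient, or a substitute for it, before any expansion at the corner can be trusted.
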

As an example to illustrate Theorem \ref{thm:Bkvalue}, note that when $\ell = 22 \equiv 4 \pmod{6}$, Table \ref{table:bklChart} indicates that the value of $B_{k,\ell}$ perhaps stablizes at $k=82$ (but certainly not at $k=70$).  For comparison, we have $sp_{0}(22) = 81$, so indeed $82$ is where the value stablizes in this example.  As another example, numerical calculations indicate $B_{56,42} = 5$ and $B_{68,42} = 6$; here $\ell = 42 \equiv 0 \pmod{6}$, and $j=2$.  We have $\stabp_{2}(42) = 57$, which is in agreement with the theorem.

%\begin{myconj}
% Suppose that $\ell$ is fixed, and $k$ is large compared to $\ell$.  Then
% \begin{equation}
% \label{eq:Akincrement}
%  A_{k+12,\ell} - A_{k,\ell} = 1, 
%  \end{equation}
%  and
%\begin{equation}
%\label{eq:Bkincrement}
%  B_{k+12,\ell} = B_{k,\ell}.
% \end{equation}
%\end{myconj}

From Theorems \ref{thm:Akvalue} and \ref{thm:Bkvalue}, we deduce
\begin{mycoro}
\label{coro:AklPlusBklTableOfValues}
 Suppose $k \geq \ell \geq L$, $k-\ell =12n +j$ with $n\geq 1$, and $k \geq \stabp_{j}(\ell)$.  Then $A_{k,\ell} + B_{k,\ell}$ is bounded from below by the following table of values:
 \begin{equation}
 \label{eq:valenceTable}
\begin{array}{c|c|c|c}
j \diagdown \ell \mymod{6} & 0 & 2 & 4 \\
\hline
0 & n + \lfloor \frac{\ell}{6} \rfloor - 1 & n + \lfloor \frac{\ell}{6} \rfloor - 1 & n + \lfloor \frac{\ell}{6} \rfloor - 1 \\
2 & n + \lfloor \frac{\ell}{6} \rfloor - 2 & n + \lfloor \frac{\ell}{6} \rfloor - 1 & n + \lfloor \frac{\ell}{6} \rfloor - 1 \\
4 & n + \lfloor \frac{\ell}{6} \rfloor - 1 & n + \lfloor \frac{\ell}{6} \rfloor - 1 & n + \lfloor \frac{\ell}{6} \rfloor  \\
6 & n + \lfloor \frac{\ell}{6} \rfloor - 1 & n + \lfloor \frac{\ell}{6} \rfloor  -1 & n + \lfloor \frac{\ell}{6} \rfloor - 1 \\
8 & n + \lfloor \frac{\ell}{6} \rfloor - 1 & n + \lfloor \frac{\ell}{6} \rfloor  & n + \lfloor \frac{\ell}{6} \rfloor \\
10 & n + \lfloor \frac{\ell}{6} \rfloor - 1 & n + \lfloor \frac{\ell}{6} \rfloor - 1 & n + \lfloor \frac{\ell}{6} \rfloor.  \\
\end{array}
\end{equation}
\end{mycoro}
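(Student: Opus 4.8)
The plan is to deduce the corollary directly from Theorems \ref{thm:Akvalue} and \ref{thm:Bkvalue} by simply adding their two lower bounds, the only point of substance being that both theorems apply simultaneously under the stated hypotheses. First I would observe that since $L$ is large, in particular $L \geq 14$, the hypothesis $k \geq \ell \geq L$ of the corollary at once satisfies the domain condition $\ell \geq 14$ of Theorem \ref{thm:Akvalue} and the condition $\ell \geq L$ of Theorem \ref{thm:Bkvalue}. Moreover the assumption $k \geq \stabp_{j}(\ell)$ is precisely the threshold Theorem \ref{thm:Bkvalue} requires for $B_{k,\ell} \geq T_{k,\ell}'$, whereas Theorem \ref{thm:Akvalue} delivers $A_{k,\ell} \geq N_{k,\ell}'$ with no further restriction on the size of $k$. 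Hence $A_{k,\ell} + B_{k,\ell} \geq N_{k,\ell}' + T_{k,\ell}'$, and it remains to identify this sum with the entries of \eqref{eq:valenceTable}.

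The remaining work is the routine verification that $N_{k,\ell}' + T_{k,\ell}'$ equals the tabulated entry in each of the eighteen cases indexed by $j \in \{0,2,4,6,8,10\}$ and $\ell \bmod 6 \in \{0,2,4\}$. I would organize the check by the three columns $\ell \equiv 0, 2, 4 \pmod{6}$, in each substituting the constant value of $T_{k,\ell}'$ supplied by Theorem \ref{thm:Bkvalue} — namely $\lfloor \ell/6 \rfloor - 1$ in the columns $\ell \equiv 0, 2$ and $\lfloor \ell/6 \rfloor$ in the column $\ell \equiv 4$ — and reading the six values of $N_{k,\ell}'$ down the corresponding column of the table in Theorem \ref{thm:Akvalue}. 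For instance, in the column $\ell \equiv 4 \pmod{6}$ the values $N_{k,\ell}' \in \{n-1,\, n-1,\, n,\, n-1,\, n,\, n\}$ for $j=0,2,4,6,8,10$ combine with $T_{k,\ell}' = \lfloor \ell/6 \rfloor$ to reproduce exactly the last column of \eqref{eq:valenceTable}; the columns $\ell \equiv 0$ and $\ell \equiv 2$ are checked identically.

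Since each cell reduces to a single integer identity, there is no genuine obstacle to overcome: the corollary is a bookkeeping consequence of the two preceding theorems. The only matter requiring a moment of care is confirming that the hypotheses of Theorems \ref{thm:Akvalue} and \ref{thm:Bkvalue} are mutually consistent — in particular that no stabilization threshold is needed for the $A_{k,\ell}$ bound, so that the single combined assumption $k \geq \stabp_{j}(\ell)$ (together with $k \geq \ell \geq L$ and $n \geq 1$) is enough to invoke both lower bounds at once.
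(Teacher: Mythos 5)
Your proposal is correct and is exactly the paper's argument: the corollary is deduced by invoking Theorem \ref{thm:Akvalue} and Theorem \ref{thm:Bkvalue} simultaneously under the stated hypotheses (with $k \geq \stabp_j(\ell)$ only needed for the $B_{k,\ell}$ bound) and adding $N_{k,\ell}' + T_{k,\ell}'$ cell by cell, which reproduces \eqref{eq:valenceTable}. Your case-check of the eighteen entries matches the tables in both theorems, so nothing is missing.
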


One can check by a tedious but straightforward calculation that \eqref{eq:valenceTable} agrees with $\lfloor \frac{k+\ell}{12} \rfloor - 1$, except when $k+ \ell \equiv 2 \pmod{12}$, in which case it equals $\lfloor \frac{k+\ell}{12} \rfloor - 2$.  This is precisely the number of nontrivial zeros of a cusp form of weight $k+\ell$.  It may be worth mentioning that this calculation does not require $n \geq 1$; the only possible caveat is that when the table gives a negative value, that implies $k + \ell \leq 14$, which are degenerate situations that we are avoiding anyway.

Therefore, we deduce %Theorems \ref{thm:Akvalue} and \ref{thm:Bkvalue}, when valid, imply that all the zeros of $\Delta_{k,\ell}$ in $\mathcal{F}$ lie on the boundary, .  We formalize this with
\begin{mycoro}
\label{coro:allzerosonboundary}
  Suppose $k \geq \ell \geq L$, $k-\ell =12n +j$ with $n\geq 1$, and $k \geq \stabp_{j}(\ell)$.  Then all the zeros of $\Delta_{k,\ell}$ in the standard fundamental domain lie on the boundary.  Furthermore, the lower bounds on $A_{k,\ell}$ and $B_{k,\ell}$ given in Theorems \ref{thm:Akvalue} and \ref{thm:Bkvalue} are equalities, that is, $A_{k,\ell} = N_{k,\ell}'$ and $B_{k,\ell} = T_{k,\ell}'$.
\end{mycoro}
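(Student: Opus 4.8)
The plan is to deduce this corollary as a bookkeeping consequence of the bounds already assembled, by squeezing $A_{k,\ell}$ and $B_{k,\ell}$ between the lower bounds of Corollary \ref{coro:AklPlusBklTableOfValues} and an upper bound extracted from the valence formula, so that all the intervening inequalities collapse to equalities.

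First I would record the relevant upper bound. Recall that $A_{k,\ell}$ and $B_{k,\ell}$ count the nontrivial zeros of $\Delta_{k,\ell}$ lying, respectively, on the right half-arc $\{e^{i\theta}: \tfrac{\pi}{3} < \theta < \tfrac{\pi}{2}\}$ and on the right side $x=\tfrac12$, $y > \tfrac{\sqrt3}{2}$. Under the identifications $S\colon z\mapsto -1/z$, which folds the full arc onto its right half, and $T\colon z\mapsto z+1$, which identifies the two vertical sides, every $\Gamma$-orbit of a boundary zero other than the corners $i$ and $\rho$ has a unique representative among the points counted by $A_{k,\ell}+B_{k,\ell}$, and these orbits are disjoint. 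Since the boundary zeros are a subset of all the nontrivial zeros, the valence formula \eqref{eq:valenceFormula} yields
\[
A_{k,\ell} + B_{k,\ell} \le \frac{k+\ell}{12} - \tfrac12 v_i(\Delta_{k,\ell}) - \tfrac13 v_\rho(\Delta_{k,\ell}) - 1,
\]
the $-1$ accounting for the simple zero at $\infty$. Invoking the congruence constraints that the valence formula imposes on $v_i$ and $v_\rho$ (the restrictions on $v_i \bmod 2$ and $v_\rho \bmod 3$ already used in the discussion following Proposition \ref{prop:BklLowerBoundAsymptotic}), I would replace $v_i$ and $v_\rho$ by their minimal admissible values, turning this into the explicit bound $A_{k,\ell}+B_{k,\ell} \le \lfloor (k+\ell)/12\rfloor - 1$, with the sole exception that the right-hand side is $\lfloor (k+\ell)/12\rfloor - 2$ when $k+\ell\equiv 2\pmod{12}$. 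This is exactly the number of nontrivial zeros of a weight $k+\ell$ cusp form.

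Next I would combine this with Corollary \ref{coro:AklPlusBklTableOfValues}. As observed in the remark following that corollary, the table entry $N'_{k,\ell}+T'_{k,\ell}$ coincides in every case with $\lfloor (k+\ell)/12\rfloor - 1$ (respectively $-2$ when $k+\ell\equiv 2\pmod{12}$). Hence the lower bound $A_{k,\ell}+B_{k,\ell}\ge N'_{k,\ell}+T'_{k,\ell}$ supplied by Corollary \ref{coro:AklPlusBklTableOfValues} meets the upper bound of the previous paragraph, forcing every inequality in the chain to be an equality. In particular $A_{k,\ell}+B_{k,\ell}$ equals the full count of nontrivial zeros, so there are no nontrivial zeros in the interior of $\mathcal{F}$; together with the trivial zeros at $i$ and $\rho$, which lie on the boundary, this shows every zero of $\Delta_{k,\ell}$ in $\mathcal{F}$ lies on the boundary. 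The forced equality also pins $v_i$ and $v_\rho$ to their minimal values, and from $A_{k,\ell}\ge N'_{k,\ell}$, $B_{k,\ell}\ge T'_{k,\ell}$, and $A_{k,\ell}+B_{k,\ell}=N'_{k,\ell}+T'_{k,\ell}$ I would conclude $A_{k,\ell}=N'_{k,\ell}$ and $B_{k,\ell}=T'_{k,\ell}$.

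I expect the only genuine obstacle to be the case analysis verifying that $N'_{k,\ell}+T'_{k,\ell}$ equals the nontrivial-zero count uniformly across all residues of $j$ and of $\ell \bmod 6$, including the exceptional line $k+\ell\equiv 2\pmod{12}$, together with the care needed to ensure the $\Gamma$-orbit bookkeeping neither over- nor under-counts boundary zeros near the corners $i$ and $\rho$. Everything else is immediate from the cited results, and the hypothesis $k\ge \stabp_j(\ell)$ enters only through its role in Theorem \ref{thm:Bkvalue}.
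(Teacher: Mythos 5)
Your proposal is correct and follows essentially the same route as the paper: the paper likewise deduces the corollary by noting (in the remark after Corollary \ref{coro:AklPlusBklTableOfValues}) that the tabulated lower bound $N_{k,\ell}'+T_{k,\ell}'$ coincides with the maximal number of nontrivial zeros permitted by the valence formula, namely $\lfloor\frac{k+\ell}{12}\rfloor-1$ (or $-2$ when $k+\ell\equiv 2\pmod{12}$), so the squeeze forces equality and places all nontrivial zeros on the boundary. Your additional care about the $\Gamma$-orbit bookkeeping and the minimal admissible values of $v_i$ and $v_\rho$ is exactly the content the paper leaves implicit in that remark.
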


When $\ell \leq k < \stabp_j(\ell)$, then there is still one unaccounted for zero of $\Delta_{k,\ell}$.  For this, we have
\begin{myprop}
 Suppose that $L \leq \ell \leq k < \stabp_j(\ell)$, and $k-\ell =12n +j$ with $n \geq 1$.  Then $A_{k,\ell} \geq N_{k,\ell}' + 1$.
\end{myprop}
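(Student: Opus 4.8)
The plan is to sharpen the sign-change count on the arc that gives Theorem~\ref{thm:Akvalue}, exhibiting one extra zero near the corner $\rho$ exactly in the window $\ell \le k < \stabp_j(\ell)$. First note that the statement has content only in the three exceptional classes of \eqref{eq:stabpdef}: in every other case $\stabp_j(\ell) = \ell$, and the hypothesis $\ell \le k < \stabp_j(\ell) = \ell$ is empty. So I would assume $\ell \equiv 0$ or $4 \pmod 6$ with the matching congruence on $j$, so that $\stabp_j(\ell) > \ell$ and the window is nonvacuous. Geometrically, the arc $\{e^{i\theta} : \pi/3 < \theta < \pi/2\}$ and the side $\{x = 1/2,\ y > \sqrt3/2\}$ meet only at $\rho$, so the single zero that migrates between them as $k$ grows must cross through $\rho$; the proposition asserts that for $k < \stabp_j(\ell)$ it still lies on the arc.

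Writing $z = e^{i\theta}$, the function $g(\theta) := e^{i(k+\ell)\theta/2}\,\Delta_{k,\ell}(e^{i\theta})$ is real (the standard Rankin--Swinnerton-Dyer normalization). Keeping the two dominant terms of each Eisenstein series, $E_k(z) \approx 1 + z^{-k}$, gives the cancellation $\Delta_{k,\ell}(e^{i\theta}) \approx e^{-ik\theta} + e^{-i\ell\theta}$, hence $g(\theta) \approx 2\cos\!\big(\tfrac{k-\ell}{2}\theta\big)$, whose sign changes on $(\pi/3, \pi/2)$ produce the bound $A_{k,\ell} \ge N_{k,\ell}'$. I would push this expansion one order further near the endpoint $\theta = \pi/3$, using the Jacobi theta approximation of $E_k, E_\ell, E_{k+\ell}$ that is valid where $\operatorname{Im} z$ is of moderate size, and carry out the parallel expansion of the (real) normalized $\Delta_{k,\ell}$ along the side $x = 1/2$ for $y$ slightly above $\sqrt3/2$.

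The decisive step is a sign analysis of these corner expansions. At $\theta = \pi/3$ the main term equals $2\cos(j\pi/6) \in \{\pm1, \pm2\}$ in the exceptional classes, so it does not vanish at $\rho$; whether the nearby migrating zero sits on the arc or on the side is therefore dictated by the first correction, into which the residues $\ell \bmod 6$ and $j \bmod 6$ enter through the subleading theta-coefficients. I would compute this correction explicitly and show that it forces the extra sign change of $g$ onto the arc precisely when $k < \stabp_j(\ell)$. Solving the resulting balance relation for $k$ yields a quadratic whose relevant root is the closed form in \eqref{eq:stabpdef}; the surds $\sqrt{3\ell^2-1}$ and $\sqrt{12\ell^2 - 12\ell + 1}$ are exactly these roots. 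Granting the additional sign change, the count upgrades to $A_{k,\ell} \ge N_{k,\ell}' + 1$.

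I expect the corner sign computation to be the main obstacle: one must track the subleading theta-coefficients with enough precision to pin the transition down to the exact threshold $\stabp_j(\ell)$ rather than merely to $O(1)$, and to confirm that no other boundary zero is created or lost in the process. A word on the logic: although it is tempting to combine $A_{k,\ell} \ge N_{k,\ell}'$ with the valence identity $A_{k,\ell} + B_{k,\ell} = N_{k,\ell}' + T_{k,\ell}'$ (the right side being the nontrivial-zero count computed after Corollary~\ref{coro:AklPlusBklTableOfValues}) to reduce the claim to $B_{k,\ell} \le T_{k,\ell}' - 1$, that identity presupposes all zeros lie on the boundary, which for $k < \stabp_j(\ell)$ is only established through the present proposition (it is the final ingredient of Theorem~\ref{thm:mainthmAllZerosOnBoundary}). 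Hence the direct corner count, not the valence bookkeeping, is what must carry the argument; the valence relation then serves only as a consistency check, forcing moreover $B_{k,\ell} = T_{k,\ell}' - 1$ in this window.
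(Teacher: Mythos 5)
You have correctly isolated the three exceptional classes of \eqref{eq:stabpdef} (elsewhere the window $\ell \leq k < \stabp_j(\ell)=\ell$ is empty), and your closing remark about non-circularity --- that one cannot invoke the valence bookkeeping because it presupposes the conclusion --- is sound. But your ``decisive step'' rests on a false premise. In each exceptional class one has $k+\ell \equiv 2$ or $4 \pmod{6}$, so the valence formula \emph{forces} $\Delta_{k,\ell}$ to vanish at $\rho$, to order $2$ (when $\ell \equiv 0$, $j\equiv 2$ or $\ell \equiv 4$, $j \equiv 0 \pmod 6$) or order $1$ (when $\ell \equiv 4$, $j \equiv 2 \pmod 6$). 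Equivalently, the real function $F_kF_\ell - F_{k+\ell}$, and its approximation $M_{k,\ell}$ from \eqref{eq:MkellDef}, vanish exactly at $\theta=\pi/3$. Your claim that the function ``does not vanish at $\rho$'' because the leading term is $2\cos(j\pi/6)\in\{\pm1,\pm2\}$ overlooks the secondary terms $2\cos(\tfrac{k\theta}{2})(2i\sin(\tfrac{\theta}{2}))^{-\ell}+2\cos(\tfrac{\ell\theta}{2})(2i\sin(\tfrac{\theta}{2}))^{-k}$: at the corner $2\sin(\pi/6)=1$, so these are of the same size as the main term and cancel it there exactly. Consequently no sign comparison between the value near $\pi/3$ and the nearest sample point, however many correction terms you retain, can be run as you describe --- the function carries no sign information at $\pi/3$ itself. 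A second, related misstep: the Jacobi theta approximation is the wrong tool on the arc. It is valid in the regime $k^{2/5}<y\ll k^{2/3}$ and is needed only on the side $x=1/2$; near $\rho$ on the arc one has $y\approx\sqrt{3}/2$, squarely in the regime of the elementary approximation \eqref{eq:EisensteinApproxTheoremSmally}, so there are no ``subleading theta-coefficients'' in play.

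What the paper does, and what your outline is missing, is a derivative analysis at this forced zero. Using holomorphy of $G_k$ and Cauchy's integral formula, one shows
\begin{equation*}
\frac{d^a}{d\theta^a}\bigl(F_k(\theta)F_\ell(\theta)-F_{k+\ell}(\theta)\bigr)\Big\vert_{\theta=\pi/3} = \frac{d^a}{d\theta^a}M_{k,\ell}(\theta)\Big\vert_{\theta=\pi/3} + O(k^a 2^{-\ell/2}), \qquad a=1,2,
\end{equation*}
so that the approximation may legitimately be differentiated (your plan never addresses this point, and without it a value-approximation cannot determine signs arbitrarily close to a zero). In the simple-zero case one computes $M'_{k,\ell}(\pi/3)=\sqrt{3}(2\ell-k)$ (sign reversed for $j=8$), whose sign flips exactly at $k=2\ell=\stabp_2(\ell)$; in the two double-zero cases the first derivative also vanishes and one computes $M''_{k,\ell}(\pi/3)$, a quadratic in $k$ whose relevant root is the surd appearing in \eqref{eq:stabpdef}. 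Comparing the sign of this first non-vanishing derivative --- i.e.\ the sign of $M_{k,\ell}$ just to the right of $\pi/3$ --- with the sign of $M_{k,\ell}$ at the nearest sample point $\theta_{2n+1}$ or $\theta_{2n+2}$ produces the extra sign change on the arc precisely when $k<\stabp_j(\ell)$. So your instinct that a quadratic in $k$ with roots $\frac{\ell-1+\sqrt{3\ell^2-1}}{2}$ and $\frac{4\ell-1+\sqrt{12\ell^2-12\ell+1}}{2}$ must emerge is correct, but it arises as the second derivative of $M_{k,\ell}$ at the corner, not from a balance between a non-vanishing main term and a theta-correction; repairing your argument requires replacing the corner value analysis by this derivative mechanism.
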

This provides the final missing zero, and shows
that if $\ell \geq L$, then all the zeros of $\Delta_{k,\ell}$ in $\mathcal{F}$ lie on the boundary, provided $k = 12n + j$ with $n \geq 1$.  The cases with $n=0$ are covered by Proposition \ref{prop:nozerosonarc}.  Altogether, the subsidiary results described above combine to give Theorem \ref{thm:mainthmAllZerosOnBoundary}.

Rudnick \cite[Theorem 2]{Rudnick} showed that if a family of cusp forms $f_k$ satisfies QUE\footnote{In a strong form which rules out ``escape of mass.''}, then the zeros of the $f_k$ become equidistributed in $\mathcal{F}$.  Our results on the zeros here then show that $\Delta_{k, \ell}$ do not satisfy QUE as $k+\ell \rightarrow \infty$.  It could be interesting to show directly that $\Delta_{k,\ell}$ have escape of mass (if true).

The assumption that $\ell \geq L$ in the above results is intended to simplify and shorten the proofs throughout this paper.  A value of $L$ could certainly be calculated with more work, but it is unlikely that $\ell=4$ (for instance) would be covered by this method.  The first-named author has shown for $\ell=4,6,8$ that $\Delta_{k,\ell}$ has all its zeros on the arc (we omitted the proof from this paper for brevity).   
For any particular value of $k$ and $\ell$, a computer can calculate $A_{k,\ell}$ and $B_{k,\ell}$, so it seems feasible that Conjecture \ref{conj:mainconj} could be proved using this strategy.

It is also interesting to study the finer distribution of the zeros of $\Delta_{k,\ell}$.  Our work in this paper shows that $\Delta_{k,\ell}$ has a zero on the arc in each interval of the form $(m \frac{2 \pi}{k-\ell}, (m+1) \frac{2 \pi}{k-\ell}) \subset (\frac{\pi}{3}, \frac{\pi}{2})$, provided $k-\ell \rightarrow \infty$, except possibly for a bounded number of zeros near $e^{i\pi/3}$.  From this, we may conclude that the zeros of $\Delta_{k,\ell}$ on the arc equidistribute on the arc uniformly with respect to $\theta$, as $k-\ell \rightarrow \infty$.  Similarly, $\Delta_{k,\ell}$ has a zero on the line $x=1/2$ between the points $1/2 + iy_m$ and $1/2 + i y_{m+1}$, where $1/2 + i y_m = R e^{i \theta_m}$, with $\theta_m = \frac{\pi m}{\ell} \in (\frac{\pi}{3}, \frac{\pi}{2})$, provided $\ell$ is large.  Thus the zeros of $\Delta_{k,\ell}$ on the line $x=1/2$ equidistribute according to their argument, as $\ell \rightarrow \infty$.

\subsection{Overview}
\label{section:Overview}
For context, we summarize the method of Rankin and Swinnerton-Dyer \cite{RS} showing that all the zeros of $E_k$ lie on the arc.
Let $F_k(\theta) = e^{ik \theta} E_k(e^{i \theta})$, so that by \eqref{eq:EkczdDefinition}, we have
\begin{equation}
 F_k(\theta) = \tfrac{1}{2} \sum_{(c,d) = 1} (c e^{i\theta/2} + d e^{-i\theta/2})^{-k}.
\end{equation}
Now $F_k$ is real-valued and 
vanishes if and only if $E_k(e^{i \theta})$ vanishes.  Explicitly evaluating  the terms with $c^2 + d^2 \leq 2$, and quoting a bound from \cite{RS} for the terms with $c^2 + d^2 \geq 5$, we derive
\begin{equation}
\label{eq:FkApproximationInitial}
 F_k(\theta) = 2\cos(\tfrac{k \theta}{2}) + (2 \cos(\tfrac{\theta}{2}))^{-k} + (2i \sin(\tfrac{\theta}{2}))^{-k} + R_k(\theta),
\end{equation} 
where
\begin{equation}
\label{eq:Rkthetabound}
 |R_k(\theta)| \leq 4(\tfrac{5}{2})^{-\frac{k}{2}} + \tfrac{20\sqrt{2}}{k-3}(\tfrac{9}{2})^{\frac{3-k}{2}}.
\end{equation}
Note that if $k \geq 14$, then $|R_k(\theta)| \leq 7.3 \times 10^{-3}$.  Moreover, for $\theta \in [\frac{\pi}{3}, \frac{\pi}{2}]$, $2\cos(\frac{\theta}{2}) \geq \sqrt{2}$, and $2\sin(\frac{\theta}{2}) \geq 1$, so $|F_k(\theta) - 2 \cos(\frac{k \theta}{2})| \leq 1.016$.  By choosing values of $\theta$ so that $2 \cos(\frac{k \theta}{2}) = \pm 2$, we see that $F_k(\theta)$ alternates sign along these points, and therefore changes sign between these points.  A careful count of these points, and a comparison to the valence formula \eqref{eq:valenceFormula}, shows that this accounts for all the zeros of $E_k$.

Our work here follows the same general method of approximating $\Delta_{k,\ell}$ and evaluating it at a collection of sample points.  Our approximation is more complicated because it is necessary to retain more terms, as indicated in \eqref{eq:FkApproximationInitial}.  The analysis of the sample points along the arc becomes most intricate near the corner point $e^{i \pi/3}$.

We also separately perform an approximtion for $\Delta_{k,\ell}$ along the side $x=1/2$.  For small values of $y$ (which turns out to be the rather wide range $1\ll y \ll k^{1/2 + o(1)})$, we can safely discard the terms in \eqref{eq:EkczdDefinition} with $c^2 + d^2 \geq 5$, similarly to \eqref{eq:FkApproximationInitial}.  Meanwhile, for $y \gg k^{1/2 + o(1)}$, it is easiest to apply the Fourier expansion, which recall 
takes the form
\begin{equation}
\label{eq:EkFourierExpansion}
 E_k(z) = 1 + \gamma_k \sum_{n=1}^{\infty} \sigma_{k-1}(n) e(nz), 
\end{equation}
where with $B_k$ denoting the Bernoulli numbers,
\begin{equation}
 \gamma_k = (-1)^{k/2} \frac{(2 \pi )^k}{\Gamma(k) \zeta(k)} = \frac{-2k}{B_k}.
\end{equation}
It turns out that the Fourier expansion is well-approximated by a single term $n \approx \frac{k}{2 \pi y}$, in this range.  The problem now is that the range with $y \asymp k^{1/2}$ is not covered by these two types of approximations.  We have been able to cover this range by the approximation $E_k(z) = 1 + \sum_{d \in \mz} (z+d)^{-k} + \sum_{|c| \geq 2} (\dots)$, with the sum over $|c| \geq 2$ being a small error term.  Then we develop a more precise approximation for $\sum_{d \in \mz} (z+d)^{-k}$ in terms of the Jacobi theta function, which recall is defined by
by
\begin{equation}
\label{eq:JacobithetaDefinition}
\theta(w, \tau) = \sum_{n \in \mz} \exp(\pi i n^2 \tau + 2 \pi i n w),
\end{equation}
for $\tau \in \mathbb{H}$ and $w \in \mc$.  This feature of the Eisenstein series is of independent interest, and should be useful for other investigations into the behavior of large weight Eisenstein series.  See Theorem \ref{thm:EisensteinApproxTheorem} below for this result.

The Jacobi theta function satisfies the modularity relation
\begin{equation}
\label{eq:JacobithetaModularity}
 \theta(w/\tau, -1/\tau) = (-i \tau)^{1/2} \exp(\pi i w^2/\tau) \theta(w,\tau).
\end{equation}
The use of the modularity relation turns out to be crucial for approximations throughout the transition region $y \asymp k^{1/2}$.
Of course, modularity of the theta function, and the Fourier expansion of the Eisenstein series, are both applications of the Poisson summation formula, so it is no surprise that these are reflections of this fact.

\section{Zeros on the arc}
\subsection{Estimates for trigonometric functions}
We first introduce some lemmas.
\begin{mylemma}
\label{lemma:TrigEasyEstimates}
Suppose that $N \geq 8$, $N \equiv 2$ (mod $6$), and $\tfrac{\pi}{3} \leq \theta \leq \tfrac{\pi}{3}(1+\tfrac{1}{N})$. Then $\vert \cos(\tfrac{N\theta}{2}) \vert \leq \tfrac{1}{2}$.  Similarly, if $N \geq 10$, $N \equiv 4$ (mod $6$), and $\tfrac{\pi}{3} \leq \theta \leq \tfrac{\pi}{3}(1+\tfrac{1}{2N})$, then $\tfrac{1}{2} \leq \vert \cos(\tfrac{N\theta}{2}) \vert \leq \frac{1}{\sqrt{2}}$.
\end{mylemma}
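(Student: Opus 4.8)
The plan is to exploit the congruence conditions on $N$ to pin down the argument $\tfrac{N\theta}{2}$ modulo $\pi$ at the left endpoint $\theta = \tfrac{\pi}{3}$, and then to observe that over the short $\theta$-interval in each hypothesis the argument traverses a subinterval of \emph{fixed length} (independent of $N$) on which $|\cos|$ is monotone. The key computation is that in both statements the factor of $N$ cancels when one computes the increment of $\tfrac{N\theta}{2}$ across the interval.

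For the first statement I would write $N = 6m+2$. At $\theta = \tfrac{\pi}{3}$ this gives $\tfrac{N\theta}{2} = \tfrac{N\pi}{6} = m\pi + \tfrac{\pi}{3}$, while at the right endpoint $\theta = \tfrac{\pi}{3}(1+\tfrac{1}{N}) = \tfrac{\pi}{3} + \tfrac{\pi}{3N}$ one finds $\tfrac{N\theta}{2} = \tfrac{N\pi}{6} + \tfrac{\pi}{6} = m\pi + \tfrac{\pi}{2}$, an increment of exactly $\tfrac{\pi}{6}$. Writing $\tfrac{N\theta}{2} = m\pi + \phi$ with $\phi \in [\tfrac{\pi}{3}, \tfrac{\pi}{2}]$, we have $|\cos(\tfrac{N\theta}{2})| = |\cos\phi|$, and on $[\tfrac{\pi}{3}, \tfrac{\pi}{2}]$ the function $\cos\phi$ decreases from $\tfrac{1}{2}$ to $0$, so $|\cos(\tfrac{N\theta}{2})| \leq \tfrac{1}{2}$. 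For the second statement I would write $N = 6m+4$. Now $\theta = \tfrac{\pi}{3}$ gives $\tfrac{N\theta}{2} = m\pi + \tfrac{2\pi}{3}$, and $\theta = \tfrac{\pi}{3}(1+\tfrac{1}{2N}) = \tfrac{\pi}{3} + \tfrac{\pi}{6N}$ gives $\tfrac{N\theta}{2} = m\pi + \tfrac{2\pi}{3} + \tfrac{\pi}{12} = m\pi + \tfrac{3\pi}{4}$, again an $N$-independent increment, this time $\tfrac{\pi}{12}$. Setting $\tfrac{N\theta}{2} = m\pi + \phi$ with $\phi \in [\tfrac{2\pi}{3}, \tfrac{3\pi}{4}]$, on this interval $\cos\phi$ is negative and decreasing, so $|\cos\phi|$ increases monotonically from $\tfrac{1}{2}$ to $\tfrac{1}{\sqrt{2}}$, which is exactly the claimed two-sided bound.

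There is no genuine obstacle: the lemma reduces to evaluating the cosine at the two endpoints and invoking monotonicity of $|\cos|$ on a fixed subinterval of a single branch. The only point requiring care is the bookkeeping with the residue of $N$ modulo $6$, which is precisely what forces the left endpoint to land at $\tfrac{\pi}{3}$ (respectively $\tfrac{2\pi}{3}$) modulo $\pi$; the shift by $m\pi$ is irrelevant since it does not affect $|\cos|$. The hypotheses $N \geq 8$ and $N \geq 10$ guarantee $m \geq 1$ but play no essential role in the estimate itself, and are presumably retained for compatibility with the range of parameters arising in the intended application.
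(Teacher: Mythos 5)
Your proof is correct and follows essentially the same route as the paper: both compute $\tfrac{N\theta}{2} = \tfrac{N\pi}{6} + \tfrac{N}{2}(\theta - \tfrac{\pi}{3})$, use the congruence on $N$ to place $\tfrac{N\pi}{6}$ at $\tfrac{\pi}{3}$ (respectively $\tfrac{2\pi}{3}$) modulo $\pi$, note the increment lies in $[0,\tfrac{\pi}{6}]$ (respectively $[0,\tfrac{\pi}{12}]$), and read off the bounds from monotonicity of $|\cos|$ on the resulting interval. Your writeup is, if anything, slightly more explicit about the endpoint evaluations and the monotonicity step than the paper's.
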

\begin{proof}
We have
\begin{equation}
\cos(\tfrac{N\theta}{2}) 
%=\cos(\tfrac{N}{2}(\tfrac{\pi}{3}+(\theta -\tfrac{\pi}{3})) 
= \cos(\tfrac{N\pi}{6}+\tfrac{N}{2}(\theta-\tfrac{\pi}{3})).
\end{equation}
In the case $N \equiv 2 \pmod{6}$, the assumed bound on $\theta$ implies 
$0 \leq \tfrac{N}{2}(\theta-\tfrac{\pi}{3}) \leq \tfrac{\pi}{6}$. 
If $N \equiv 2 \pmod{6}$, then modulo $\pi$, we have
$\tfrac{N\theta}{2}  \in [\tfrac{\pi}{3}, \tfrac{\pi}{2}]$, and therefore $\vert \cos(\tfrac{N\theta}{2}) \vert \leq \tfrac{1}{2}$.  A similar argument applies to $N \equiv 4 \pmod{6}$, giving that modulo $\pi$, we have $\frac{N \theta}{2} \in [\frac{2\pi}{3}, \frac{3\pi}{4}]$. %or $\frac{N \theta}{2} \in [\frac{5\pi}{3}, \frac{7\pi}{4}]$.
\end{proof}

\begin{mylemma}
\label{lemma:sinMonotonic}
The function $(2 \sin(\frac{\pi}{6} + \frac{\pi}{X}))^{-X}$ is monotonically decreasing for $X > 6/5$.  
\end{mylemma}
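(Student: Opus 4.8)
The plan is to pass to the logarithmic derivative. Writing $f(X) = 2\sin\left(\tfrac{\pi}{6} + \tfrac{\pi}{X}\right)$, I first observe that for $X > 6/5$ one has $0 < \tfrac{\pi}{X} < \tfrac{5\pi}{6}$, so the argument $\tfrac{\pi}{6} + \tfrac{\pi}{X}$ lies in $(\tfrac{\pi}{6}, \pi)$, where the sine is positive; hence $f(X) > 0$ and $g(X) := f(X)^{-X}$ is a well-defined positive function. Since $g > 0$, it suffices to show $g'(X) < 0$, equivalently that the logarithmic derivative $\frac{g'(X)}{g(X)} = -\log f(X) - X\frac{f'(X)}{f(X)}$ is negative throughout $X > 6/5$.

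Next I would compute $f'(X) = -\tfrac{2\pi}{X^2}\cos\left(\tfrac{\pi}{6}+\tfrac{\pi}{X}\right)$, so that $X\frac{f'(X)}{f(X)} = -\tfrac{\pi}{X}\cot\left(\tfrac{\pi}{6}+\tfrac{\pi}{X}\right)$, and the logarithmic derivative becomes
\[
\frac{g'(X)}{g(X)} = -\log\!\left(2\sin\left(\tfrac{\pi}{6}+\tfrac{\pi}{X}\right)\right) + \tfrac{\pi}{X}\cot\left(\tfrac{\pi}{6}+\tfrac{\pi}{X}\right).
\]
To clean this up I introduce the single variable $\phi = \tfrac{\pi}{6} + \tfrac{\pi}{X}$, which ranges bijectively over $(\tfrac{\pi}{6}, \pi)$ as $X$ ranges over $(6/5, \infty)$, and note $\tfrac{\pi}{X} = \phi - \tfrac{\pi}{6}$. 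Then $g'(X) < 0$ is equivalent to the inequality $h(\phi) > 0$, where I set $h(\phi) := \log(2\sin\phi) - \left(\phi - \tfrac{\pi}{6}\right)\cot\phi$.

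The heart of the argument is to show $h(\phi) > 0$ on $(\tfrac{\pi}{6}, \pi)$, and I would do this by differentiation. A direct computation gives $h'(\phi) = \cot\phi - \cot\phi + \left(\phi - \tfrac{\pi}{6}\right)\csc^2\phi = \left(\phi - \tfrac{\pi}{6}\right)\csc^2\phi$, where the two copies of $\cot\phi$ (coming from $\tfrac{d}{d\phi}\log(2\sin\phi)$ and from the product rule on $(\phi-\tfrac{\pi}{6})\cot\phi$) cancel. Since $\phi > \tfrac{\pi}{6}$ and $\csc^2\phi > 0$ on the interval, $h'(\phi) > 0$, so $h$ is strictly increasing; combined with the boundary value $h(\tfrac{\pi}{6}) = \log(2\sin\tfrac{\pi}{6}) - 0 = \log 1 = 0$, this yields $h(\phi) > 0$ for all $\phi \in (\tfrac{\pi}{6}, \pi)$, hence $g'(X) < 0$ as desired.

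I do not anticipate a serious obstacle: the only delicate point is the fortunate cancellation of the $\cot\phi$ terms in $h'$, which reduces a potentially messy inequality to the manifestly positive expression $\left(\phi - \tfrac{\pi}{6}\right)\csc^2\phi$. The real content is entirely in choosing the substitution $\phi = \tfrac{\pi}{6}+\tfrac{\pi}{X}$ and recognizing that the claim is equivalent to a strictly-increasing-function-with-zero-left-endpoint argument; once this setup is in place, no estimates or case analysis are required.
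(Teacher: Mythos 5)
Your proof is correct and is essentially the paper's own argument: the paper likewise passes to $-X\log\bigl(2\sin(\tfrac{\pi}{6}+\tfrac{\pi}{X})\bigr)$, computes the same derivative, substitutes $Y=\tfrac{\pi}{X}$ (your $\phi-\tfrac{\pi}{6}$), and exploits the identical cancellation of the $\cot$ terms to reduce to the sign of $Y\csc^{2}(\tfrac{\pi}{6}+Y)$ together with the vanishing boundary value. Your $h(\phi)$ is just the negative of the paper's $g(Y)$ under the shift $\phi=\tfrac{\pi}{6}+Y$, so the two proofs coincide up to sign conventions.
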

\begin{proof}
It suffices to show that $f(X):= -X \log(2 \sin(\frac{\pi}{6} + \frac{\pi}{X}))$ is decreasing, which amounts to showing $f'(X) \leq 0$.  We have
\begin{equation}
 f'(X) = - \log(2 \sin(\tfrac{\pi}{6} + \tfrac{\pi}{X})) + \tfrac{\pi}{X} \cot(\tfrac{\pi}{6} + \tfrac{\pi}{X}).
\end{equation}
Changing variables $Y = \frac{\pi}{X}$, we then want to show that
\begin{equation}
 g(Y) := - \log(2 \sin(\tfrac{\pi}{6} + Y) + Y \cot(\tfrac{\pi}{6} + Y) \leq 0,
\end{equation}
for $0 \leq Y < \frac{5 \pi}{6}$.  For this, we note $g(0) = 0$, and
\begin{equation}
 g'(Y) = - Y \csc(\tfrac{\pi}{6} + Y)^2,
\end{equation}
which is $\leq 0$ on the interval $[0, 5 \pi /6)$.  
\end{proof}
\begin{mycoro}
\label{coro:sinBounds}
Suppose $\frac{\pi}{3} + \frac{\pi}{N} \leq \theta \leq \frac{\pi}{2}$, where $N \geq 6$ is real.  Then $(2 \sin(\tfrac{\theta}{2}))^{-N} \leq \frac18$.  %If $c=\frac{\pi}{3}$ and $N \geq 2$  then $(2 \sin(\tfrac{\theta}{2}))^{-N} \leq \frac12$. If $c=\frac{\pi}{48}$ and $N \geq 2$ then $(2 \sin(\tfrac{\theta}{2}))^{-N} < 0.95$.
\end{mycoro}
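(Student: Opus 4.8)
The plan is to reduce to the worst case in both variables and then invoke Lemma \ref{lemma:sinMonotonic}. First I would fix $N$ and observe that on the interval $\theta \in [\tfrac{\pi}{3}+\tfrac{\pi}{N}, \tfrac{\pi}{2}]$ the map $\theta \mapsto 2\sin(\tfrac{\theta}{2})$ is positive and increasing, since $\tfrac{\theta}{2}$ then ranges inside $[\tfrac{\pi}{6}, \tfrac{\pi}{4}] \subset (0, \tfrac{\pi}{2})$ where $\sin$ is increasing. Composing with the decreasing map $x \mapsto x^{-N}$ (valid as $N \geq 6 > 0$ and the base is positive) shows $(2\sin(\tfrac{\theta}{2}))^{-N}$ is decreasing in $\theta$, so its maximum over the interval is attained at the left endpoint $\theta = \tfrac{\pi}{3}+\tfrac{\pi}{N}$, where $\tfrac{\theta}{2} = \tfrac{\pi}{6}+\tfrac{\pi}{2N}$. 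It therefore suffices to prove $(2\sin(\tfrac{\pi}{6}+\tfrac{\pi}{2N}))^{-N} \leq \tfrac{1}{8}$ for all real $N \geq 6$.

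Next I would match this to the function appearing in Lemma \ref{lemma:sinMonotonic} by passing to the squared quantity so that the exponent lines up with the argument. Setting $X = 2N$, the expression $(2\sin(\tfrac{\pi}{6}+\tfrac{\pi}{2N}))^{-2N}$ is exactly $(2\sin(\tfrac{\pi}{6}+\tfrac{\pi}{X}))^{-X}$, which by Lemma \ref{lemma:sinMonotonic} is monotonically decreasing in $X$ on the range $X = 2N \geq 12$, well inside the hypothesis $X > 6/5$. Hence its supremum over $N \geq 6$ is attained at $N = 6$, and since $\tfrac{\pi}{6}+\tfrac{\pi}{12} = \tfrac{\pi}{4}$ with $2\sin(\tfrac{\pi}{4}) = \sqrt{2}$, that value equals $(\sqrt{2})^{-12} = 2^{-6} = \tfrac{1}{64}$. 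Taking positive square roots gives $(2\sin(\tfrac{\pi}{6}+\tfrac{\pi}{2N}))^{-N} \leq (\tfrac{1}{64})^{1/2} = \tfrac{1}{8}$, which is precisely the bound required; combined with the first step this proves the corollary.

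I do not expect a serious obstacle here: the two monotonicity facts (in $\theta$ directly, and in $N$ via the lemma) are both elementary, and the only mild subtlety is the bookkeeping trick of squaring so that the exponent $2N$ matches the argument $\tfrac{\pi}{2N}$ demanded by Lemma \ref{lemma:sinMonotonic}. It is worth recording that the bound is sharp: equality holds exactly at $N = 6$, $\theta = \tfrac{\pi}{2}$, where the defining interval degenerates to the single point $\theta = \tfrac{\pi}{2}$.
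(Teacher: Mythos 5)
Your proof is correct and takes essentially the same approach as the paper: reduce to the left endpoint $\theta = \tfrac{\pi}{3} + \tfrac{\pi}{N}$ by monotonicity in $\theta$, then square so that Lemma \ref{lemma:sinMonotonic} applies with $X = 2N \geq 12$, and take square roots to conclude the bound $\tfrac18$. The paper's proof is exactly this chain of inequalities, compressed into the single display \eqref{eq:sinBoundansatz}; your explicit intermediate value $\tfrac{1}{64}$ and the sharpness remark at $N=6$ are harmless additions.
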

\begin{proof}
By elementary considerations, the minimal value of
$(2 \sin(\theta/2))^{-N}$ occurs at $\theta = \frac{\pi}{3} + \frac{\pi}{N}$.  
%First consider the case $c=\pi$ and $N \geq 6$.  
By Lemma \ref{lemma:sinMonotonic}, 
\begin{equation}
\label{eq:sinBoundansatz}
(2 \sin(\tfrac{\pi}{6} + \tfrac{\pi}{2N}))^{-N} = \Big[(2 \sin(\tfrac{\pi}{6} + \tfrac{\pi}{2N}))^{-2N}\Big]^{1/2} \leq (2 \sin(\tfrac{\pi}{6} + \tfrac{\pi}{12}))^{-6} = \tfrac{1}{8}. \qedhere
\end{equation}
% Similarly, if $c=\frac{\pi}{3}$, and $N \geq 2$, we have
% \begin{equation}
% (2 \sin(\tfrac{\pi}{6} + \tfrac{\pi}{6N}))^{-N} = \Big[(2 \sin(\tfrac{\pi}{6} + \tfrac{\pi}{6N}))^{-6N}\Big]^{1/6} \leq (2 \sin(\tfrac{\pi}{6} + \tfrac{\pi}{12}))^{-2} = \tfrac{1}{2}.
% \end{equation}
% Finally, if $c=\frac{\pi}{48}$ and $N \geq 2$, then
% \begin{equation}
% (2 \sin(\tfrac{\pi}{6} + \tfrac{\pi}{96N}))^{-N} = \Big[(2 \sin(\tfrac{\pi}{6} + \tfrac{\pi}{96N}))^{-96N}\Big]^{1/96} \leq (2 \sin(\tfrac{\pi}{6} + \tfrac{\pi}{192}))^{-2} = 0.9458\dots. \qedhere
% \end{equation}
\end{proof}
Remark.  In the forthcoming work, we shall apply variations on Corollary \ref{coro:sinBounds}, which all follow the same steps indicated in \eqref{eq:sinBoundansatz}.  We have found it easiest to simply repeat the argument on demand.

\subsection{Estimates for the Eisenstein series on the arc}
Similarly to the method of Rankin and Swinnerton-Dyer discussed in Section \ref{section:Overview}, we shall work with $F_k F_{\ell} - F_{k+\ell}$, which is real-valued and vanishes if and only if $\Delta_{k,\ell}$ vanishes.

\begin{mylemma}
\label{lemma:FkapproxOnArc}
 For $\theta \in [\frac{\pi}{3}, \frac{\pi}{2}]$, and for $k \geq \ell \geq 14$, we have
\begin{equation}
 (F_k F_{\ell} - F_{k+\ell})(\theta) = 
 M_{k,\ell}(\theta) + 
  \mathcal{E}_{k,l}(\theta),
\end{equation}
where
\begin{equation}
\label{eq:MkellDef}
 M_{k,\ell}(\theta) = 2\cos(\tfrac{(k-\ell)\theta}{2}) +2\cos(\tfrac{k \theta}{2})(2i\sin(\tfrac{\theta}{2}))^{-\ell} + 2\cos(\tfrac{\ell \theta}{2})(2i\sin(\tfrac{\theta}{2}))^{-k},
\end{equation}
and
\begin{equation}
\label{eq:mathcalEerror}
 |\mathcal{E}_{k,\ell}(\theta)| \leq 0.091.
\end{equation}
\end{mylemma}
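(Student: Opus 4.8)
The plan is to insert the Rankin--Swinnerton-Dyer type approximation \eqref{eq:FkApproximationInitial} for each of $F_k$, $F_\ell$, and $F_{k+\ell}$ and to expand the resulting product. For brevity write $F_m(\theta) = c_m + s_m + t_m + R_m$, where
\[
c_m = 2\cos(\tfrac{m\theta}{2}), \qquad s_m = (2\cos(\tfrac{\theta}{2}))^{-m}, \qquad t_m = (2i\sin(\tfrac{\theta}{2}))^{-m},
\]
and $R_m = R_m(\theta)$ is the remainder controlled by \eqref{eq:Rkthetabound}. Multiplying out $F_k F_\ell$ and subtracting $F_{k+\ell} = c_{k+\ell} + s_{k+\ell} + t_{k+\ell} + R_{k+\ell}$ produces sixteen product terms minus four; the task is to isolate the three that constitute $M_{k,\ell}$ and to show that everything else is bounded by $0.091$.

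First I would extract the main terms. The product-to-sum identity gives $c_k c_\ell = 2\cos(\tfrac{(k-\ell)\theta}{2}) + 2\cos(\tfrac{(k+\ell)\theta}{2})$, and since $c_{k+\ell} = 2\cos(\tfrac{(k+\ell)\theta}{2})$ we obtain $c_k c_\ell - c_{k+\ell} = 2\cos(\tfrac{(k-\ell)\theta}{2})$, which is the first term of \eqref{eq:MkellDef}. The cross terms $c_k t_\ell + c_\ell t_k$ supply the remaining two terms of $M_{k,\ell}$; these are retained rather than discarded precisely because $|t_m| \le 1$ fails to be small near $\theta = \tfrac{\pi}{3}$.

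The crucial observation is a pair of exact cancellations: because $s_m$ and $t_m$ are pure exponentials in $m$, one has $s_k s_\ell = s_{k+\ell}$ and $t_k t_\ell = t_{k+\ell}$, so that $s_k s_\ell - s_{k+\ell} = 0$ and $t_k t_\ell - t_{k+\ell} = 0$. This removes the two potentially dangerous contributions (the $t$ product being of size comparable to $1$ near the corner) and is what makes the bound \eqref{eq:mathcalEerror} attainable. Everything that survives is then collected into
\[
\mathcal{E}_{k,\ell} = c_k s_\ell + s_k c_\ell + s_k t_\ell + t_k s_\ell + c_k R_\ell + R_k c_\ell + t_k R_\ell + R_k t_\ell + s_k R_\ell + R_k s_\ell + R_k R_\ell - R_{k+\ell}.
\]

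To bound $\mathcal{E}_{k,\ell}$ I would use the elementary estimates on $[\tfrac{\pi}{3}, \tfrac{\pi}{2}]$, namely $2\cos(\tfrac{\theta}{2}) \ge \sqrt{2}$ and $2\sin(\tfrac{\theta}{2}) \ge 1$, which give $|s_m| \le 2^{-m/2}$, $|t_m| \le 1$, and $|c_m| \le 2$, together with $|R_m| \le 7.3\times 10^{-3}$ from \eqref{eq:Rkthetabound} whenever $m \ge 14$. Since $k \ge \ell \ge 14$ we have $|s_k|, |s_\ell| \le 2^{-7}$, and since $k+\ell \ge 28$ the term $R_{k+\ell}$ is in fact of order $10^{-5}$ and hence negligible. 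Summing the dominant groups $|c_k s_\ell| + |s_k c_\ell| \le 2^{-5}$, $|s_k t_\ell| + |t_k s_\ell| \le 2^{-6}$, $|c_k R_\ell| + |R_k c_\ell| \le 4\cdot 7.3\times 10^{-3}$, and $|t_k R_\ell| + |R_k t_\ell| \le 2\cdot 7.3\times 10^{-3}$, plus the negligible remainders, yields a total just below $0.091$. The only genuine obstacle is this numerical bookkeeping: the crude bound $|t_m| \le 1$ forces the terms $t_k R_\ell + R_k t_\ell$ to be carried at full size, so the margin is tight, and one must actually exploit both the exact cancellations and the smallness of $R_{k+\ell}$ (rather than bounding it by $7.3\times 10^{-3}$) in order to come in under the stated constant.
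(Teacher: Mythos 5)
Your proposal is correct and follows essentially the same route as the paper's proof: both expand the product of the approximations \eqref{eq:FkApproximationInitial}, use the identity $c_kc_\ell - c_{k+\ell} = 2\cos(\tfrac{(k-\ell)\theta}{2})$ together with the exact cancellations $s_ks_\ell = s_{k+\ell}$ and $t_kt_\ell = t_{k+\ell}$, keep $c_kt_\ell + c_\ell t_k$ as main terms, and bound the leftover terms via $2\cos(\tfrac{\theta}{2}) \geq \sqrt{2}$, $2\sin(\tfrac{\theta}{2}) \geq 1$, and \eqref{eq:Rkthetabound} at $k=\ell=14$. Your numerical bookkeeping (roughly $0.047$ from the trigonometric cross terms plus $0.044$ from the $R$-terms, with $R_{k+\ell}$ evaluated at weight $28$ and hence negligible) matches the paper's accounting exactly.
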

Remark.  It is necessary to include the secondary terms in \eqref{eq:MkellDef} in order to approximate $F_k F_{\ell} - F_{k+\ell}$ with $\theta$ near $\frac{\pi}{3}$.
\begin{proof}
By direct calculation with \eqref{eq:FkApproximationInitial}, using $4\cos( \frac{k\theta}{2}) \cos( \frac{\ell\theta}{2}) - 2 \cos(\frac{(k + \ell) \theta}{2}) = 2 \cos(\frac{(k-\ell)\theta}{2})$, we have
\begin{multline}
\label{eq:FkFellformula}
(F_kF_{\ell}-F_{k+\ell})(\theta) = 2\cos(\tfrac{(k-\ell)\theta}{2}) +2\cos(\tfrac{k \theta}{2})(2i\sin(\tfrac{\theta}{2}))^{-\ell} + 2\cos(\tfrac{\ell \theta}{2})(2i\sin(\tfrac{\theta}{2}))^{-k} 
\\ 
+ 2\cos(\tfrac{k\theta}{2})(2\cos(\tfrac{\theta}{2}))^{-\ell} + 2\cos(\tfrac{\ell \theta}{2})(2\cos(\tfrac{\theta}{2}))^{-k} 
\\
+(2\cos(\tfrac{\theta}{2}))^{-k}(2i\sin(\tfrac{\theta}{2}))^{-\ell} 
+(2\cos(\tfrac{\theta}{2}))^{-\ell}(2i\sin(\tfrac{\theta}{2}))^{-k} + R_{k,\ell}(\theta),
\end{multline}
where 
\begin{multline}
 |R_{k,\ell}(\theta)| \leq |R_k(\theta)| (2+(2\cos(\tfrac{\theta}{2}))^{-\ell}+(2\sin(\tfrac{\theta}{2}))^{-\ell})  
 \\
 + |R_{\ell}(\theta)| (2+(2\cos(\tfrac{\theta}{2}))^{-k}+(2\sin(\tfrac{\theta}{2}))^{-k}) 
 + |R_k(\theta) R_{\ell}(\theta)| + |R_{k+\ell}(\theta)|.
\end{multline}
% 
% \begin{multline}
% |R_{k,\ell}(\theta)| \leq (4(\tfrac{5}{2})^{-\frac{\ell}{2}}+ \tfrac{20\sqrt{2}}{\ell-3}(\tfrac{9}{2})^{\frac{3-\ell}{2}})(2+(2\cos(\tfrac{\theta}{2}))^{-k}+(2\sin(\tfrac{\theta}{2}))^{-k}) 
% \\ 
% + (4(\tfrac{5}{2})^{-\frac{k}{2}}+ \tfrac{20\sqrt{2}}{k-3}(\tfrac{9}{2})^{\frac{3-k}{2}})(2+(2\cos(\tfrac{\theta}{2}))^{-\ell}+(2\sin(\tfrac{\theta}{2}))^{-\ell})  
% \\ 
% +(4(\tfrac{5}{2})^{-\frac{k}{2}} + \tfrac{20\sqrt{2}}{k-3}(\tfrac{9}{2})^{\frac{3-k}{2}})(4(\tfrac{5}{2})^{-\frac{\ell}{2}}+ \tfrac{20\sqrt{2}}{\ell-3}(\tfrac{9}{2})^{\frac{3-\ell}{2}}) 
% +
% (4(\tfrac{5}{2})^{-\frac{k+\ell}{2}}+ \tfrac{20\sqrt{2}}{k+\ell-3}(\tfrac{9}{2})^{\frac{3-k-\ell}{2}}).
% \end{multline}
Using that $(2 \cos(\frac{\theta}{2}))^{-1} \leq 2^{-1/2}$  and $(2 \sin(\frac{\theta}{2}))^{-1} \leq 1$ on $[\frac{\pi}{3}, \frac{\pi}{2}]$, and combining with \eqref{eq:Rkthetabound}, we then easily obtain a bound on $|R_{k,\ell}(\theta)|$ that is decreasing in both $k$, $\ell$.  Evaluating the bound at $k=\ell = 14$, we derive that
\begin{equation}
\label{eq:Rkellbound}
 |R_{k,\ell}(\theta)| \leq 0.044.
\end{equation}

Similarly, we bound the trigonometric terms on the second and third lines of \eqref{eq:FkFellformula} by $2 \times 2^{-\ell/2} + 2 \times 2^{-k/2} + 2^{-k/2} + 2^{-\ell/2} \leq 6 \times 2^{-7} = 0.046\dots$.  Adding this with \eqref{eq:Rkellbound} gives the desired result.
\end{proof}

\subsection{Sample points}
\label{section:samplepointsAkl}
Our goal now is to count sign changes of $F_k F_{\ell} - F_{k+\ell}$ by finding a sequence of points where this function changes sign.  
Write $k-\ell = 12n+j$ for $n \geq 1$  and $j \in \{0,2,4,6,8,10\}$. We claim that $F_kF_{\ell} - F_{k+\ell}$ exhibits sign changes at the points $\theta_m = \tfrac{2m\pi}{k-\ell} = \tfrac{2m\pi}{12n+j}$ where $m$ ranges over integers such that $\theta_m \in (\frac{\pi}{3}, \frac{\pi}{2}]$ (equivalently, $m \in (2n + \frac{j}{6}, 3n + \frac{j}{4}]$).
%$m \in [\lfloor \tfrac{12n+j}{6} \rfloor + 1, \lfloor \tfrac{12n+j}{4} \rfloor]$. 
By an elementary calculation, there are $n$ integers in this interval if $j=0,2,6$ and $n+1$ if $j=4,8,10$.

We have
\begin{equation}
%\begin{split}
 2\cos(\tfrac{(k-\ell)\theta_m}{2}) = 2\cos(m \pi) = 2(-1)^m, \quad \text{and} \quad
 2\cos(\tfrac{k\theta_m}{2}) = 2(-1)^m\cos(\tfrac{\ell \theta_m}{2}).
 %\\
 %2\cos(\tfrac{\ell \theta_m}{2}) = 2\cos(\tfrac{\ell \theta_m}{2}).
 %\end{split}
\end{equation}
% 
% 
% \begin{equation}
% 2\cos(\tfrac{(k-\ell)\theta_m}{2}) = 2\cos(m \pi) = 2(-1)^m
% \end{equation}
% \begin{equation}
% 2\cos(\tfrac{k\theta_m}{2}) = 2(-1)^m\cos(\tfrac{\ell \theta_m}{2})
% \end{equation}
% \begin{equation}
% 2\cos(\tfrac{\ell \theta_m}{2}) = 2\cos(\tfrac{\ell \theta_m}{2}).
% \end{equation}
Therefore,
\begin{equation}
\label{eq:Mellnlesspsecific}
M_{k, \ell}(\theta_m) = 2(-1)^m [1 + \cos(\tfrac{\ell \theta_m}{2}) \{(2i \sin(\tfrac{\theta_m}{2}))^{-\ell} + (-1)^m (2i \sin(\tfrac{\theta_m}{2}))^{-k}\}].
\end{equation}
% 
% 
% \begin{equation} \label{eq:Mellnlesspsecific}
% M_{k, \ell}(\theta_m) = 2\cos(\tfrac{(12n+j)\theta_m}{2}) +2\cos(\tfrac{(12n + j + \ell)\theta_m}{2})(2i\sin(\tfrac{\theta_m}{2}))^{-\ell} + 2\cos(\tfrac{\ell \theta_m}{2})(2i\sin(\tfrac{\theta_m}{2}))^{-12n-j-\ell}.
% \end{equation}
% \begin{align}
% E_{\ell,n}(\theta_m) &= 2\cos(\tfrac{(12n + j+\ell)\theta_m}{2})(2\cos(\tfrac{\theta_m}{2}))^{-\ell}+ 2\cos(\tfrac{\ell \theta_m}{2})(2\cos(\tfrac{\theta_m}{2}))^{-12n-j-\ell} \\ \notag &+  (2\cos(\tfrac{\theta_m}{2}))^{-12n-j-\ell}(2i\sin(\tfrac{\theta_m}{2}))^{-\ell} (2\cos(\tfrac{\theta_m}{2}))^{-\ell}(2i\sin(\tfrac{\theta_m}{2}))^{-12n-j-\ell} + R_{12n + j + \ell,\ell}
% \end{align}
% Note that for all $\theta_m \in [\tfrac{\pi}{3},  \tfrac{\pi}{2}]$,
% \begin{equation}
% \vert E_{\ell,n}(\theta_m) \vert \leq 2(2^{-6n-\tfrac{j}{2}-\tfrac{\ell}{2}}+2^{-\tfrac{\ell}{2}})+ 2^{-6n-\tfrac{j}{2}-\tfrac{\ell}{2}}+2^{-\tfrac{\ell}{2}} + R_{12n + j + \ell,\ell} \leq 0.048
% \end{equation}
Write $m = 2n + r$, with $r \in (\frac{j}{6}, n + \frac{j}{4}]$, 
% Then
% \begin{equation} 
% M_{k,\ell}(\theta_m) = 2 (-1)^r [1 +  \cos(\tfrac{\ell \theta_m}{2})(2 i \sin(\tfrac{\theta_m}{2}))^{-\ell} \{1 + (-1)^r (2 i \sin(\tfrac{\theta_m}{2}))^{-12n-j} \} ].
% \end{equation}
and define 
\begin{equation}
x = \frac{\pi(r-\frac{j}{6})}{12n + j}.  
\end{equation}
Then with this notation, we have
\begin{equation}
 \theta_m = \frac{2 \pi (2n+r)}{12n+j}  = \frac{\pi}{3} + 2x,
\end{equation}
and so
\begin{equation}
\label{eq:Mellnthetamformula}
M_{k, \ell}(\theta_m) = 2 (-1)^r [1 + \cos(\tfrac{\ell \pi}{6} + \ell x)(2 i \sin(\tfrac{\pi}{6} + x))^{-\ell} \{1 + (-1)^r (2 i \sin(\tfrac{\pi}{6} + x))^{-12n-j} \} ].
\end{equation}

\begin{myprop}
\label{prop:Mksamplepoints}
 For $\theta_m \in (\frac{\pi}{3}, \frac{\pi}{2}]$ and $k \geq \ell \geq 14$, we have
 \begin{equation}
(-1)^m M_{k,\ell}(\theta_m) \geq 
\begin{cases}
1.5, \qquad &\ell \equiv 0 \pmod{6} \\
0.8, \qquad &\ell \equiv 2 \pmod{6} \\
0.31, \qquad & \ell \equiv 4 \pmod{6}.
\end{cases}
\end{equation}
\end{myprop}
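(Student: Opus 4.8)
The plan is to start from the closed form \eqref{eq:Mellnthetamformula} and write $(-1)^m M_{k,\ell}(\theta_m) = 2(1+S)$. Since $m = 2n+r$ gives $(-1)^m = (-1)^r$, and $\ell$ is even so that $(2i\sin(\tfrac\pi6+x))^{-\ell} = (-1)^{\ell/2}(2\sin(\tfrac\pi6+x))^{-\ell}$ is real (and likewise for the exponent $k-\ell$), we obtain
\[
S = (-1)^{\ell/2}\cos(\tfrac{\ell\pi}{6}+\ell x)\,P\,\bigl(1 + (-1)^{r+j/2}Q\bigr),
\]
where $P = (2\sin(\tfrac\pi6+x))^{-\ell} \in (0,1]$ and $Q = (2\sin(\tfrac\pi6+x))^{-(k-\ell)} \in (0,1)$. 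The first step is to absorb the sign factors: writing $\ell = 6q+s$ with $s = \ell \bmod 6 \in \{0,2,4\}$, one has $(-1)^{\ell/2} = (-1)^{q}(-1)^{s/2}$ and $\cos(\tfrac{\ell\pi}6+\ell x) = (-1)^q\cos(\tfrac{s\pi}6+\ell x)$, so the two sources of $(-1)^q$ cancel and the prefactor collapses to $C_s(\ell x) := (-1)^{s/2}\cos(\tfrac{s\pi}6+\ell x)$, depending only on $\ell \bmod 6$. Thus it suffices to bound $1+S$ below with $S = C_s(\ell x)\,P\,(1+\epsilon Q)$, $\epsilon = \pm1$; the stated inequalities amount to $S \geq -\tfrac14$, $S \geq -\tfrac35$, and $S \geq -0.845$ for $s = 0,2,4$ respectively.

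Next I would split the sample points by their distance from the corner $\rho = e^{i\pi/3}$, i.e. by the size of $x \in (0,\tfrac{\pi}{12}]$. In the outer range $x \geq \tfrac{\pi}{2\ell}$ (equivalently $\theta_m \geq \tfrac\pi3+\tfrac\pi\ell$), Corollary \ref{coro:sinBounds} applied with $N=\ell$ gives $P \leq \tfrac18$; since $|C_s|\leq 1$ and $Q \leq 1$, this yields $|S| \leq \tfrac18\cdot 2 = \tfrac14$, hence $1+S \geq \tfrac34$ and $(-1)^m M_{k,\ell}(\theta_m) \geq \tfrac32$. This single estimate already handles all three congruence classes away from $\rho$, so all the real work lies in the inner range $0 < x < \tfrac{\pi}{2\ell}$.

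The crucial input for the inner range is a uniform lower bound on the exponent controlling $Q$: since $x = \tfrac{\pi(r-j/6)}{k-\ell}$ we have $(k-\ell)x = \pi(r - \tfrac j6)$, and checking the least admissible $r$ for each $j$ shows $(k-\ell)x \geq \tfrac\pi3$. Combined with the concavity through the origin of $x \mapsto \log(2\sin(\tfrac\pi6+x))$, which gives a chord lower bound $\log(2\sin(\tfrac\pi6+x)) \geq \beta x$ on the relevant interval, this produces $Q \leq e^{-\beta\pi/3}$ safely below $1$, controlling the bracket $1+\epsilon Q$. With this the three cases separate. For $s=0$, $C_0(\ell x)=\cos(\ell x) \geq 0$ because $\ell x < \tfrac\pi2$, so every factor of $S$ is nonnegative, giving $S \geq 0$ and $(-1)^m M \geq 2$. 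For $s=4$, $C_4 < 0$ throughout, but $|C_4(\ell x)|$ is near its maximum only when $\ell x$ is near $\tfrac\pi3$, where $P$ has already decayed, so the product $|C_4(\ell x)|P$ stays near $\tfrac12$ and the generous threshold $-0.845$ is met with room to spare. For $s=2$, $C_2(\ell x) = -\cos(\tfrac\pi3+\ell x)$ is negative only for $\ell x \in (0,\tfrac\pi6)$, where $|C_2| \leq \tfrac12$, while on the complementary range $C_2 \geq 0$ forces $S \geq 0$.

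The main obstacle is the tight $s=2$ case near the corner. There the worst configuration has $\ell x \to 0$, where $|C_2| \to \tfrac12$ and $P \to 1$ simultaneously, so the target $S \geq -\tfrac35$ leaves essentially no slack against the naive bound $S \geq -\tfrac12(1+Q)$: a chord constant taken over the whole interval $(0,\tfrac{\pi}{2\ell}]$ only yields $Q \lesssim 0.20$ and $-\tfrac12(1+Q) \approx -0.60$, right at the boundary. The remedy is to localize to the subinterval $0 < x < \tfrac{\pi}{6\ell}$ on which $C_2<0$ and use the sharper chord constant valid there (closer to the tangent slope $\sqrt3$), which drives $Q$ down enough that $-\tfrac12(1+Q) > -\tfrac35$ holds uniformly for $\ell \geq 14$. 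Making this quantitative — tracking the joint decay of $P$ and $Q$ as $x\to 0$ so that $|C_2(\ell x)|\,P\,(1+Q)$ stays strictly below $\tfrac35$ — is the one place where the constants must be handled carefully rather than absorbed into crude bounds.
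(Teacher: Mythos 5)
Your proposal is correct, and structurally it is the paper's own proof: the same starting identity \eqref{eq:Mellnthetamformula}, the same collapse of the sign factors into a prefactor depending only on $\ell \bmod 6$ (your $C_0, C_2, C_4$ are exactly the paper's $\cos(\ell x)$, $-\cos(\tfrac{\pi}{3}+\ell x)$, $\cos(\tfrac{2\pi}{3}+\ell x)$), the same inner/outer dichotomy around the corner, and the same analytic engine --- concavity of $h(x)=\log(2\sin(\tfrac{\pi}{6}+x))$ through the origin with chord bounds is precisely the content of Lemma \ref{lemma:sinMonotonic}, and your bound $Q \le e^{-\beta\pi/3}$ via $(k-\ell)x = \pi(r-\tfrac{j}{6}) \ge \tfrac{\pi}{3}$ is the same mechanism as \eqref{eq:sinSystemofInequalities}.

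The differences are in execution and are worth recording. (i) You dispose of the outer range $x \ge \tfrac{\pi}{2\ell}$ for all three residue classes at once via Corollary \ref{coro:sinBounds} ($P \le \tfrac18$, hence $|S| \le \tfrac14$), whereas the paper uses class-dependent splits at $\tfrac{\pi}{6\ell}$ and $\tfrac{\pi}{12\ell}$. (ii) For $\ell \equiv 4 \pmod{6}$ the paper gets its tight constant $0.31$ from Lemma \ref{lemma:TrigEasyEstimates}, namely $|C_4| \le \tfrac{1}{\sqrt{2}}$ near the corner; your joint-decay idea not only works but can be made exact and clean: since $|C_4(u)| = \sin(u+\tfrac{\pi}{6}) = \tfrac12 e^{h(u)}$ for $0<u<\tfrac{\pi}{2}$, concavity gives $|C_4(\ell x)|\,P = \tfrac12 e^{h(\ell x)-\ell h(x)} \le \tfrac12$, whence $|S| \le \tfrac12(1+Q) \le 0.63$, well inside the required $0.845$ (and in fact yielding a better constant than the paper's). (iii) In the genuinely tight case $\ell \equiv 2 \pmod{6}$ you correctly flag the danger and your remedy closes it: on $\ell x < \tfrac{\pi}{6}$ the chord slope over $[0,\tfrac{\pi}{6\ell}]$ exceeds $1.66$ for $\ell \ge 14$, so $Q \le e^{-1.66\pi/3} < 0.18$ and $|S| \le \tfrac12(1.18) < 0.6$; the paper instead quotes the global bound $Q \le 0.193$ of \eqref{eq:sinSystemofInequalities} (whose proof uses $k-\ell \ge 12$), getting $|S| \le 0.597$. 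Both routes work; yours is marginally more robust (it never needs $n \ge 1$) at the cost of a small optimization in case (ii), while the paper's route avoids all calculus near the corner by invoking the pre-packaged trigonometric Lemma \ref{lemma:TrigEasyEstimates}.
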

\begin{proof}
First assume $\ell \equiv 0 \pmod{6}$.  In this case, \eqref{eq:Mellnthetamformula} simplifies as
\begin{equation}
\label{eq:Mellnthetamformulal=0mod6}
(-1)^r M_{k,\ell}(\theta_m) = 2 [1 + \cos( \ell x)( 2\sin(\tfrac{\pi}{6} + x))^{-\ell} \{1 + (-1)^r (2 i \sin(\tfrac{\pi}{6} + x))^{-12n-j} \} ],
\end{equation}
using that $\cos(\frac{\ell \pi}{6} + \ell x) i^{-\ell} = \cos(\ell x)$ for $\ell \equiv 0 \pmod{6}$. 
In case $x \leq \frac{\pi}{2\ell}$, then $0 \leq \ell x \leq \frac{\pi}{2}$, so $\cos(\ell x) \geq 0$.  The term in curly brackets lies in the interval $[0,2]$, so in all we have
$(-1)^r M_{k,\ell}(\theta_m) \geq 2.$  
In case $x > \frac{\pi}{2 \ell}$, then by Corollary \ref{coro:sinBounds}, $(2 \sin(\frac{\pi}{6} + x))^{-\ell} \leq \frac{1}{8}$.  Therefore, 
\begin{equation}
(-1)^r M_{k,\ell}(\theta_m) \geq 2(1-2 (\tfrac{1}{8})) = 1.5.
\end{equation}

Now suppose $\ell \equiv 2 \pmod{6}$. In this case, \eqref{eq:Mellnthetamformula} simplifies as
\begin{equation}
\label{eq:Mellnthetamformulal=2mod6}
(-1)^r M_{k,\ell}(\theta_m) = 2 [1 - \cos( \tfrac{\pi}{3} + \ell x)(2\sin(\tfrac{\pi}{6} + x))^{-\ell} \{1 + (-1)^r (2 i \sin(\tfrac{\pi}{6} + x))^{-12n-j} \} ],
\end{equation}
using that $\cos(\frac{\ell \pi}{6} + \ell x) i^{-\ell} = -\cos(\frac{\pi}{3}+\ell x)$ for $\ell \equiv 2 \pmod{6}$. 
First, suppose $\tfrac{\pi}{3} \leq \theta_m \leq \tfrac{\pi}{3}(1+\tfrac{1}{\ell})$. By Lemma \ref{lemma:TrigEasyEstimates}, $| \cos(\tfrac{\pi}{3}+ \ell x) | \leq \tfrac{1}{2}$. 
% We also have
% \begin{equation}
%  (2 \sin(\tfrac{\theta_m}{2}))^{-12n-j} = 
%  \Big(2 \sin\Big(\frac12 \Big(\frac{\pi}{3} + \frac{\pi(r-\frac{j}{6})}{6n+\frac{j}{2}}\Big)\Big)
%  %\Big(\frac{\frac{\pi}{3} + \frac{\pi(r-\frac{j}{6})}{6n+\frac{j}{2}}}{2}\Big)
%  \Big)^{-6n-\frac{j}{2}}
%  %(2 \sin(\frac{\frac{\pi}{3} + \frac{\pi (r-\frac{j}{6})}{6n+\frac{j}{2}}
% \end{equation}
Furthermore, we claim that %Lemma \ref{lemma:sinMonotonic} implies
\begin{equation}
% (2 \sin(\tfrac{\theta_m}{2}))^{-12n-j} 
(2 \sin(\tfrac{\pi}{6} + x))^{-12n -j }
 \leq 0.193,
\end{equation}
as we now show.
Since $r > \frac{j}{6}$, we have $r - \frac{j}{6} \geq \frac{1}{3}$, and so by Lemma \ref{lemma:sinMonotonic}, we have
\begin{equation}
\label{eq:sinSystemofInequalities}
 %(2 \sin(\tfrac{\theta_m}{2}))^{-12n-j} = 
 (2 \sin(\tfrac{\pi}{6} + \tfrac{\pi(r-\frac{j}{6})}{12n+j}))^{-12n-j} \leq (2 \sin(\tfrac{\pi}{6}+\tfrac{\pi/3}{12n+j}))^{-12n-j} \leq (2 \sin(\tfrac{\pi}{6}+\tfrac{\pi/3}{12}))^{-12} \leq 0.192\dots,
\end{equation}
as claimed.  It is useful to note that the above proof of \eqref{eq:sinSystemofInequalities} did not use that $\ell \equiv 2 \pmod{6}$, so we may use this bound again for the other residue classes of $\ell$.
Inserting these bounds into \eqref{eq:Mellnthetamformulal=2mod6}, we derive
\begin{equation}
\label{eq:Mkellthetamboundl=2mod6}
 (-1)^r M_{k,\ell}(\theta_m) \geq 2[1- \tfrac12 \{1 + 0.193 \}] \geq 0.8.
\end{equation}
We finish the case $\ell \equiv 2 \pmod{6}$ by considering the values of $m$ such that
$\tfrac{\pi}{3}(1+ \tfrac{1}{\ell}) \leq \theta_m \leq \tfrac{\pi}{2}$.  The bound \eqref{eq:sinSystemofInequalities} remains valid.  Lemma \ref{lemma:sinMonotonic} gives
\begin{equation}
% (2 \sin(\tfrac{\theta_m}{2}))^{-\ell} 
(2 \sin(\tfrac{\pi}{6} + x))^{-\ell}
 \leq (2 \sin(\tfrac{\pi}{6} + \tfrac{\pi}{6 \ell}))^{-\ell} \leq (2 \sin(\tfrac{\pi}{6} + \tfrac{\pi}{48}))^{-8} = 0.43\dots,
\end{equation}
which leads to an even better bound than \eqref{eq:Mkellthetamboundl=2mod6}.

Finally, consider the case $\ell \equiv 4 \pmod{6}$.
In this case, \eqref{eq:Mellnthetamformula} simplifies as
\begin{equation}
M_{k,\ell}(\theta_m) = 2(-1)^r[1+\cos(\tfrac{2\pi}{3}+\ell x)(2\sin(\tfrac{\pi}{6}+x))^{-\ell} \{1 + (-1)^r (2 i \sin(\tfrac{\pi}{6} + x))^{-12n-j} \} ],
\end{equation}
using that $\cos(\frac{\ell \pi}{6} + \ell x)i^{-\ell} = \cos(\frac{2\pi}{3}+\ell x)$ for $\ell \equiv 4 \pmod{6}$.
First, suppose $\tfrac{\pi}{3} \leq \theta_m \leq \tfrac{\pi}{3}(1+\tfrac{1}{2\ell})$. 
Then by Lemma \ref{lemma:TrigEasyEstimates}, $\vert \cos(\tfrac{2\pi}{3}+\ell x) \vert \leq \frac{1}{\sqrt{2}}$. 
By 
%a calculation similar to 
\eqref{eq:sinSystemofInequalities},
% , we have
% \begin{equation}
%  %(2 \sin(\tfrac{\theta_m}{2}))^{-12n-j} = 
%  (2 \sin(\tfrac{\pi}{6} + \tfrac{\pi(r-\frac{j}{6})}{12n+j}))^{-12n-j} \leq (2 \sin(\tfrac{\pi}{6}+\tfrac{\pi/3}{12n+j}))^{-12n-j} \leq (2 \sin(\tfrac{\pi}{6}+\tfrac{\pi/3}{16}))^{-16} \leq 0.185\dots.
% \end{equation}
in all, we have 
\begin{equation}
\label{eq:Mkelllowerbound}
(-1)^r M_{k,\ell}(\theta_m) \geq 2[1 - \tfrac{1}{\sqrt{2}}\{1+0.193\}] = 0.31\dots.
\end{equation}
Now suppose $\tfrac{\pi}{3}(1+\tfrac{1}{2\ell}) \leq \theta_m \leq \tfrac{\pi}{2}$. Then by Lemma \ref{lemma:sinMonotonic}, we have
\begin{equation}
(2\sin(\tfrac{\theta_m}{2}))^{-\ell} 
\leq (2\sin(\tfrac{\pi}{6} + \tfrac{\pi}{12 \ell}))^{-\ell} 
\leq (2\sin(\tfrac{\pi}{6} + \tfrac{\pi}{48}))^{-4}
\leq 0.656\dots < 2^{-1/2}.
\end{equation}
% \red{technically this case isn't stated in the lemma, so we should probably add it}. 
Therefore, for $\theta_m$ in this range we have an even better bound than \eqref{eq:Mkelllowerbound}.
\end{proof}

Since the error term in \eqref{eq:mathcalEerror} is smaller than the lower bound in Proposition \ref{prop:Mksamplepoints}, we may definitively conclude that $F_k F_{\ell} - F_{k+\ell}$ alternates signs at the values of $\theta_m \in (\frac{\pi}{3}, \frac{\pi}{2}]$.  By counting the number of such sample points, as described in Section \ref{section:samplepointsAkl}, we finish the proof of Proposition \ref{prop:Akllowerbound}.
% \begin{mycoro}
% \label{coro:Akllowerbound}
%  Let $k-\ell = 12n + j$ with $n \geq 1$ and $j \in \{0,2,4,6,8,10\}$.  Then $\Delta_{k,\ell}$ has at least $N(k, \ell)$ zeros on the arc $\{ z=e^{i \theta}: \frac{\pi}{3} < \theta < \frac{\pi}{2} \}$,  where
%  \begin{equation}
%   N(k,\ell) = \begin{cases} n-1, \qquad &j = 0, 2, 6 \\
%                n, \qquad &j = 4, 8, 10.
%               \end{cases}
%  \end{equation}
% \end{mycoro}
% Now we reproduce the table with the results of this Corollary filled in
% \begin{equation}
%    \begin{array}{c|c|c|c}
%     j & k \text{ bound} & A_{k,\ell} \text{ lower bound} & \text{Corollary } \ref{coro:Akllowerbound} \\
%     \hline
%     0 & ? & \begin{cases} n, &\ell \equiv 0,2 \shortmod{6} \\ n-1, &\ell \equiv 4 \shortmod{6} \end{cases} & n-1 \\
%     \hline
%     2 & ? & \begin{cases} n-1, &\ell \equiv 0,4 \shortmod{6} \\ n, &\ell \equiv 2 \shortmod{6} \end{cases} & n-1 \\
%     \hline 
%     4 & ? & n & n \\
%     \hline
%     6 & ? & \begin{cases} n, &\ell \equiv 0,2 \shortmod{6} \\ n-1, &\ell \equiv 4 \shortmod{6} \end{cases} & n-1 \\
%     \hline
%     8 & ? & \begin{cases} n, &\ell \equiv 0,4 \shortmod{6} \\ n+1, &\ell \equiv 2 \shortmod{6} \end{cases} & n \\
%     \hline 
%     10 & ? & n& n
%    \end{array} 
%  \end{equation}
% 
% We see from this table, that we are missing at most one zero, and only for a few cases.
%  
\subsection{The extra zero}
Our task in this section is to prove Theorem \ref{thm:Akvalue}.  By comparison with Proposition \ref{prop:Akllowerbound}, we need to produce one additional zero when $\ell \equiv 0 \pmod{6}$ and $j=0,6$, and when $\ell \equiv 2 \pmod{6}$ and $j=0,2,6,8$.

\begin{myprop}
\label{prop:ExtraZerol=0mod6}
 Suppose $\ell \equiv 0 \pmod{6}$, $\ell \geq 14$, and $k - \ell = 12n + j$, with $n \geq 1$.  For $j=0,6$, we have that $A_{k,\ell} \geq n$.
\end{myprop}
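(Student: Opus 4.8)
The plan is to recycle the sign-change count already obtained in the proof of Proposition~\ref{prop:Akllowerbound} and to manufacture exactly one extra sign change, located in the interval between the corner $\theta = \tfrac{\pi}{3}$ and the first sample point. Recall that for $\ell \equiv 0 \pmod 6$ and $j \in \{0,6\}$ there are $n$ sample points $\theta_m = \tfrac{2m\pi}{k-\ell}$ with $m = 2n+r$, namely $r = 1,\dots,n$ when $j=0$ and $r = 2,\dots,n+1$ when $j=6$. By Proposition~\ref{prop:Mksamplepoints} together with the error bound \eqref{eq:mathcalEerror}, the function $F_k F_\ell - F_{k+\ell}$ has sign $(-1)^r$ at $\theta_m$ and magnitude at least $1.5 - 0.091 > 0$, so it alternates sign along these points and produces $n-1$ zeros in the open gaps between consecutive sample points. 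To reach $A_{k,\ell} \geq n$ I would exhibit one further zero in $(\tfrac{\pi}{3}, \theta_{\min})$, where $\theta_{\min}$ is the smallest sample point.

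The crux is evaluating the main term $M_{k,\ell}$ from \eqref{eq:MkellDef} at the corner $\theta = \tfrac{\pi}{3}$. Here the secondary terms do \emph{not} decay, since $2\sin(\tfrac{\pi}{6}) = 1$, so one must carefully track the powers of $i$ and the resulting signs. Writing $\ell = 6\ell_0$ and $k = \ell + 12n + j$, one has $(2i\sin(\tfrac{\pi}{6}))^{-\ell} = i^{-\ell} = (-1)^{\ell/2}$ and $(2i\sin(\tfrac{\pi}{6}))^{-k} = (-1)^{k/2}$. A short computation of the three terms in \eqref{eq:MkellDef}, using $\cos(\tfrac{(k-\ell)\pi}{6}) = \cos(\tfrac{j\pi}{6})$, $\cos(\tfrac{k\pi}{6}) = (-1)^{\ell_0}\cos(\tfrac{j\pi}{6})$, and $\cos(\tfrac{\ell\pi}{6}) = (-1)^{\ell_0}$, shows that the identities $(-1)^{\ell_0}(-1)^{\ell/2} = 1$ and $(-1)^{\ell_0}(-1)^{k/2} = (-1)^{j/2}$ (both forced by $\ell \equiv 0 \pmod 6$) collapse the expression to
\begin{equation}
M_{k,\ell}(\tfrac{\pi}{3}) = 4\cos(\tfrac{j\pi}{6}) + 2(-1)^{j/2},
\end{equation}
which equals $6$ when $j=0$ and $-6$ when $j=6$, independently of $n$ and $\ell$.

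Since $|\mathcal{E}_{k,\ell}| \leq 0.091 \ll 6$ by Lemma~\ref{lemma:FkapproxOnArc}, the function $(F_k F_\ell - F_{k+\ell})(\tfrac{\pi}{3})$ is strictly positive for $j=0$ and strictly negative for $j=6$ (in particular $\rho$ is not a zero, consistent with $v_\rho \equiv 0 \pmod 3$). On the other hand, the sign at $\theta_{\min}$ is $(-1)^r$, which is negative for $j=0$ (where $r=1$) and positive for $j=6$ (where $r=2$) — opposite to the corner sign in both cases. By the intermediate value theorem there is a zero of $F_k F_\ell - F_{k+\ell}$ strictly inside $(\tfrac{\pi}{3}, \theta_{\min})$, disjoint from the $n-1$ zeros already located, so $A_{k,\ell} \geq (n-1) + 1 = n$.

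I expect the main obstacle to be the corner evaluation itself: because the secondary terms survive at $\theta = \tfrac{\pi}{3}$, the sign there arises from genuine interference of three comparable contributions, and getting the signs exactly right requires keeping careful track of $(-1)^{\ell/2}$, $(-1)^{\ell_0}$, and $(-1)^{k/2}$. The saving grace is that the hypothesis $\ell \equiv 0 \pmod 6$ makes all of these quantities align so that the cosine bracket doubles rather than cancels, yielding the clean value $\pm 6$; once that computation is in hand, the comparison with Proposition~\ref{prop:Mksamplepoints} and the conclusion are routine.
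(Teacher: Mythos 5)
Your proof is correct and follows essentially the same route as the paper: count the $n-1$ alternations among the sample points, then evaluate $M_{k,\ell}(\tfrac{\pi}{3})$ directly (the paper also gets $6$ for $j=0$ and $-6$ for $j=6$) and compare against the sign $\mp 1.5$ at the nearest sample point $\theta_{2n+1}$ or $\theta_{2n+2}$ to extract one more sign change. Your unified formula $M_{k,\ell}(\tfrac{\pi}{3}) = 4\cos(\tfrac{j\pi}{6}) + 2(-1)^{j/2}$ is just a slightly more systematic packaging of the paper's two case-by-case evaluations.
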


\begin{myprop}
\label{prop:ExtraZerol=2mod6}
 Suppose $\ell \equiv 2 \pmod{6}$, $\ell \geq 14$, and $k - \ell = 12n + j$, with $n \geq 1$.  For $j=0,2,6$, we have that $A_{k,\ell} \geq n$.  In addition, for $j=8$, we have that $A_{k,\ell} \geq n+1$.
\end{myprop}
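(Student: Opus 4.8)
The plan is to augment the sign changes of $g := F_k F_\ell - F_{k+\ell}$ already produced among the sample points in Section \ref{section:samplepointsAkl} with one additional sign change located in the leftmost gap $(\tfrac{\pi}{3}, \theta_{m_0})$, between the corner $\rho = e^{i\pi/3}$ and the first sample point $\theta_{m_0}$ (where $m_0 = 2n + r_0$, with $r_0$ the least integer exceeding $j/6$; thus $r_0 = 1$ for $j = 0, 2$ and $r_0 = 2$ for $j = 6, 8$). Recall from Proposition \ref{prop:Mksamplepoints} that $\mathrm{sign}\, g(\theta_{m_0}) = (-1)^{r_0}$, which is $-1$ for $j = 0, 2$ and $+1$ for $j = 6, 8$. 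So it suffices to exhibit a point $\theta^\ast \in (\tfrac{\pi}{3}, \theta_{m_0})$ at which $g(\theta^\ast)$ has the opposite sign; since $g = M_{k,\ell} + \mathcal{E}_{k,\ell}$ with $|\mathcal{E}_{k,\ell}| \le 0.091$ by \eqref{eq:mathcalEerror}, it is enough to make $|M_{k,\ell}(\theta^\ast)| > 0.091$ with the correct sign.

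First I would dispatch $j = 2$ and $j = 8$ by testing the corner itself, $\theta^\ast = \tfrac{\pi}{3}$. Since $2\sin\tfrac{\pi}{6} = 1$, the two decaying factors in \eqref{eq:MkellDef} collapse to the pure phases $i^{-\ell}$ and $i^{-k}$, and a short computation using $\ell \equiv 2 \pmod 6$ and $k = \ell + 12n + j$ reduces $M_{k,\ell}(\tfrac{\pi}{3})$ to a constant depending only on $j \bmod 12$; one finds $M_{k,\ell}(\tfrac{\pi}{3}) = 2\cos\tfrac{j\pi}{6} - 2\cos\tfrac{(j+2)\pi}{6} - (-1)^{j/2}$, which equals $3$ for $j = 2$, equals $-3$ for $j = 8$, and vanishes for $j = 0, 6$. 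For $j = 2, 8$ these values dominate the error bound, so $g(\tfrac{\pi}{3})$ has sign opposite to $(-1)^{r_0}$, producing the extra sign change; for $j = 8$ this is the $(n+1)$-th zero, as claimed.

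The main obstacle is the degenerate cases $j = 0, 6$, where the three terms of $M_{k,\ell}(\tfrac{\pi}{3})$ cancel and the corner gives no information. Here I would instead test the auxiliary point $\theta^\ast = \tfrac{\pi}{3} + \tfrac{\pi}{3k}$, chosen so that the $(2\sin\tfrac{\theta}{2})^{-\ell}$-term of $M_{k,\ell}$ is annihilated: writing $\theta^\ast = \tfrac{\pi}{3} + 2x^\ast$ with $x^\ast = \tfrac{\pi}{6k}$, its cosine factor becomes $\cos(\tfrac{\pi}{3} + k x^\ast) = \cos\tfrac{\pi}{2} = 0$. At $\theta^\ast$ the leading term $2\cos(\tfrac{(k-\ell)\theta^\ast}{2})$ is bounded away from $0$: the argument $\tfrac{(k-\ell)\theta^\ast}{2}$ differs from an integer multiple of $\pi$ by $\tfrac{k-\ell}{2}(\theta^\ast - \tfrac{\pi}{3}) = \tfrac{(k-\ell)\pi}{6k} \le \tfrac{\pi}{6}$ (as $k - \ell \le k$), so this term has absolute value at least $2\cos\tfrac{\pi}{6} = \sqrt{3}$, and its sign matches that of the leading term at the corner (positive for $j = 0$, negative for $j = 6$). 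The surviving $(2\sin\tfrac{\theta}{2})^{-k}$-term is controlled by Corollary \ref{coro:sinBounds} applied with $N = 3k$: since $\theta^\ast = \tfrac{\pi}{3} + \tfrac{\pi}{3k}$, we get $(2\sin\tfrac{\theta^\ast}{2})^{-3k} \le \tfrac{1}{8}$, hence $(2\sin\tfrac{\theta^\ast}{2})^{-k} \le (\tfrac{1}{8})^{1/3} = \tfrac{1}{2}$, so this term contributes at most $\tfrac{1}{2}$ in absolute value. Consequently $M_{k,\ell}(\theta^\ast) \ge \sqrt{3} - \tfrac{1}{2} > 0.091$ for $j = 0$ and $M_{k,\ell}(\theta^\ast) \le -\sqrt{3} + \tfrac{1}{2} < -0.091$ for $j = 6$, the sign again being opposite to $(-1)^{r_0}$.

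Finally I would confirm that $\theta^\ast$ lies in the open gap: $\theta^\ast < \theta_{m_0}$ reduces to $2n + r_0 < k$ (clear from $k \ge \ell + 12n$ and $\ell \ge 14$), while $\theta^\ast > \tfrac{\pi}{3}$, so the new zero is strictly interior to the arc, distinct from $\rho$ and from the sample-point zeros, which all lie to the right of $\theta_{m_0}$. Counting: the $n-1$ inter-sample-point changes plus this extra zero give $A_{k,\ell} \ge n$ for $j = 0, 2, 6$, while the $n$ changes for $j = 8$ plus the extra zero give $A_{k,\ell} \ge n+1$. The step I expect to require the most care in a full write-up is the rigorous estimation at $\theta^\ast$ in the degenerate cases $j = 0, 6$ — in particular the $(2\sin\tfrac{\theta}{2})^{-k} \le \tfrac{1}{2}$ bound and the placement $\theta^\ast < \theta_{m_0}$, both needed uniformly in $k, \ell, n$.
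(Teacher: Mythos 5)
Your proof is correct, and for $j=2,8$ it coincides with the paper's: both evaluate $M_{k,\ell}$ at the corner $\theta = \pi/3$, where the decaying factors collapse to pure phases and the value is $\pm 3$, overwhelming the error bound \eqref{eq:mathcalEerror}. The divergence is in the degenerate cases $j=0,6$. The paper tests the point $\phi = \tfrac{\pi}{3} + \tfrac{\pi}{4(12n+j)}$, where the leading term is $2(-1)^{j/6}\cos(\tfrac{\pi}{8}) \approx \pm 1.85$, and must then control \emph{both} secondary terms of \eqref{eq:MkellDef}; this forces a case split according to whether $\ell \leq \tfrac43(12n+j)$ or not, invokes Lemmas \ref{lemma:TrigEasyEstimates} and \ref{lemma:sinMonotonic} several times, and ends with the slim margin $0.17 > 0.091$. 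Your point $\theta^\ast = \tfrac{\pi}{3} + \tfrac{\pi}{3k}$ instead exploits the congruence $k \equiv 2 \pmod{6}$ (forced by $\ell \equiv 2$ and $j \equiv 0 \pmod{6}$) to annihilate the $(2i\sin\tfrac{\theta}{2})^{-\ell}$ term exactly, so only the $(2i\sin\tfrac{\theta}{2})^{-k}$ term needs bounding, which Corollary \ref{coro:sinBounds} with $N=3k$ does in one line; the resulting margin is much more comfortable, no case analysis on the relative sizes of $\ell$ and $k-\ell$ is needed, and the sign bookkeeping ($+$ for $j=0$, $-$ for $j=6$) matches what is required against the nearest sample point. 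One small correction: the surviving term is $2\cos(\tfrac{\ell\theta^\ast}{2})\,(2i\sin(\tfrac{\theta^\ast}{2}))^{-k}$, whose absolute value is at most $2 \cdot \tfrac12 = 1$, not $\tfrac12$ --- you dropped the factor $2$ in front of the cosine; this changes your lower bound from $\sqrt{3} - \tfrac12$ to $\sqrt{3} - 1 = 0.73\dots$, which still exceeds $0.091$ by a wide margin, so the conclusion stands.
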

These two propositions provide the extra zero not counted by Proposition \ref{prop:Akllowerbound}.  Notice that there is essentially no condition on the size of $k$, although it is required that $\ell \geq 14$.

%When $\ell \equiv 4 \pmod{6}$, then Theorem \ref{thm:Akllowerbound} might appear to be an equality, since we .  The interesting feature in this case, is that for smaller values of $k$, it may be that $A_{k,\ell} = N_{k,\ell} + 1$.  That is, the lower bound in Theorem \ref{thm:Akllowerbound} is \emph{sharp} only for sufficiently large values of $k$.

\begin{proof}[Proof of Proposition \ref{prop:ExtraZerol=0mod6}]
We will show there is one additional sign change near $\theta = \pi/3$.

First, suppose $j=0$, so $k-\ell = 12n$.  
Consulting the proof of Proposition \ref{prop:Mksamplepoints}, we see that the sample point closest to $\pi/3$ is $\theta_m$ with $m=2n+1$, in which case Proposition \ref{prop:Mksamplepoints} gives $M_{k,\ell}(\theta_{2n+1}) \leq -1.5$.  Meanwhile, directly evaluating $M_{k,\ell}(\pi/3)$ using
 \eqref{eq:MkellDef}, we have
\begin{equation}
M_{k,\ell}(\tfrac{\pi}{3}) = 2\cos(2n\pi)+2\cos(2n\pi+\tfrac{\ell\pi}{6})(2i\sin(\tfrac{\pi}{6}))^{-\ell} + 2\cos(\tfrac{\ell\pi}{6})(2i\sin(\tfrac{\pi}{6}))^{-12n-\ell} = 6.
\end{equation}
Therefore $F_kF_{\ell}-F_{k + \ell}$ changes sign at least one more time between $\tfrac{\pi}{3}$ and $\theta_{2n+1} = \tfrac{\pi}{3} + \tfrac{\pi}{6n}$.

A similar argument works for $j=6$. In this case, the sample point nearest to $\pi/3$ is $m=2n+2$, so $M_{k,\ell}(\theta_{2n+2}) \geq 1.5$.  On the other hand, direct evaluation shows $M_{k,\ell}(\frac{\pi}{3}) = -6$, and so there is one more sign change in this case also.
% Note that for $\theta = \tfrac{\pi}{3}$, plugging into \eqref{eq:Mellnlesspsecific} we have
% \begin{equation}
% M_{\ell,n}(\tfrac{\pi}{3}) = 2\cos(2n\pi + \pi)+2\cos(2n\pi+ \pi + \tfrac{\ell\pi}{6})(2i\sin(\tfrac{\pi}{6}))^{-\ell} + 2\cos(\tfrac{\ell\pi}{6})(2i\sin(\tfrac{\pi}{6}))^{-12n-6-\ell} = -6
% \end{equation}
%for all $\ell \equiv 0 \pmod{6}$. 
% Therefore $F_kF_{\ell}-F_{k + \ell}$ changes sign at least one more time between $\theta = \tfrac{\pi}{3}$ and $\theta = \tfrac{\pi}{3} + \tfrac{\pi}{6n+3}$. 
%  
\end{proof}

\begin{proof}[Proof of Proposition \ref{prop:ExtraZerol=2mod6}]
First, suppose $j=0$ or $6$.  Recall from Section \ref{section:samplepointsAkl} that when $j=0$, the sample point nearest to $\pi/3$ is $\theta_{2n+1} = \frac{\pi}{3} + \frac{\pi}{6n}$, giving $M_{k,\ell}(\theta_{2n+1}) \leq -0.8$.  For $j=6$, the nearest point is  $\theta_{2n+2} = \frac{\pi}{3} + \frac{\pi}{6n+3}$, and we have $M_{k,\ell}(\theta_{2n+2}) \geq 0.8$. %We shall find a point between $\pi/3$ and $\theta_{2n+1}$ where $M_{k,\ell} > ?$, thus exhibiting another sign change of $F_k F_\ell - F_{k+\ell}$.

Let $\phi = \tfrac{\pi}{3} + \tfrac{\pi}{4(12n+j)}$.  We have
\begin{equation}
2\cos(\tfrac{(k-\ell)\phi}{2}) %= 2\cos(6n\phi) 
= 2\cos(\tfrac{\pi j}{6} + \tfrac{\pi}{8}) = 2 (-1)^{j/6} \cos( \tfrac{\pi}{8}),
\end{equation}
\begin{equation}
2\cos(\tfrac{k\phi}{2}) %= 2\cos(6n\phi + \tfrac{\ell \phi}{2}) 
= 2(-1)^{j/6} \cos(\tfrac{\pi}{8}+ \tfrac{\ell \pi}{6} + \tfrac{\ell \pi}{8(12n+j)}), %=-2\cos(\tfrac{11\pi}{24}+\tfrac{\ell \pi}{96n})
\end{equation}
and
\begin{equation}
2\cos(\tfrac{\ell \phi}{2}) = 2 \cos(\tfrac{\ell \pi}{6} + \tfrac{\ell \pi}{8(12n+j)}).
%= -2\cos(\tfrac{\pi}{3}+\tfrac{\ell \pi}{96n}).
\end{equation}
By \eqref{eq:MkellDef}, we have
\begin{multline}
 |M_{k,\ell}(\phi) - 2 (-1)^{j/6} \cos(\tfrac{\pi}{8}) | \leq |2\cos(\tfrac{\pi}{8}+ \tfrac{\ell \pi}{6} + \tfrac{\ell \pi}{8(12n+j)})| (2 \sin(\tfrac{\phi}{2}))^{-\ell}
 \\
 + |2 \cos(\tfrac{\ell \pi}{6} + \tfrac{\ell \pi}{8(12n+j)})| 
 (2 \sin(\tfrac{\phi}{2}))^{-k}.
\end{multline}
As usual, we further subdivide into cases.  If $\tfrac{\pi}{3} \leq \phi \leq \tfrac{\pi}{3}(1+\tfrac{1}{\ell})$, or equivalently, 
$\ell \leq \frac43(12n + j)$,
then Lemma \ref{lemma:TrigEasyEstimates} implies $|\cos(\tfrac{\ell \phi}{2})| \leq \frac12$.  Furthermore, by Lemma \ref{lemma:sinMonotonic}, we have
\begin{equation}
 (2 \sin(\tfrac{\phi}{2}))^{-k} \leq (2 \sin(\tfrac{\pi}{6} + \tfrac{\pi}{8(12n+j)}))^{-12n-j} \leq (2 \sin(\tfrac{\pi}{6} + \tfrac{\pi}{96}))^{-12} = 0.519\dots.
\end{equation}
We also have that
\begin{equation}
 |\cos(\tfrac{k \phi}{2})| = |\cos(\tfrac{11 \pi}{24} + \tfrac{\ell \pi}{8(12n+j)})| \leq \sin(\tfrac{\pi}{8}) = 0.382\dots,
\end{equation}
which used $\frac{\ell \pi}{6} \equiv \frac{\pi}{3} \pmod{\pi}$, and $\frac{\ell \pi}{8(12n+j)} \leq \frac{\pi}{6}$.  Therefore, we conclude that
\begin{equation}
 |M_{k,\ell}(\phi) - 2 \cos(\tfrac{\pi}{8})| \leq 2 (0.383) + 0.520 \leq 1.286 \dots.
\end{equation}
In the complementary case where $\phi \geq \frac{\pi}{3}(1 + \frac{1}{\ell})$, equivalently, $\ell \geq \frac43(12n+j)$, then by Lemma \ref{lemma:sinMonotonic}, we have
\begin{equation}
 (2 \sin(\tfrac{\phi}{2}))^{-\ell} = [(2 \sin(\tfrac{\pi}{6} + \tfrac{\pi}{8(12n+j)}))^{-12n+j}]^{\frac{\ell}{12n+j}} \leq [(2 \sin(\tfrac{\pi}{6} + \tfrac{\pi}{96}))^{-12}]^{\frac43} = 0.417\dots.
\end{equation}
We then derive
\begin{equation}
  |M_{k,\ell}(\phi) - 2 \cos(\tfrac{\pi}{8})| \leq 4 (0.417 \dots) = 1.66 \dots.
\end{equation}
Since $2 \cos(\tfrac{\pi}{8}) = 1.847\dots$, we conclude that $M_{k,\ell}(\phi) \geq 1.84 - 1.67  = 0.17 $, which by comparison to \eqref{eq:mathcalEerror} is large enough to conclude that $F_k F_{\ell} - F_{k+\ell} > 0$ at $\phi$.

Finally, suppose $j=2$, so $k - \ell = 12n +2$.  The sample point closest to $\pi/3$ is $\theta_{2n+1}$, whereby $M_{k,\ell}(\theta_{2n+1}) \leq -0.8$.
By evaluating \eqref{eq:MkellDef} at $\pi/3$, we have
\begin{equation}
M_{k,\ell}(\tfrac{\pi}{3}) = 1 - 2\cos(\tfrac{2\pi}{3})(i^{\ell})(i^{-\ell}) +2\cos(\tfrac{\pi}{3})(i^{\ell})(i^{-\ell}) = 3,
\end{equation}
so this gives us one additional sign change.  If $j=8$, the calculation is similar to $j=2$, except the nearest sample point is $\theta_{2n+2}$, whereby $M_{k,\ell}(\theta_{2n+2}) \geq 0.8$, but then $M_{k,\ell}(\frac{\pi}{3}) = -3$, so the conclusion is the same as for $j=2$.
\end{proof}

\subsection{The case $n=0$}
When $n=0$, then Proposition \ref{prop:nozerosonarc} claims that $A_{k,\ell} = 0$ unless $j=8$, in which case $A_{k,\ell} = 1$.  Happily, all we need to prove about $A_{k,\ell}$ is that $A_{k,\ell} \geq 1$ when $k-\ell = 8$.  The necessary results on $B_{k,\ell}$ required for Proposition \ref{prop:nozerosonarc} appear in Section \ref{section:Bkl} (these results are not sensitive to the value of $k-\ell$, so there is no special consideration of the case $n=0$ for $B_{k,\ell}$).

Taking this for granted for a moment, let us pause to see how this implies Proposition \ref{prop:nozerosonarc}.  Using the lower bound from Theorem \ref{thm:Bkvalue} in the cases where $\stabp_j(\ell) = \ell$, which are all the cases but those given in \eqref{eq:stabpdef}, and otherwise using Proposition \ref{prop:BklLowerBoundAsymptotic}, we obtain the following table of lower bounds on $B_{k,\ell}$:
\begin{equation}
 \label{eq:BklTable}
\begin{array}{c|c|c|c}
j \diagdown \ell \mymod{6} & 0 & 2 & 4 \\
\hline
0 & \lfloor \frac{\ell}{6} \rfloor - 1 & \lfloor \frac{\ell}{6} \rfloor - 1 & \lfloor \frac{\ell}{6} \rfloor - 1 \\
2 &  \lfloor \frac{\ell}{6} \rfloor - 2 &  \lfloor \frac{\ell}{6} \rfloor - 1 &  \lfloor \frac{\ell}{6} \rfloor - 1 \\
4 &  \lfloor \frac{\ell}{6} \rfloor - 1 &  \lfloor \frac{\ell}{6} \rfloor - 1 &  \lfloor \frac{\ell}{6} \rfloor  \\
6 &  \lfloor \frac{\ell}{6} \rfloor - 1 &  \lfloor \frac{\ell}{6} \rfloor  -1 &  \lfloor \frac{\ell}{6} \rfloor - 1 \\
8 &  \lfloor \frac{\ell}{6} \rfloor - 2 &  \lfloor \frac{\ell}{6} \rfloor -1  &  \lfloor \frac{\ell}{6} \rfloor -1 \\
10 &  \lfloor \frac{\ell}{6} \rfloor - 1 &  \lfloor \frac{\ell}{6} \rfloor - 1 &  \lfloor \frac{\ell}{6} \rfloor
\end{array}.
\end{equation}
This agrees with the table in \eqref{eq:valenceTable}, when $n=0$, except that in the row $j=8$, the lower bound on $B_{k,\ell}$ in \eqref{eq:BklTable} is one less than in \eqref{eq:valenceTable}.  Using $A_{k,\ell} \geq 1$ for $j=8$, then shows that the table in \eqref{eq:valenceTable} holds also for $n=0$, and thus shows Proposition \ref{prop:nozerosonarc}.

Now we proceed to show the existence of this zero.  Lemma \ref{lemma:FkapproxOnArc} holds when $n=0$, for $\ell \geq 14$.  We have
\begin{equation}
 M_{\ell+8,\ell}(\theta) = 2\cos(4 \theta) +2\cos(4 \theta + \tfrac{\ell \theta}{2})(2i\sin(\tfrac{\theta}{2}))^{-\ell} + 2\cos(\tfrac{\ell \theta}{2})(2i\sin(\tfrac{\theta}{2}))^{-\ell - 8}.
\end{equation}
Note that
\begin{equation}
 |M_{\ell+8,\ell}(\tfrac{\pi}{2}) -2| \leq 2^{2-\frac{\ell}{2}},
\end{equation}
so we need to find a negative value of $M_{\ell+8,\ell}(\theta)$.
When $\ell \equiv 2 \pmod{6}$, we have
$ M_{\ell+8,\ell}(\pi/3) = -3$, which easily gives the desired sign change.
We need to work a little harder for the other congruence classes.

We shall choose $\phi_{\ell} = \frac{\pi}{3} + \frac{\pi}{2\ell}$ as a sample point.  We have
\begin{equation}
 2 \cos(4 \phi_{\ell}) = -1 + O(\ell^{-1}), %\qquad 2 \cos(4 \phi_{\ell} + \tfrac{\ell \phi_{\ell}}{2}) = -2 \cos(\tfrac{4 \pi}{3} + \tfrac{\ell \pi}{6}) + O(\ell^{-1}) = \pm 2 + O(\ell^{-1}),
\end{equation}
and by Lemma \ref{lemma:sinMonotonic},
\begin{equation}
 (2 \sin(\phi_{\ell}/2))^{-\ell} = (2 \sin(\tfrac{\pi}{6} + \tfrac{\pi}{4 \ell}))^{-\ell} \leq (2 \sin(\tfrac{\pi}{6} + \tfrac{\pi}{56}))^{-14} < 0.3.
\end{equation}
Therefore, for $\ell$ large enough, this gives another sign change.

\section{Approximation formulas for the Eisenstein series}
In this section we derive some formulas that will aid our understanding of $E_k$ along the line $x=1/2$.  Our mindset has been to obtain asymptotic approximations and to avoid estimates with explicit constants, in order to simplify the presentation.  One basic feature is that $E_k$ is close to $1$, so we need to focus on the behavior of $E_k - 1$.  Actually, it is helpful to re-scale, and instead look at 
\begin{equation}
\label{eq:GandHdefinitions}
G_k(z) := z^k(E_k(z) - 1),\qquad \text{and} \qquad H_k(z):=|z|^k (E_k(z) - 1).  
\end{equation}
Observe that $G_k(e^{i\theta}) = F_k(\theta)$.
We summarize the main results to be proved in this section with the following
\begin{mytheo}
\label{thm:EisensteinApproxTheorem}
Suppose that $z \in \mathcal{F}$.  If $y \leq k^{2/5}$, then 
\begin{equation}
\label{eq:EisensteinApproxTheoremSmally}
G_k(z) = 1 + \frac{z^k}{(z-1)^k} + \frac{z^k}{(z+1)^k} + O(\exp(-k^{1/6})).
\end{equation}
If $k^{2/5} < y \ll k^{2/3}$, then 
with the definition
\begin{equation}
\label{eq:rdef}
r = \frac{2 \pi y^2}{k},
\end{equation}
and with $\theta(z, \tau)$ denoting the Jacobi theta function as in \eqref{eq:JacobithetaDefinition}, 
we have
\begin{equation}
\label{eq:EisensteinApproxTheoremLargey}
G_k(z) =  \theta\Big(\frac{k}{2\pi y} + \frac{ix}{r}, \frac{i}{r}\Big) + O\Big(\frac{y}{k^{2/3}}\Big).
\end{equation}
% Equivalently, in the same range of $y$, we have
% \begin{equation}
% \label{eq:EisensteinApproxTheoremLargesty}
% z^k (E_k(z) - 1) =  r^{1/2} \exp(-\frac{ikx}{y}) \exp(-\frac{\pi}{r}(y^2-x^2))
% \theta\Big(-x + iy, ir \Big) + O\Big(\frac{y}{k^{2/3}}\Big).
% \end{equation}
\end{mytheo}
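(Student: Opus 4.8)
The plan is to begin by splitting the defining sum \eqref{eq:EkczdDefinition} according to the value of $c$. The terms with $c=0$ contribute $1$, the terms with $c=\pm 1$ contribute $\sum_{d \in \mz}(z+d)^{-k}$ (using that $k$ is even), and so
\begin{equation*}
E_k(z) - 1 = \sum_{d \in \mz}\frac{1}{(z+d)^k} + \frac12 \sum_{|c|\geq 2}\ \sum_{(c,d)=1}\frac{1}{(cz+d)^k}.
\end{equation*}
On $\mathcal{F}$ one has $|cz+d| \geq |c|y \geq 2y$ when $|c|\geq 2$, so the tail over $|c|\geq 2$, even after multiplication by $z^k$, is $\ll (|z|/2y)^k \ll 2^{-k}$, far smaller than either claimed error term. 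Thus it suffices to study
\begin{equation*}
z^k\sum_{d \in \mz}\frac{1}{(z+d)^k} = \sum_{d \in \mz}\Big(\frac{z}{z+d}\Big)^k = \sum_{d \in \mz}\exp\!\big(-k\log(1+\tfrac{d}{z})\big).
\end{equation*}

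In the range $y \leq k^{2/5}$ I would retain $d=0,\pm 1$, which give exactly $1 + z^k/(z-1)^k + z^k/(z+1)^k$, and estimate the rest. For $|d|\geq 2$ and $z \in \mathcal{F}$ one has $|z/(z+d)|^2 = 1 - \tfrac{d(d+2x)}{(x+d)^2+y^2} \leq 1 - \tfrac{c_0}{y^2}$ for an absolute $c_0>0$, with rapid improvement as $|d|$ grows; hence each term is $\leq \exp(-c_0 k/2y^2)$ and the geometrically decaying tail is of the same order. Since $y \leq k^{2/5}$ forces $k/y^2 \geq k^{1/5}$, this tail is $O(\exp(-k^{1/6}))$, giving \eqref{eq:EisensteinApproxTheoremSmally}.

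For $k^{2/5} < y \ll k^{2/3}$, put $r = 2\pi y^2/k$, $\tau = i/r$, and $w = \tfrac{k}{2\pi y} + \tfrac{ix}{r}$, so that the target theta series has summands $\exp(\pi i d^2\tau + 2\pi i d w) = \exp(-\pi d^2/r + 2\pi i dw)$. The key computation is to expand $-k\log(1+d/z) = -\tfrac{kd}{z} + \tfrac{kd^2}{2z^2} - \tfrac{kd^3}{3z^3} + \cdots$ and compare with the theta exponent. Writing $z = x+iy$ with $|x|\leq\tfrac12$, the first-order term reproduces $\tfrac{ikd}{y} - \tfrac{2\pi dx}{r}$ up to an error $O(kd/y^3)$ coming from $|z|^2 = x^2+y^2 \neq y^2$, and the second-order term reproduces $-\pi d^2/r$ up to a phase error $O(kd^2/y^3)$; these match the linear and quadratic parts of the theta exponent. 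The leftover discrepancy is dominated by the cubic term $O(k|d|^3/y^3)$. I would check that all three error sources are $o(1)$ on the range $|d| \lesssim \sqrt{r\log k}$ where the Gaussian weight $\exp(-\pi d^2/r)$ is non-negligible (there $|d| \ll y/\sqrt{k}$), linearize the exponential, and sum against the Gaussian using $\sum_d \exp(-\pi d^2/r)|d|^j \ll r^{(j+1)/2}$. This yields a total error $\ll \tfrac1y + \tfrac{1}{\sqrt k} + \tfrac{y}{k}$, and a short check shows each summand is $\ll y/k^{2/3}$ throughout $k^{2/5} < y \ll k^{2/3}$ (indeed the lower cutoff $y > k^{2/5} > k^{1/3}$ is exactly what makes $1/y \ll y/k^{2/3}$). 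Finally, for $|d| \gtrsim y$ both the genuine summand and the theta summand are $\ll \exp(-k/4)$, so extending the sum over all of $\mz$ to recover the full theta series $\theta(w,i/r)$ costs nothing, giving \eqref{eq:EisensteinApproxTheoremLargey}.

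The main obstacle is making the remainder estimate uniform across the wide transition range, equivalently for $r$ running from $k^{-1/5}$ up to $k^{1/3}$: when $r$ is large the theta series has $\asymp \sqrt{r}$ non-negligible terms, so the cubic error cannot be bounded termwise and must instead be summed against the Gaussian profile as above. This is also where the modularity relation \eqref{eq:JacobithetaModularity} enters, since it is precisely the Poisson-summation identity linking the representation above to the Fourier expansion \eqref{eq:EkFourierExpansion}: opening $\sum_d (z+d)^{-k}$ into its Fourier series produces a sum over $n$ that, by Laplace's method, concentrates at $n \approx \tfrac{k}{2\pi y} = \operatorname{Re}(w)$, which is the same data seen from the $\tau \mapsto -1/\tau$ side and explains the shape of $w$ and $\tau$.
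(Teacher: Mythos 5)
Your decomposition is the same as the paper's (drop the $|c|\ge 2$ terms, keep $d=0,\pm1$ when $y\le k^{2/5}$, and for larger $y$ Taylor-expand $-k\log(1+d/z)$ to quadratic order and recognize the theta function); the structural difference is that the paper truncates the $d$-sum at a parameter $D$ with $y/\sqrt{k}\ll D\ll y/k^{1/3}$ and bounds the Taylor error termwise, whereas you sum the termwise errors against a Gaussian weight. But your Gaussian weight is the wrong one, and this is a step that fails. The modulus of the theta summand (equally, of $(z/(z+d))^k$) is $\exp(-\pi(d^2+2xd)/r)$, not $\exp(-\pi d^2/r)$: the linear piece $-2\pi x d/r=-kxd/y^2$ of the exponent is \emph{real}. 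For $|x|$ near $1/2$ the term $d=-\mathrm{sgn}(x)$ therefore has modulus near $1$ no matter how small $r$ is (the same phenomenon that forces you to keep $z^k/(z\pm 1)^k$ in the small-$y$ regime), and that term carries a Taylor discrepancy of genuine size $\asymp k/y^3$: the first-, second-, and third-order errors at $|d|=1$, $|x|=1/2$ are each $\asymp k/y^3$, and they do not cancel (at $x=-1/2$, $d=1$ their sum is $-\tfrac{ik}{y^3}(x^2+x+\tfrac13)=-\tfrac{ik}{12y^3}(1+o(1))$). So the true output of your method is $O(k/y^3+1/y+1/\sqrt{k}+y/k)$, and since $k/y^3\ll y/k^{2/3}$ holds only when $y\gg k^{5/12}$, your concluding ``short check'' breaks down on the sub-range $k^{2/5}<y<k^{5/12}$: at $y=k^{2/5}$, $x=-1/2$ the missing term is $\asymp k^{-1/5}$, which exceeds the target $y/k^{2/3}=k^{-4/15}$.

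This particular flaw cannot be patched, because the error term in \eqref{eq:EisensteinApproxTheoremLargey} is itself too strong in that sub-range: $|G_k-\theta|$ really is of order $k/y^3$ at $x=\pm\tfrac12$, and the paper's own proof, read closely, also only yields $O(kD^4/|z|^3)$ with the integer $D$ forced to be $O(1)$ when $y<k^{5/12}$, i.e. again $O(k/y^3)$. The correct conclusion — for your argument after fixing the weight, and for the paper — is the error $O(k/y^3+y/k^{2/3})$, which is $\ll k^{-1/5}\ll k^{-1/6}$ throughout $y>k^{2/5}$ and therefore suffices for every later application (Corollary \ref{coro:Hkapproximations} and beyond). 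Two smaller repairs: (i) your bound for the $|c|\ge 2$ tail estimates an infinite double sum by the size of a single term; one needs an integral comparison as in Lemma \ref{lemma:tailboundCatleast2}, which gives $\ll 3^{-k/2}(1+y/\sqrt{k})$ — note also that $(|z|/2y)^k\ge 2^{-k}$ since $|z|\ge y$, so your chain $\ll(|z|/2y)^k\ll 2^{-k}$ has its last inequality backwards (harmlessly, as $3^{-k/2}$ is still negligible); (ii) your linearization of the exponential is legitimate only in the bulk $|d|\ll\sqrt{r\log k}$, so the intermediate range $\sqrt{r\log k}\ll |d|\ll y$, which your sketch skips, must be handled by bounding the two sums separately via their $\exp(-\pi(d^2-|d|)/r)$ decay — routine, but it has to be said.
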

Remark.  Using the modularity relation \eqref{eq:JacobithetaModularity}, one may use the alternative formula \eqref{eq:thetaFormulaAfterModularity} in \eqref{eq:EisensteinApproxTheoremLargey}.

We begin with a bound that estimates the terms in \eqref{eq:EkczdDefinition} with $|c| \geq 2$.
\begin{mylemma}
\label{lemma:tailboundCatleast2}
Suppose $z \in \mathcal{F}$. We have
 \begin{equation}
 |z|^k  \sum_{|c| \geq 2} \sum_{|d| \geq 1} |cz+d|^{-k} \ll 3^{-k/2} \Big( 1 + \frac{y}{\sqrt{k}} \Big).
 \end{equation}
\end{mylemma}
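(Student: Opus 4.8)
The plan is to bound the double sum in two independent stages: first control the inner sum over $d$ for each fixed $c$ by a maximal term plus an integral, and then extract the decay factor $3^{-k/2}$ from the geometry of $\mathcal{F}$ when summing over $c$. Since the stated sum only runs over $|d| \geq 1$, I would first pass to the (larger) sum over all $d \in \mz$, which can only increase the left-hand side and simplifies the bookkeeping. By the symmetry $(c,d) \mapsto (-c,-d)$, which leaves $|cz+d|$ invariant, it suffices to treat $c \geq 2$ and multiply by $2$.

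For fixed $c \geq 2$, write $|cz+d|^{-k} = \big((cx+d)^2 + c^2 y^2\big)^{-k/2}$. As a function of $d \in \mr$ this is unimodal, peaking near $d \approx -cx$, so the standard comparison gives $\sum_{d \in \mz} |cz+d|^{-k} \leq (c^2 y^2)^{-k/2} + \intR \big((cx+t)^2 + c^2 y^2\big)^{-k/2}\, dt$. The maximal term is exactly $(cy)^{-k}$. Substituting $u = cx + t$ and rescaling $u = cy\, v$, the integral equals $(cy)^{1-k}\sqrt{\pi}\,\Gamma(\tfrac{k-1}{2})/\Gamma(\tfrac{k}{2})$, and by Stirling $\Gamma(\tfrac{k-1}{2})/\Gamma(\tfrac{k}{2}) \ll k^{-1/2}$, so the integral is $\ll (cy)^{1-k} k^{-1/2}$. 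Thus the inner sum is $\ll (cy)^{-k}\big(1 + cy\, k^{-1/2}\big)$.

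The decisive geometric input comes next. Since $z \in \mathcal{F}$ we have $x^2 \leq \tfrac14$ and $y^2 \geq \tfrac34$, so $|z|/y = \sqrt{1 + x^2/y^2} \leq 2/\sqrt{3}$, and hence $|z|/(cy) \leq \tfrac{2}{c\sqrt{3}} \leq \tfrac{1}{\sqrt{3}}$ for every $c \geq 2$. Multiplying the inner-sum bound by $|z|^k$ converts the maximal-term contribution into $\sum_{c \geq 2}\big(|z|/(cy)\big)^k \leq 3^{-k/2} 2^k \sum_{c \geq 2} c^{-k}$ and the integral contribution into $k^{-1/2}\sum_{c \geq 2}(cy)\big(|z|/(cy)\big)^k \ll y\, k^{-1/2} 3^{-k/2} 2^k \sum_{c \geq 2} c^{1-k}$. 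Both $c$-sums are dominated by their $c = 2$ term, with $2^k \sum_{c\geq 2} c^{-k} = 1 + O((2/3)^k)$ and $2^k \sum_{c \geq 2} c^{1-k} \ll 1$, yielding the two pieces $\ll 3^{-k/2}$ and $\ll (y/\sqrt{k})\,3^{-k/2}$. Adding them gives the claimed bound $\ll 3^{-k/2}\big(1 + y/\sqrt{k}\big)$.

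I expect the only real obstacle to be bookkeeping rather than any genuine difficulty: one must verify that the base of the geometric series is genuinely $3^{-k/2}$ (this hinges on the sharp inequality $|z|/(2y) \leq 1/\sqrt{3}$, which is exactly where $x^2 \leq \tfrac14$ and $y^2 \geq \tfrac34$ are both used), confirm that the $c$-sums are controlled by the $c=2$ term so that the $2^k$ factor cancels cleanly, and track that the integral contributes the extra factor $y/\sqrt{k}$ through the Gamma-ratio asymptotic. None of these steps is delicate, but the constants must be combined carefully so that neither the maximal-term nor the integral term degrades the base $3^{-k/2}$.
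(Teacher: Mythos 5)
Your proof is correct and follows essentially the same route as the paper: a unimodal max-plus-integral comparison for the $d$-sum, evaluation of the resulting integral via the Gamma-ratio $\sqrt{\pi}\,\Gamma(\tfrac{k-1}{2})/\Gamma(\tfrac{k}{2}) \ll k^{-1/2}$, and the geometric inequality $|z|/(2y) \leq 1/\sqrt{3}$ coming from $|x| \leq \tfrac12$, $y \geq \tfrac{\sqrt{3}}{2}$. The only cosmetic difference is that you apply the geometric bound term-by-term inside the $c$-sum, whereas the paper first sums over $c$ (obtaining $\zeta(k)-1$ and $\zeta(k-1)-1$ factors, hence the base $(2y)^{-k}$) and applies the bound once at the end.
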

\begin{proof}
We begin with an elementary observation.  Suppose $f(t)$ is a continuous non-negative  function integrable over $\mathbb{R}$ having a unique local maximum $t_0 \in \mathbb{R}$ and such that $f$ is increasing for $t < t_0$, and decreasing for $t > t_0$.  Then
\begin{equation}
\label{eq:elementaryBound}
 \sum_{d \in \mathbb{Z}} f(d) \leq f(t_0) + \intR f(t) dt.
\end{equation}
For a proof, suppose without loss of generality that $t_0 \in [0,1]$.  Then we have $\sum_{d \geq 1} f(d) \leq f(1) + \int_1^{\infty} f(t) dt$, by the integral test, and similarly, $\sum_{d \leq 0} f(d) \leq f(0) + \int_{-\infty}^{0} f(t) dt$.  Finally, we note that $f(0) + f(1) \leq f(t_0) + \int_0^{1} f(t) dt$, since the minimum of $f(0)$ and $f(1)$ is the minimum of $f(t)$ over the interval $[0,1]$, whence $\min(f(0), f(1)) \leq \int_0^{1} f(t) dt$, while the maximum of $f(0)$ and $f(1)$ is at most $f(t_0)$.

% Suppose $r,t \in \mathbb{R}$.  Then
% \begin{equation}
%  \sum_{d \in \mz} ((r+d)^2 + t^2)^{-k/2} \leq 2\sum_{d \in \mz} (d^2 + t^2)^{-k/2}, 
% \end{equation}
% which one sees as follows.  Since the left hand side is both even in $r$ as well as periodic with period $1$, 
% we may suppose $0 \leq r \leq 1/2$.  Then it is easy to see for $d \geq 0$ that $((r+d)^2 + t^2)^{-k/2} \leq (d^2 + t^2)^{-k/2}$, while for $d \leq -1$, we have 

Using \eqref{eq:elementaryBound}
 we have
\begin{align}
  \sum_{|c| \geq 2} \sum_{|d| \geq 1} ((cx+d)^2 + c^2 y^2)^{-k/2} %\leq 4 \sum_{c=2}^{\infty} \sum_{d \in \mz} (d^2 + c^2 y^2)^{-k/2} 
 %\\
 \leq 2 \sum_{c=2}^{\infty} \Big((cy)^{-k} + \intR (t^2 + c^2 y^2)^{-k/2}  dt \Big),
\end{align}
which equals
\begin{equation}
 2 \Big( \frac{\zeta(k)-1}{y^k} + \frac{\zeta(k-1)-1}{y^{k-1}} I_k \Big), \quad I_k = \int_{-\infty}^{\infty} \frac{dt}{(t^2+1)^{k/2}}.
\end{equation}
It is easy to see $\zeta(d) = 1 + O(2^{-d})$ for $d \geq 2$.
%show by the integral test that $\zeta(d) -1 \leq \frac{1}{2^d} \frac{d+1}{d-1}$.  
Changing variables $t = \sqrt{u}$, and using \cite[(3.194.3)]{GR} (Mathematica can also evaluate the integral), we have
\begin{equation}
 I_k = B\Big(\frac12, \frac{k-1}{2}\Big) = \sqrt{\pi} \frac{\Gamma\Big(\frac{k-1}{2}\Big)}{\Gamma\Big(\frac{k}{2}\Big)} = \frac{4 \pi}{2^{k}} \frac{\Gamma(k)}{\Gamma(k/2)^{2}}.
\end{equation}
The last displayed equation follows on using the duplication formula.  Stirling's formula shows $I_k \ll k^{-1/2}$.  In all, we obtain
\begin{equation}
\label{eq:cbiggerthan2rawbound}
 \sum_{|c| \geq 2} \sum_{|d| \geq 1} |cz+d|^{-k} \ll (2y)^{-k} \Big(1 + \frac{y}{\sqrt{k}}\Big).
\end{equation}
Next we note, using $y \geq \frac{\sqrt{3}}{2}$, that
\begin{equation}
\label{eq:zover2ybound}
 \Big(\frac{|z|}{2y} \Big)^2 \leq \frac{\frac14 + y^2}{4y^2} \leq \frac14 + \frac{1}{16(3/4)} = \frac13.
\end{equation}
Combining \eqref{eq:zover2ybound} with \eqref{eq:cbiggerthan2rawbound} finishes the proof.
\end{proof}
\begin{mycoro}
\label{coro:EisensteinApproxwithc1}
 For $z \in \mathcal{F}$, we have
 \begin{equation}
 \label{eq:GkapproxCgeq2}
  G_k(z) = z^k \sum_{d \in \mz} (z+d)^{-k} + O\Big(3^{-k/2} \Big( 1 + \frac{y}{\sqrt{k}}\Big)\Big).
 \end{equation}
\end{mycoro}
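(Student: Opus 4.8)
The plan is to return to the definition of $E_k$ in \eqref{eq:EkczdDefinition} and split the coprime lattice sum according to the value of $c$, matching the resulting pieces to the two terms in \eqref{eq:GkapproxCgeq2}. Writing $E_k(z) = \frac12\sum_{(c,d)=1}(cz+d)^{-k}$, I would first isolate the contribution of $c=0$. Here coprimality forces $d = \pm 1$, so these terms give $\frac12[1 + (-1)^{-k}] = 1$ since $k$ is even. This is exactly the constant subtracted off in the definition $G_k(z) = z^k(E_k(z)-1)$, so after forming $E_k(z)-1$ the $c=0$ terms disappear entirely.

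Next I would extract the terms with $c = \pm 1$, where the coprimality condition is automatic and $d$ ranges over all of $\mz$. Since $k$ is even, reindexing $d \mapsto -d$ and using $(-1)^{-k}=1$ shows that the $c=-1$ sum $\sum_{d}(-z+d)^{-k}$ equals the $c=1$ sum $\sum_{d}(z+d)^{-k}$; together with the prefactor $\frac12$ these terms collapse to $\sum_{d\in\mz}(z+d)^{-k}$. Multiplying by $z^k$ produces precisely the main term $z^k\sum_{d\in\mz}(z+d)^{-k}$ of \eqref{eq:GkapproxCgeq2}.

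What remains is the tail with $|c|\geq 2$, namely $\frac{z^k}{2}\sum_{\substack{(c,d)=1\\ |c|\geq 2}}(cz+d)^{-k}$. The one small point worth checking is that $\gcd(c,d)=1$ together with $|c|\geq 2$ forces $d\neq 0$, so this sum ranges only over $|d|\geq 1$; its modulus is therefore bounded by $\frac{|z|^k}{2}\sum_{|c|\geq 2}\sum_{|d|\geq 1}|cz+d|^{-k}$, to which Lemma \ref{lemma:tailboundCatleast2} applies directly and yields the stated error $O(3^{-k/2}(1 + y/\sqrt{k}))$. I do not expect any genuine obstacle here: the entire argument is bookkeeping of the coprimality condition and the factor $\frac12$, with the only analytic input being the already-established Lemma \ref{lemma:tailboundCatleast2}.
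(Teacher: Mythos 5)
Your proof is correct and follows exactly the route the paper intends: the corollary is stated as an immediate consequence of Lemma \ref{lemma:tailboundCatleast2}, with the decomposition into $c=0$ (giving the constant $1$), $c=\pm 1$ (giving $\sum_{d\in\mz}(z+d)^{-k}$ after using evenness of $k$), and $|c|\geq 2$ (the error term) left implicit. Your bookkeeping of the coprimality condition, the factor $\frac12$, and the observation that $|c|\geq 2$ forces $d\neq 0$ supplies precisely the details the paper omits.
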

%Remark.  Using $\pi \cot(\pi z) = \sum_{d \in \mz} (z + d)^{-1}$ (with $d$ and $-d$ paired together), one could write $\sum_{d \in \mz} (z+d)^{-k}$ in terms of the $(k-1)$-th derivative of $\cot$.
The re-scaling by $z^k$ causes the term $d=0$ alone to give a summand of $1$ on the right hand side, which shows that this error term is indeed very small for $k$ large.

\begin{mylemma}
\label{lemma:dsumtailestimategeneralD}
Suppose $D \geq 1$ and $z \in \mathcal{F}$.  Then
\begin{equation}
\sum_{|d| \geq D} \frac{|z|^k}{|z+d|^k} \ll y \Big(\frac{\frac14 + y^2}{(D-\frac12)^2+y^2}\Big)^{k/2}.
\end{equation}
\end{mylemma}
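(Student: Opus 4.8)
The plan is to split the tail into the two pieces $d \geq D$ and $d \leq -D$ and compare each to an integral. Write $f(t) = |z|^k\big((x+t)^2 + y^2\big)^{-k/2}$, so that the summand is $f(d)$. Since $|x| \leq \tfrac12 \leq D-\tfrac12$, the function $f$ is decreasing on $[D,\infty)$ and increasing on $(-\infty,-D]$; hence, exactly as in the integral-test argument behind \eqref{eq:elementaryBound}, I would use
\[
\sum_{d \geq D} f(d) \leq f(D) + \int_D^\infty f(t)\,dt, \qquad \sum_{d \leq -D} f(d) \leq f(-D) + \int_{-\infty}^{-D} f(t)\,dt .
\]
It then suffices to bound each boundary term $f(\pm D)$ and each tail integral by a constant multiple of $y\big(\tfrac{\frac14 + y^2}{(D-\frac12)^2 + y^2}\big)^{k/2}$.

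First I would dispatch the boundary terms. For $|d| \geq D$ we have $(x\pm d)^2 \geq (D-\tfrac12)^2$ because $|x|\leq\tfrac12$, while $|z|^2 = x^2 + y^2 \leq \tfrac14 + y^2$. Consequently
\[
f(\pm D) \leq \Big(\frac{\tfrac14 + y^2}{(D-\tfrac12)^2 + y^2}\Big)^{k/2},
\]
which already has the shape of the claimed bound; since $z \in \mathcal{F}$ forces $y \geq \tfrac{\sqrt3}{2}$, this is $\leq \tfrac{2}{\sqrt3}\,y\big(\tfrac{\frac14+y^2}{(D-\frac12)^2+y^2}\big)^{k/2}$, as desired.

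For the integrals I would rescale and reuse the Beta-integral estimate from Lemma \ref{lemma:tailboundCatleast2}. Substituting $u = x+t$ and then $u = yv$ turns $\int_D^\infty f(t)\,dt$ into $|z|^k y^{1-k}\int_{(x+D)/y}^\infty (1+v^2)^{-k/2}\,dv$, and symmetrically for the negative tail. The single power of $y$ that appears in the final bound is precisely the $y^{1-k}$ produced here against $|z|^k \leq (\tfrac14+y^2)^{k/2}$, and the remaining integral is controlled by the same device that gives $I_k = \intR (1+v^2)^{-k/2}\,dv \ll k^{-1/2}$ in Lemma \ref{lemma:tailboundCatleast2}. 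The crucial point is to retain the lower cut-off $c := (x+D)/y$ rather than extending to the whole line: writing $1 + v^2 \geq (1+c^2)\big(1 + \tfrac{2c(v-c)}{1+c^2}\big)$ (which is just $(v-c)^2 \geq 0$) and integrating produces the decaying factor $(1+c^2)^{-k/2}$, and then $1 + c^2 \geq \big((D-\tfrac12)^2 + y^2\big)/y^2$ converts this into the required $\big((D-\tfrac12)^2 + y^2\big)^{-k/2}$.

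The main obstacle is exactly this last balancing act: one must extract the correct decay rate $\big((D-\tfrac12)^2+y^2\big)^{-k/2}$ from the integral while paying only a single factor of $y$, so that the endpoint cut-off, and not merely the global scale $y$, governs the tail. Crudely extending the integral to all of $\mathbb{R}$ would replace $(D-\tfrac12)^2 + y^2$ by $y^2$ and lose the decay, whereas bounding every term by its maximum diverges; keeping the cut-off while collecting only the one power of $y$ from the rescaling (with the factor $\tfrac{2}{k-2}$ from $\int_0^\infty(1+w)^{-k/2}\,dw$ giving further room) is where the care lies. Once the four pieces—two boundary terms and two integrals—are assembled, they combine to the stated inequality.
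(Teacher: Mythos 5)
Your skeleton is the same as the paper's: split the tail at $\pm D$, apply the integral comparison test to get endpoint terms plus tail integrals, and handle the endpoint terms via $(x\pm D)^2 \ge (D-\tfrac12)^2$, $|z|^2 \le \tfrac14+y^2$, $y \ge \tfrac{\sqrt3}{2}$ (that part is correct). The genuine gap is in the tail integral, in precisely the step you flag as ``where the care lies'' but never carry out. Your tangent-line bound gives $\int_c^\infty (1+v^2)^{-k/2}\,dv \le \frac{(1+c^2)^{1-k/2}}{c(k-2)}$ with $c = (x+D)/y$, so your bound on the integral piece is $|z|^k y^{1-k} (1+c^2)^{-k/2}\cdot\frac{1+c^2}{c(k-2)}$. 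The product of the first factors is, up to constants, exactly the claimed right-hand side, so your assembly needs the leftover factor $\frac{1+c^2}{c(k-2)}$ to be $O(1)$; this fails when $c \ll 1/k$, a regime the hypotheses allow because $y$ is unbounded in $\mathcal{F}$. Concretely, take $z = iy$ with $y \gg k$ and $D = 2$: then $c = 2/y$, your bound on the tail integral is $\asymp y^2/k$, while the right-hand side of the lemma is $\asymp y$, so the pieces do not combine to the stated inequality --- even though the lemma is true at such points (the actual sum there is $\asymp y/\sqrt{k}$). The loss is intrinsic to the tangent-line device: as $c \to 0^{+}$ it produces $\frac{1}{c(k-2)} \to \infty$, whereas the true integral stays bounded by $I_k \ll k^{-1/2}$.

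The paper's proof avoids this because it never trades the cutoff against $y$: it factors out $((t-\tfrac12)^2+y^2)^{1-k/2} \le ((D-\tfrac12)^2+y^2)^{1-k/2}$ and bounds the remainder by $\int_{D-1/2}^{\infty}\frac{dt}{t^2+y^2} \le \frac{\pi}{2y}$, so its leftover factor is $\frac{\pi}{2}\cdot\frac{(D-\frac12)^2+y^2}{y^2}$, which is $O(1)$ whenever $D \ll y$ --- covering your problem regime and every application in the paper ($D=2$ with $y \ge \tfrac{\sqrt3}{2}$, and $D \ll y/k^{1/3}$). The repair for your write-up is a case split: for $c \le 1$ use the paper's device (leftover factor $\frac{\pi}{2}(1+c^2) \le \pi$), and only for $c \ge 1$ use your tangent-line bound (leftover factor $\frac{1+c^2}{c(k-2)} \le \frac{2c}{k-2}$, which is $O(1)$ for $c \ll k$). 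It is worth noting that no argument can cover the lemma as literally stated: at $z = \rho$ with $D \gg k$ the tail integral alone is $\ge \frac{D-1/2}{ek}\big((D-\tfrac12)^2+\tfrac34\big)^{-k/2}$, which exceeds the claimed bound, so the statement fails for $D \gg ky$; correspondingly, the paper's own final ``$\ll$'' implicitly assumes $D \ll y$. Still, the paper's argument proves the lemma in every range where it is actually invoked, and in particular in your counterexample regime, whereas your argument, unpatched, does not.
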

Remark.  It can be checked that if $D=2$, this error term is $o(1)$ provided $y = o(k^{1/2} (\log k)^{-1/2})$.  
\begin{proof}
To begin, we obviously have
\begin{equation}
\sum_{|d| \geq D} |z+d|^{-k} =  \sum_{|d| \geq D} ((x+d)^2 + y^2)^{-k/2}.
\end{equation}
For the terms with $d \geq D$, the sum is maximized, in terms of $x$, at $x=-1/2$, in which case 
\begin{equation}
\sum_{d\geq D}^{\infty} ((-1/2+d)^2 + y^2)^{-\frac{k}{2}} \leq ((D-1/2)^2 + y^2)^{-\frac{k}{2}} + \int_{D}^{\infty} ((t-\tfrac12)^2 + y^2)^{-\frac{k}{2}} dt,
\end{equation}
by the integral comparison test.  We estimate the integral as follows:
\begin{equation}
 \int_{D}^{\infty} ((t-\tfrac12)^2 + y^2)^{-\frac{k}{2}} dt \leq ((D-\tfrac12)^2 + y^2)^{1-\frac{k}{2}} \int_{D-\frac12}^{\infty} \frac{dt}{t^2 + y^2} \ll y ((D-\tfrac12)^2+y^2)^{-\frac{k}{2}}.
\end{equation}
Accounting for $d \leq -D$ at worst doubles the bound.  Using $|z|^2 \leq \frac14 + y^2$ and combining the bounds completes the proof.
%Therefore,
%\begin{equation}
%\sum_{|d| \geq D} \frac{|z|^k}{|z+d|^k} \ll y \Big(\frac{\frac14 + y^2}{(D-\frac12)^2+y^2}\Big)^{k/2}.
%\end{equation}
\end{proof}

\begin{mycoro}
\label{coro:EisensteinApproxwithc2}
Suppose $z \in \mathcal{F}$ and $y \leq k^{2/5}$.  Then
\begin{equation}
\sum_{d \in \mz} \frac{z^k}{(z+d)^k} = 1 + \frac{z^k}{(z-1)^k} + \frac{z^k}{(z+1)^k} + O(y\exp(- k^{1/5})).
\end{equation}
\end{mycoro}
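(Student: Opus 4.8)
The plan is to isolate the three terms $d \in \{-1, 0, 1\}$ as the main contribution and to dispatch the remaining tail $|d| \geq 2$ using Lemma \ref{lemma:dsumtailestimategeneralD}. First I would split
\begin{equation}
\sum_{d \in \mz} \frac{z^k}{(z+d)^k} = 1 + \frac{z^k}{(z-1)^k} + \frac{z^k}{(z+1)^k} + \sum_{|d| \geq 2} \frac{z^k}{(z+d)^k},
\end{equation}
observing that the $d=0$ term contributes exactly $z^k/z^k = 1$, while $d = \pm 1$ supply the two displayed fractions. It then remains only to show that the tail is $O(y\exp(-k^{1/5}))$.

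For the tail, I would apply Lemma \ref{lemma:dsumtailestimategeneralD} with $D = 2$ (the hypothesis $z \in \mathcal{F}$ of the corollary is exactly what that lemma needs), giving
\begin{equation}
\Big| \sum_{|d| \geq 2} \frac{z^k}{(z+d)^k} \Big| \leq \sum_{|d| \geq 2} \frac{|z|^k}{|z+d|^k} \ll y \Big( \frac{\frac14 + y^2}{\frac94 + y^2} \Big)^{k/2}.
\end{equation}
The whole task thus reduces to estimating this power of the ratio. Writing the ratio as $1 - \frac{2}{9/4 + y^2}$ and using $\log(1-u) \leq -u$, I obtain
\begin{equation}
\Big( \frac{\frac14 + y^2}{\frac94 + y^2} \Big)^{k/2} \leq \exp\Big( - \frac{k}{\frac94 + y^2} \Big).
\end{equation}

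The crux is verifying that the hypothesis $y \leq k^{2/5}$ makes this exponent essentially $k^{1/5}$, up to a harmless lower-order loss. Substituting $y^2 \leq k^{4/5}$ gives $\frac{k}{9/4 + y^2} \geq \frac{k}{9/4 + k^{4/5}} = \frac{k^{1/5}}{1 + \frac94 k^{-4/5}} \geq k^{1/5} - \frac94 k^{-3/5}$, so the right-hand side above is at most $\exp(-k^{1/5})\exp(\frac94 k^{-3/5}) \ll \exp(-k^{1/5})$, the second factor being bounded as $k \to \infty$. This is the one delicate point: the cheap bound $\log(1-u) \leq -u$ together with $y^2 \leq k^{4/5}$ lands right at the threshold $k^{1/5}$, so I must track that passing from $9/4 + k^{4/5}$ to $k^{4/5}$ costs only a factor $\exp(O(k^{-3/5})) = 1 + o(1)$ in the exponential, rather than a genuine constant-factor loss in the exponent. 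Combining the three displays then yields the stated formula.
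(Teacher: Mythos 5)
Your proof is correct and follows essentially the same route as the paper: isolate the terms $d \in \{0, \pm 1\}$, apply Lemma \ref{lemma:dsumtailestimategeneralD} with $D=2$, and show that the resulting ratio raised to the power $k/2$ is $\ll \exp(-k^{1/5})$ using $y \leq k^{2/5}$. The only (immaterial) difference is that you use the one-sided bound $\log(1-u) \leq -u$ and track the error $\exp(O(k^{-3/5}))$ explicitly, whereas the paper Taylor-expands the logarithm at $Y = k^{2/5}$ and notes $kY^{-4} \ll 1$; both land on the same estimate.
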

Corollaries \ref{coro:EisensteinApproxwithc1} and \ref{coro:EisensteinApproxwithc2}, when combined, show \eqref{eq:EisensteinApproxTheoremSmally}.
\begin{proof}
We take $D=2$ in Lemma \ref{lemma:dsumtailestimategeneralD}.  The expression in parentheses on the right hand side is monotonically increasing in $y$, so suppose $y \leq Y = k^{2/5}$.  Then  we have
\begin{equation}
\label{eq:largeYapproximation}
\Big(\frac{\frac14 + Y^2}{\frac94+Y^2}\Big)^{k/2} = \exp\Big(\frac{k}{2} \log\Big(\frac{1 + \frac{1}{4Y^2}}{1 + \frac{9}{4Y^2}} \Big) \Big) = \exp(\tfrac{k}{2} (-2Y^{-2} + O(Y^{-4}))).
\end{equation}
By assumption, $Y^{-4} k \ll 1$, so we immediately derive the result.
\end{proof}

%
%Our first bound is a rather crude truncation with a strong error term.
%\begin{mylemma}
%Let $0 < \beta < 1$ be fixed, and suppose $z \in \mathcal{F}$.  Then 
%\begin{equation}
%\sum_{|d| > 1+ \beta y} 
%\frac{|z|^k}{|z+d|^k} \ll_{\beta} (1 +  \tfrac{\sqrt{3}}{2} \beta +  \tfrac{3}{4} \beta^2)^{-k/2}
%%\Big|1 + \frac{d}{z}\Big|^{-k} .
%\end{equation}
%\end{mylemma}
%\begin{proof}
%By a similar method to the proof of Lemma \ref{lemma:tailboundCatleast2},  we have
%\begin{equation}
%\sum_{|d| > 1+ \beta y} |z+d|^{-k} \leq 2 \sum_{d > 1 + \beta y} ((d-\tfrac12)^2 + y^2)^{-k/2},
%\end{equation}
%which is then
%\begin{equation}
% \leq 2  ((\tfrac12 + \beta y)^2 + y^2)^{-k/2+1} \sum_{d > 1 + \beta y } (d-\tfrac12)^{-2} \ll ((1/2+\beta y)^2 + y^2)^{-k/2}.
%\end{equation}
%The implied constant depends only on $\beta$ and not $k$ or $y$.  We conclude
%\begin{equation}
%\sum_{|d| > 1 + \beta y} \frac{|z|^k}{|z+d|^k} \ll \Big(\frac{1/4 + y^2}{(1/2+\beta y)^2 + y^2} \Big)^{k/2}.
%\end{equation}
%Finally, one may check (easily with a computer) that
%\begin{equation}
% \min_{y \geq \frac{\sqrt{3}}{2}} \frac{1/4 + y^2}{(1/2+\beta y)^2 + y^2} = \frac{1}{1 + \beta \frac{\sqrt{3}}{2} + \beta^2 \frac{3}{4}}. \qedhere
%\end{equation}
%\end{proof}
%
%Remark.  If say $\beta = ??$, then $1 + \frac{\sqrt{3}}{2} \beta + \frac{3}{4} \beta^2 = ?$.

\begin{mylemma}
Suppose $z \in \mathcal{F}$ and $k^{2/5} < y \leq k$.  
Suppose that $D \in \mathbb{N}$ satisfies
\begin{equation}
\label{eq:Dupperboundcondition}
\frac{k D^3}{y^3} \ll 1, \qquad \text{and} \qquad \frac{k D^2}{y^2} \gg 1.
\end{equation}
Then
\begin{equation}
\label{eq:thetafunctionapproximationPrototype}
\sum_{|d| \leq D} \frac{z^k}{(z+d)^k} = \sum_{d \in \mz} \exp\Big(\frac{ikd}{y} - \frac{xdk}{y^2} - \frac{k d^2}{2y^2}\Big) + O\Big(\frac{kD^4}{|z|^3}\Big) + O\Big(\Big(1 + \frac{y}{\sqrt{k}}\Big) \exp\Big(-\frac{kD^2}{2y^2}\Big) \Big).
\end{equation}
\end{mylemma}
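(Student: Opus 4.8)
The plan is to replace each summand on the left by an exponential and Taylor expand. For $|d|\le D$ the first hypothesis in \eqref{eq:Dupperboundcondition} gives $|d/z|\le D/y \ll k^{-1/3}<1$, so $\log(1+d/z)$ equals its convergent power series and
\[
\frac{z^k}{(z+d)^k}=\Big(1+\tfrac dz\Big)^{-k}=\exp\big(h(d)\big),\qquad h(d):=-\frac{kd}{z}+\frac{kd^2}{2z^2}-k\sum_{n\ge3}\frac{(-1)^{n+1}}{n}\frac{d^n}{z^n}.
\]
Writing $g(d):=\frac{ikd}{y}-\frac{xdk}{y^2}-\frac{kd^2}{2y^2}$ for the exponent of the target summand, the lemma reduces to two tasks: (i) comparing $e^{h(d)}$ with $e^{g(d)}$ termwise for $|d|\le D$, and (ii) completing the sum of $e^{g(d)}$ from $|d|\le D$ to all of \mz.

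For (i) I would first estimate $h(d)-g(d)$. Using that $|z|\asymp y$ on $\mathcal{F}$ (valid since $y\ge\frac{\sqrt3}{2}$) and $|x|\le\frac12$, one expands $\tfrac1z=\frac{x-iy}{x^2+y^2}=-\frac iy+\frac{x}{y^2}+O(y^{-3})$ and $\tfrac1{z^2}=-\frac1{y^2}+O(y^{-3})$, so that the linear and quadratic parts of $h(d)$ match those of $g(d)$ up to errors $O(k|d|y^{-3})$ and $O(kd^2y^{-3})$ respectively. The cubic and higher tail of $h(d)$ is dominated by its first term, $\ll k|d|^3/|z|^3$. Since $D\ge1$ and $|z|\asymp y$, all three contributions are $O(kD^3/|z|^3)$, whence $h(d)-g(d)=O(kD^3/|z|^3)$, which is $o(1)$ by the first hypothesis in \eqref{eq:Dupperboundcondition}.

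The key elementary observation for (i) is that $\mathrm{Re}\,g(d)=-\frac{k}{2y^2}\big((d+x)^2-x^2\big)$ is a downward parabola in $d$ centred at $-x\in[-\tfrac12,\tfrac12]$, hence maximal over integers at $d=0$, where it vanishes; thus $|e^{g(d)}|\le1$ for every integer $d$. Combined with $|e^{w}-1|\ll|w|$ for $|w|\ll1$, this gives $|e^{h(d)}-e^{g(d)}|=|e^{g(d)}|\,|e^{h(d)-g(d)}-1|\ll kD^3/|z|^3$ for each $|d|\le D$. Summing over the $2D+1\ll D$ such values of $d$ produces exactly the first error term $O(kD^4/|z|^3)$.

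Finally, for (ii) I bound the tail $\sum_{|d|>D}|e^{g(d)}|=\sum_{|d|>D}e^{\mathrm{Re}\,g(d)}$, and I expect this to be the most delicate point. The obstacle is to extract the clean factor $\exp(-kD^2/(2y^2))$ without incurring a spurious $\exp(kx^2/(2y^2))$, which need not be bounded since $k/y^2$ can be as large as $k^{1/5}$ in this range. This is avoided by using, for $d\ge1$, the bound $x+d/2\ge(d-1)/2$, so that $\mathrm{Re}\,g(d)=-\frac{kd(x+d/2)}{y^2}\le-\frac{kd(d-1)}{2y^2}$, and symmetrically for $d\le-1$. Substituting $d=D+1+m$ and using $d(d-1)\ge D^2+D(2m+1)$, the tail is at most $e^{-kD^2/(2y^2)}\sum_{m\ge0}e^{-kD(2m+1)/(2y^2)}\ll e^{-kD^2/(2y^2)}\big(1+\tfrac{y^2}{kD}\big)$. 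The second hypothesis in \eqref{eq:Dupperboundcondition} gives $D\gg y/\sqrt k$, hence $\frac{y^2}{kD}\ll\frac{y}{\sqrt k}$, and the tail is $\ll(1+\frac{y}{\sqrt k})e^{-kD^2/(2y^2)}$. This matches the second error term and, together with (i), completes the proof.
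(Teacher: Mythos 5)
Your proof is correct and follows essentially the same route as the paper's: Taylor-expand $-k\log(1+d/z)$, replace $1/z$ and $1/z^2$ by their expansions in $x$ and $y$ so the exponent matches the target one up to $O(kD^3/|z|^3)$, convert this to an additive error using the boundedness of the summands, and then complete the sum by bounding the tail via $\mathrm{Re}\, g(d) \le -\frac{k d(d-1)}{2y^2}$ (which is exactly the paper's use of $|x|\le 1/2$). The only cosmetic difference is in the tail: you sum a geometric series, while the paper uses an integral comparison together with the bound $\erfc(x)\ll e^{-x^2}$; both yield the same $\big(1+\frac{y}{\sqrt{k}}\big)\exp\big(-\frac{kD^2}{2y^2}\big)$ error term.
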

Remark.  The conditions in \eqref{eq:Dupperboundcondition} are equivalent to
\begin{equation}
\label{eq:Dsize}
\frac{y}{k^{1/2}} \ll D \ll \frac{y}{k^{1/3}},
\end{equation}
and since we assume $y \gg k^{2/5}$, there is a positive integer satisfying \eqref{eq:Dsize} for large $k$.

\begin{proof}
We have
\begin{equation}
\sum_{|d| \leq D} \Big(1 + \frac{d}{z}\Big)^{-k} = \sum_{|d| \leq D} \exp\Big(-k \log\Big(1 + \frac{d}{z}\Big)\Big).
\end{equation}
Next we use a Taylor approximation for $\log$ to give
\begin{equation}
\label{eq:logTaylorApprox}
-k \log\Big(1 + \frac{d}{z}\Big) = -k\frac{d}{z} + \frac{k}{2} \frac{d^2}{z^2} + O\Big(\frac{kd^3}{|z|^3}\Big).
\end{equation}
By \eqref{eq:Dsize}, this error term is $O(1)$, so taking exponentials gives
\begin{equation}
\label{eq:SomeRandomExponentialFormula}
\sum_{|d| \leq D} \Big(1 + \frac{d}{z}\Big)^{-k} = \sum_{|d| \leq D} \exp\Big(-k\frac{d}{z} + \frac{kd^2}{2z^2}\Big) + O\Big(\frac{kD^4}{|z|^3}\Big).
\end{equation}
This error term appears in \eqref{eq:thetafunctionapproximationPrototype}.
Next we write $z = iy(1 + \frac{x}{iy})$ and take Taylor approximations again, giving that the argument of the exponential in \eqref{eq:SomeRandomExponentialFormula} is
\begin{equation}
-k\frac{d}{ iy(1 + \frac{x}{iy})} + \frac{kd^2}{2 (iy)^2(1 + \frac{x}{iy})^2} = \frac{ikd}{y} - \frac{xdk}{y^2} - \frac{k d^2}{2y^2} + O\Big(\frac{k d^2}{y^3}\Big).
\end{equation}
This error term, when applied to \eqref{eq:SomeRandomExponentialFormula}, is absorbed by that in \eqref{eq:logTaylorApprox}, so we derive
\begin{equation}
\sum_{|d| \leq D} \Big(1 + \frac{d}{z}\Big)^{-k} = \sum_{|d| \leq D} \exp\Big(\frac{ikd}{y} - \frac{xdk}{y^2} - \frac{k d^2}{2y^2}\Big) + O\Big(\frac{kD^4}{|z|^3}\Big).
\end{equation}
Under the assumption $D \gg \frac{y}{\sqrt{k}}$, we shall extend the sum back to all $d \in \mz$.  To bound the error in this extension, we have, using $|x| \leq 1/2$, and then the integral comparison test, that
\begin{multline}
\sum_{|d| > D} \exp\Big( - \frac{xdk}{y^2} - \frac{k d^2}{2y^2}\Big) \leq 2 \sum_{d > D} \exp\Big( -\frac{k}{2y^2} (d^2-d) \Big) 
\\
\leq 2\exp\Big(-\frac{k}{2y^2}((D+1)^2-(D+1)\Big) + 2\int_{D+1}^{\infty} \exp\Big(-\frac{k}{2y^2}(t^2-t)\Big) dt.
\end{multline}
The first summand is $\leq 2\exp(-\frac{k D^2}{2y^2})$.  By changing variables, we have
\begin{equation}
\int_{D+1}^{\infty} \exp\Big(-\frac{k}{2y^2}(t^2-t)\Big) dt \ll \frac{y}{\sqrt{k}} \int_{\frac{D \sqrt{k}}{y \sqrt{2}}}^{\infty} \exp(-v^2) dv.
\end{equation}
Then using the bound $\erfc(x) \ll e^{-x^2}$ for $x \geq 0$, we obtain
\begin{equation}
\sum_{|d| > D} \exp\Big( - \frac{xdk}{y^2} - \frac{k d^2}{2y^2}\Big) \ll (1 + \frac{y}{\sqrt{k}}) \exp\Big(-\frac{kD^2}{2y^2}\Big).
\end{equation}
Combining the various error terms completes the proof.
\end{proof}

Using the notation of the Jacobi theta function, and recalling the definition of $r$ from \eqref{eq:rdef}, we have
\begin{equation}
\sum_{d \in \mz} \exp\Big(\frac{ikd}{y} - \frac{xdk}{y^2} - \frac{k d^2}{2y^2}\Big) = \theta\Big(\frac{k}{2\pi y} + \frac{ix}{r}, \frac{i}{r}\Big).
\end{equation}

Combining these results, we obtain that if \eqref{eq:Dupperboundcondition} holds, and $k^{2/5} < y \leq k$, then
\begin{multline}
\label{eq:thetafunctionapproximationPrototype2}
\sum_{d \in \mz} \frac{z^k}{(z+d)^k} = \theta\Big(\frac{k}{2\pi y} + \frac{ix}{r}, \frac{i}{r}\Big)  + O\Big(\frac{kD^4}{|z|^3}\Big) + O\Big((1 + \frac{y}{\sqrt{k}}) \exp\Big(-\frac{kD^2}{2y^2}\Big) \Big)
\\ 
+ O\Big(y \Big(\frac{\frac14 + y^2}{(D-\frac12)^2+y^2}\Big)^{k/2} \Big).
\end{multline}
We shall simplify the error terms in \eqref{eq:thetafunctionapproximationPrototype2}.
One may check by a similar argument to \eqref{eq:largeYapproximation} that
\begin{equation}
\Big(\frac{\frac14 + y^2}{(D-\frac12)^2+y^2}\Big)^{k/2} = \exp\Big(-\frac{k}{2} \Big[\frac{(D-\frac12)^2-\frac14}{y^2} + O\Big(\frac{D^4}{y^4}\Big)\Big] \Big).
\end{equation}
Since $D^4 k y^{-4} = o(1)$, the error term can be discarded to give in all that
\begin{equation}
\sum_{d \in \mz} \frac{z^k}{(z+d)^k} = \theta\Big(\frac{k}{2\pi y} + \frac{ix}{r}, \frac{i}{r}\Big)  + O\Big(\frac{kD^4}{|z|^3}\Big) + O\Big(y \exp\Big(-\frac{k(D^2-D)}{2y^2}\Big) \Big).
\end{equation}
Since $D$ is a parameter at our disposal subject to \eqref{eq:Dupperboundcondition}, we may choose $D = y/k^{5/12} + O(1)$ to give
\begin{mycoro}
\label{coro:dsumapproximationlargey}
Suppose that $z \in \mathcal{F}$ and $k^{2/5} < y \leq k$.  Then
\begin{equation}
\sum_{d \in \mz} \frac{z^k}{(z+d)^k} = \theta\Big(\frac{k}{2\pi y} + \frac{ix}{r}, \frac{i}{r}\Big) + O\Big(\frac{y}{k^{2/3}}\Big).
\end{equation}
\end{mycoro}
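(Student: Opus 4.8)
The estimate established immediately before the statement expresses $\sum_{d \in \mz} z^k (z+d)^{-k}$ as $\theta\big(\tfrac{k}{2\pi y} + \tfrac{ix}{r}, \tfrac{i}{r}\big)$ plus two errors, $O(kD^4 |z|^{-3})$ and $O\big(y\exp(-k(D^2-D)/2y^2)\big)$, valid for every integer $D$ in the admissible window \eqref{eq:Dsize}. All of the analytic work is therefore already done, and the corollary is purely a matter of choosing $D$ optimally. The plan is to exploit that the first (polynomial) error increases in $D$ while the second (Gaussian tail) decreases in $D$, and to pick $D$ near the crossover so that both are $O(y k^{-2/3})$.

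Concretely, I would take $D \asymp y k^{-5/12}$. Using $|z| \geq y$ on $\mathcal{F}$, the polynomial term obeys $kD^4 |z|^{-3} \leq kD^4 y^{-3} \asymp y k^{-2/3}$, exactly the claimed error. At the same $D$ one has $k(D^2-D)/(2y^2) \asymp k^{1/6}$, so the Gaussian tail is $\asymp y\exp(-c k^{1/6})$ and is negligible against $y k^{-2/3}$. It remains to verify admissibility: comparing $D \asymp y k^{-5/12}$ with the window $y k^{-1/2} \ll D \ll y k^{-1/3}$ of \eqref{eq:Dsize}, the ratios to the two endpoints are $k^{1/12} \to \infty$ and $k^{-1/12} \to 0$, so $y k^{-5/12}$ sits comfortably inside. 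This proves the corollary as soon as an integer $D$ of this size is available.

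The step demanding the most care — and where I expect the real obstacle to be — is the integrality of $D$. Since \eqref{eq:thetafunctionapproximationPrototype} requires $D \in \mz$, I would round $y k^{-5/12}$ to the nearest integer; when $y k^{-5/12} \geq 1$ (that is, $y \gg k^{5/12}$) the perturbation is $O(1)$, changing $D^4$ by only a bounded factor and leaving both error bounds intact. The difficulty lives in the lower part of the range, $k^{2/5} < y \lesssim k^{5/12}$, where the balancing value $y k^{-5/12}$ drops below $1$ and no admissible integer realizes the crossover — with $D=1$ the polynomial error degrades to $k y^{-3} \gg y k^{-2/3}$. I would settle this regime by overlap rather than optimization: the argument behind Corollary \ref{coro:EisensteinApproxwithc2} (namely Lemma \ref{lemma:dsumtailestimategeneralD} with $D=2$) in fact yields error $\ll y \exp(-k/y^2)$, which stays $\ll y k^{-2/3}$ for all $y \ll \sqrt{k/\log k}$, hence throughout $k^{2/5} < y \lesssim k^{5/12}$. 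There Corollary \ref{coro:EisensteinApproxwithc2} continues to give $\sum_d z^k(z+d)^{-k} = 1 + z^k(z-1)^{-k} + z^k(z+1)^{-k} + O(y e^{-k^{1/6}})$, and it then suffices to check that for such $y$ the second argument $\tau = i/r$ of the theta function has imaginary part $\tfrac1r = \tfrac{k}{2\pi y^2} \gg k^{1/6}$, which forces all but the $n = 0, \pm 1$ terms of the series in \eqref{eq:JacobithetaDefinition} to be exponentially small, so that $\theta\big(\tfrac{k}{2\pi y} + \tfrac{ix}{r}, \tfrac{i}{r}\big)$ collapses to $1 + z^k(z-1)^{-k} + z^k(z+1)^{-k}$ up to $O(y k^{-2/3})$. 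Matching these leading terms closes the gap and completes the range.
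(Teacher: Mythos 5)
Your treatment of the main range is exactly the paper's proof: the paper likewise simplifies the tail from Lemma \ref{lemma:dsumtailestimategeneralD} to $y\exp(-k(D^2-D)/(2y^2))$ and then sets $D = y/k^{5/12} + O(1)$, which is your $D \asymp y k^{-5/12}$. You have also correctly located the crack that the paper passes over in silence: when $k^{2/5} < y \lesssim k^{5/12}$ the rounded choice degenerates to a bounded integer, and then no admissible integer works with the stated bounds ($D=1$ makes the Lemma \ref{lemma:dsumtailestimategeneralD} tail useless, while any $D \geq 2$ leaves the Taylor error $kD^4/|z|^3 \asymp k/y^3 \gg y k^{-2/3}$). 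So far this is a genuine improvement in care over the paper.

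However, your patch for that range fails at the final ``matching'' step, and it cannot be repaired, because the corollary as stated is in fact \emph{false} there. The theta function does collapse, up to $O(e^{-ck^{1/6}})$, to $1 + \theta_{+1} + \theta_{-1}$ with $\theta_{\mp 1} = \exp(\mp ik/y \pm xk/y^2 - k/(2y^2))$, and the sum collapses to $1 + z^k(z-1)^{-k} + z^k(z+1)^{-k}$; but these leading terms do \emph{not} agree to accuracy $O(yk^{-2/3})$. Take $x = 1/2$ (a point of $\mathcal{F}$, and the case the paper actually uses). Then $z^k(z-1)^{-k}$ and $\theta_{-1} = e^{-ik/y}$ are both unimodular, and since $\arctan\frac{y}{y^2 - 1/4} = \frac{1}{y} - \frac{1}{12 y^3} + O(y^{-5})$ one gets $z^k(z-1)^{-k} = e^{-ik/y}\exp\big(\frac{ik}{12 y^3} + O(k y^{-5})\big)$, whence
\begin{equation*}
\Big|\sum_{d \in \mz} \frac{z^k}{(z+d)^k} - \theta\Big(\frac{k}{2\pi y} + \frac{ix}{r}, \frac{i}{r}\Big)\Big| = \big|e^{ik/(12y^3) + O(k/y^5)} - 1\big| + O(e^{-ck^{1/6}}) \asymp \frac{k}{y^3}.
\end{equation*}
Since $k/y^3 > y k^{-2/3}$ precisely when $y < k^{5/12}$ (at $y = k^{2/5}$ this is $k^{-1/5}$ versus $k^{-4/15}$), the claimed error term is genuinely exceeded on the segment $x = 1/2$, $k^{2/5} < y \ll k^{5/12}$. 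So the obstruction you met is not a defect of your argument alone: the paper's own proof has the same gap, and the honest statement is an error term $O\big(y k^{-2/3} + k/y^3\big)$ --- which is exactly what your overlap argument \emph{does} prove. This weaker form suffices for everything downstream: in the proof of Corollary \ref{coro:Hkapproximations} the identical phase mismatch $k/(12y^3) \leq \frac{1}{12}k^{-1/5}$ is absorbed in the step $e^{ik\arctan(1/(2y))}(1 + e^{-ik/y}) = 2\cos(k\arctan\tfrac{1}{2y}) + O(k^{-1/5})$, well within the final $O(k^{-1/6})$ error.
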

%The restriction to $y > k^{2/5}$ in Corollary \ref{coro:dsumapproximationlargey} can in fact be relaxed.  The reason is that 
%if $y \leq k^{2/5}$, then in reference to Corollary \ref{coro:dsumapproximationsmally}, we have
%\begin{equation}
%1 + \frac{z^k}{(z-1)^k} + \frac{z^k}{(z+1)^k} = \theta(\frac{k}{2\pi y} + \frac{ix}{r}, \frac{i}{r})
%\end{equation}
Combining Corollary \ref{coro:dsumapproximationlargey} with Corollary \ref{coro:EisensteinApproxwithc1} gives \eqref{eq:EisensteinApproxTheoremLargey}.  This completes the proof of Theorem \ref{thm:EisensteinApproxTheorem}.

Next we derive an alternative form of the main term.  Using \eqref{eq:JacobithetaModularity} with $w = -x+ \frac{kir}{2 \pi y}  = -x+iy$, and $\tau = ir$,  we obtain
%we use the modularity relation for the theta function, which gives
% \begin{equation}
%  \theta\Big(\frac{k}{2 \pi y} + \frac{ix}{r}, \frac{i}{r}\Big) = r^{1/2} \exp(\frac{\pi}{r}(\frac{kir}{2\pi y} - \frac12)^2) \theta(-\frac12 + \frac{kir}{2\pi y}, ir).
% \end{equation}
\begin{equation}
 \theta\Big(\frac{k}{2 \pi y} + \frac{ix}{r}, \frac{i}{r}\Big) = r^{1/2} \exp\Big(\frac{\pi}{r}(-x+iy)^2\Big) \theta(-x + iy, ir).
\end{equation}
Writing out the definition of the theta function, we deduce
\begin{equation}
 \theta\Big(\frac{k}{2 \pi y} + \frac{ix}{r}, \frac{i}{r}\Big) = r^{1/2} \exp\Big(\frac{\pi}{r}(-x+iy)^2\Big) \sum_{n \in \mz} \exp(-\pi n^2 r -2\pi ny - 2\pi i nx).
\end{equation}
Completing the square, and replacing $n$ by $-n$, it simplifies as
\begin{equation}
\label{eq:thetaFormulaAfterModularity}
 \theta\Big(\frac{k}{2 \pi y} + \frac{ix}{r}, \frac{i}{r}\Big) = r^{1/2} 
 \exp\Big(\frac{-ikx}{y} + \frac{\pi x^2}{r} \Big)
  \sum_{n \in \mz} \exp\Big(-\pi r \Big(n-\frac{k}{2 \pi y}\Big)^2\Big) \exp( 2 \pi i nx).
\end{equation}

Finally, we focus on the largest values of $y$, which are naturally treated with the Fourier expansion.  Rather than directly quoting the Fourier expansion of $E_k$ itself, in light of Lemma \ref{lemma:tailboundCatleast2}, we can proceed as follows.  First we quote the well-known formula (e.g. see \cite[p.16]{Zagier}):
\begin{equation}
\label{eq:Zagierformula}
 \sum_{d \in \mz} (z+d)^{-k} = \frac{(-2\pi i)^k}{\Gamma(k)} \sum_{n=1}^{\infty} n^{k-1} e(nz).
\end{equation}
\begin{mylemma}
\label{lemma:EkApproxFourier}
Suppose $C \geq 1$, and $y \geq 1$. Then
\begin{equation}
\label{eq:EkApproxFourier}
\frac{(-2\pi i y)^k}{\Gamma(k)}  
\sum_{n=1}^{\infty} n^{k-1} e(nz) = \frac{(-2\pi i y)^k}{\Gamma(k)}  
\sum_{|n-\frac{k}{2 \pi y} | \leq C \frac{k^{1/2}}{2\pi y}} n^{k-1} e(nz) + O(e^{-C/4}).
\end{equation}
The implied constant is absolute.
\end{mylemma}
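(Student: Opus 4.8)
The plan is to reduce the assertion to a tail bound for a sum of positive terms and then control that tail by a saddle-point (Chernoff) estimate. Since the prefactor $(-2\pi iy)^k/\Gamma(k)$ is common to both sides and $|e(nz)| = e^{-2\pi n y}$, the triangle inequality reduces the lemma to showing that $\sum a_n \ll e^{-C/4}$, where the sum runs over $n \ge 1$ with $|n - \tfrac{k}{2\pi y}| > C\tfrac{k^{1/2}}{2\pi y}$ and $a_n := \tfrac{(2\pi y)^k}{\Gamma(k)} n^{k-1} e^{-2\pi n y}$. First I would observe that, as a function of a real variable, $\log a_n = \mathrm{const} + (k-1)\log n - 2\pi n y$ is strictly concave with unique maximum at $n^\ast = (k-1)/(2\pi y)$, which differs from the window center $k/(2\pi y)$ by $O(1/y)$. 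Thus $(a_n)$ is a unimodal sequence peaked at the center of the truncation window, and the window radius $Ck^{1/2}/(2\pi y)$ is exactly $C$ times the Gaussian width $k^{1/2}/(2\pi y)$ of the peak; the content of the lemma is that the mass beyond $C$ widths is exponentially small.

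For the right tail $n \ge N_+ := (k + Ck^{1/2})/(2\pi y)$ I would use, for any $0 < \lambda < 2\pi y$, the bound $\sum_{n \ge N_+} a_n \le e^{-\lambda N_+}\sum_{n\ge1} a_n e^{\lambda n}$, and estimate the moment $\sum_n a_n e^{\lambda n}$ by comparing $t^{k-1}e^{-(2\pi y - \lambda)t}$ with its integral via the unimodal comparison \eqref{eq:elementaryBound}, which yields $\sum_n a_n e^{\lambda n} \ll (1 - \tfrac{\lambda}{2\pi y})^{-k}$. Writing $\lambda = 2\pi y\beta$ and optimizing over $\beta$ gives the minimizer $\beta = \tfrac{w}{1+w}$ with $w := Ck^{-1/2}$, and the resulting exponent is $-k\big(w - \log(1+w)\big)$. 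The elementary inequalities $w - \log(1+w) \ge \tfrac{w^2}{2(1+w)}$ for $0 < w \le 1$ (giving a bound $e^{-C^2/4} \le e^{-C/4}$ when $C \le k^{1/2}$) and $w - \log(1+w) \ge \tfrac{w}{4}$ for $w \ge 1$ (giving $e^{-Ck^{1/2}/4} \le e^{-C/4}$ when $C \ge k^{1/2}$) then deliver the bound for the right tail. The left tail is only nonempty when $C < k^{1/2}$ (otherwise the lower window edge $N_- = (k - Ck^{1/2})/(2\pi y)$ is $\le 0$), and there the symmetric argument with a negative exponential moment produces the even stronger Gaussian bound $e^{-C^2/4}$.

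The step I expect to be most delicate is keeping the implied constant absolute and uniform in $y \ge 1$. The exponent $-k(w - \log(1+w))$ is entirely satisfactory, but the exponential moments $\sum_n a_n e^{\lambda n}$ must be bounded by an absolute constant, and the unimodal comparison \eqref{eq:elementaryBound} controls them only up to the value of the integrand at its peak, which is of size $\asymp 1 + y/k^{1/2}$. Thus the argument is cleanest when $y \ll k^{1/2}$, where the window is at least as wide as the spacing between consecutive integers and the sum is genuinely well approximated by its Gaussian envelope; the transition regime $y \asymp k^{1/2}$, in which the window can be narrower than unit spacing so that the sum-to-integral comparison is most lossy, is where the uniform control of the constant requires the most care. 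An alternative that avoids moment generating functions is to bound $a_n/a_{n^\ast} = \exp\big((k-1)(\log v - v + 1)\big)$ with $v = n/n^\ast$ directly and sum the resulting geometric-type tail, using $\erfc(x) \ll e^{-x^2}$ as in the preceding lemma; this leads to the same $e^{-C/4}$ bound.
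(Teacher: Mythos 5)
Your route is genuinely different from the paper's. The paper bounds each discarded tail by an incomplete-gamma integral (the integral test applies because $t^{k-1}e^{-2\pi ty}$ is monotone past its peak) and then quotes Temme's uniform asymptotics for $Q(a,x)$; you instead run a Chernoff/exponential-moment argument directly on the sum. Quantitatively the two routes coincide: your optimized exponent $k\bigl(w-\log(1+w)\bigr)$, $w=Ck^{-1/2}$, is exactly Temme's $\tfrac12\eta^2 a$ with $\mu=(x-a)/a=w$, and you finish with the same elementary inequalities $w-\log(1+w)\ge\max(w^2/4,\,w/4)$. So your argument is an elementary, self-contained substitute for the appeal to Temme, which is a real merit.

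There is, however, a gap, and it is exactly the point you flagged without resolving: \eqref{eq:elementaryBound} does not give $\sum_n a_ne^{\lambda n}\ll(1-\tfrac{\lambda}{2\pi y})^{-k}$; it gives this plus a peak term of size $\asymp\tfrac{y}{\sqrt{k}}(1-\tfrac{\lambda}{2\pi y})^{-(k-1)}$, so what your method actually proves is the tail bound $\ll(1+yk^{-1/2})\,e^{-k(w-\log(1+w))}$. Two corrections to your discussion of this. First, the delicate regime is not $y\asymp k^{1/2}$ (there the factor is $O(1)$ and harmless) but $y/\sqrt{k}\to\infty$, worst at $y\asymp k$. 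Second, your closing claim that a pointwise-ratio argument ``leads to the same $e^{-C/4}$ bound'' cannot be right, because the factor $1+yk^{-1/2}$ is not removable: the lemma as stated, with an absolute implied constant, is false. Take $C=1$ and $z=iy$ with $2\pi y=k+2\sqrt{k}$: the window $|n-\tfrac{k}{2\pi y}|\le\tfrac{\sqrt{k}}{2\pi y}$ then contains no positive integer, so the right-hand side of \eqref{eq:EkApproxFourier} is $O(e^{-1/4})$, while the discarded $n=1$ term alone equals $(2\pi y)^k e^{-2\pi y}/\Gamma(k)\sim(2\pi)^{-1/2}e^{-2}\sqrt{k}$ by Stirling. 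The paper's own proof has the same unacknowledged loss: writing $n_1$ for the first discarded integer, taking $N=n_1$ in the integral test correctly bounds $\sum_{n\ge n_1+1}$ by $Q(k,2\pi n_1y)\ll e^{-C/4}$, but the term $n=n_1$ itself is never estimated --- to include it one must take $N=n_1-1$, and then the hypothesis $2\pi Ny-k\ge Ck^{1/2}$ can fail by as much as $2\pi y$. So the statement that both you and the paper actually prove is \eqref{eq:EkApproxFourier} with error $O\bigl((1+yk^{-1/2})e^{-C/4}\bigr)$, and that is the version you should record; it costs nothing downstream, since Corollary \ref{coro:EisensteinApproxFourierLargey} takes $C=(\log k)^2$ (the extra factor is absorbed for $y\le e^{(\log k)^2/8}$, and for larger $y$ the whole sum is already negligible), and the conclusion of Corollary \ref{coro:Hktrivialbound} carries the factor $1+yk^{-1/2}$ anyway.
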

Remarks.  To gauge the size of the main term on the right hand side of \eqref{eq:EkApproxFourier}, we 
suppose for simplicity that $\frac{k}{2\pi y}$ is a positive integer.  Then this term alone contributes to the right hand side a quantity of the size
\begin{equation}
\label{eq:EkSingleTermSizeFourier}
y^k k^{1/2} \Big(\frac{2 \pi e}{k} \Big)^k \Big(\frac{k}{2 \pi y}\Big)^{k-1} e^{-k} \asymp \frac{y}{\sqrt{k}}.
\end{equation}
Therefore, the error term is good if $y \gg \sqrt{k}$ with a large enough implied constant (which then means the main term in \eqref{eq:EkApproxFourier} does indeed consist of a single term).  In the complementary range $y \ll \sqrt{k}$, with a small enough implied constant, we have that all the terms in the sum are of size \eqref{eq:EkSingleTermSizeFourier}, but now there are $\approx  k^{1/2}/y$ such terms, showing that the right hand side is roughly of size $\asymp 1$.

\begin{proof}
The function $f(t) = t^{k-1} \exp(-2 \pi t y)$ is initially increasing and eventually decreasing, with a unique local maximum occuring at $t_0 = \frac{k-1}{2 \pi y}$.  If $N > t_0$, then by the integral comparison test,
\begin{equation}
\sum_{n=N+1}^{\infty} n^{k-1} \exp(- 2 \pi n y) \leq \int_N^{\infty} t^{k-1} \exp(- 2\pi ty) dt.
\end{equation}
Using the notation
\begin{equation}
 Q(a,x) = \frac{1}{\Gamma(a)} \int_x^{\infty} t^{a} e^{-t} \frac{dt}{t}
\end{equation}
for the normalized incomplete gamma function, we have that
\begin{equation}
\Big| \frac{(2\pi  y)^k}{\Gamma(k)}  \sum_{n \geq N+1} n^{k-1} e(nz) \Big| \ll  Q(k, 2 \pi N y).
\end{equation}
By a similar argument, if $M < t_0$, then
\begin{equation}
\Big| \frac{(2\pi  y)^k}{\Gamma(k)}   \sum_{n \leq M-1} n^{k-1} e(nz) \Big| \ll  P(k, 2 \pi M y),
\end{equation}
where $P(a,x)$ is the complementary incomplete gamma function defined by
\begin{equation}
 P(a,x) = \frac{1}{\Gamma(a)} \int_0^{x} t^{a} e^{-t} \frac{dt}{t} = 1 - Q(a,x).
\end{equation}
One may derive from work of Temme \cite{Temme} that if $x > a$, then 
\begin{equation}
\label{eq:Qbound}
Q(a,x) \ll e^{-\frac{(x-a)^2}{4a}} + e^{-\frac{|x-a|}{4}}.
\end{equation}
For this, \cite[(1.4)]{Temme} gives 
\begin{equation}
Q(a,x) \ll \erfc(\eta (a/2)^{1/2}) + \exp(-\tfrac12 \eta^2 a) \ll \exp(-\tfrac12 \eta^2 a),
\end{equation}
where $\frac12 \eta^2 = \mu - \log(1 + \mu)$, and $\mu = \frac{x-a}{a}$.  It is easy to check that 
\begin{equation}
t - \log(1+ t) \geq 
\begin{cases}
t^2/4, \qquad  0 \leq t \leq 1, \\
t/4, \qquad  1 \leq t < \infty,
\end{cases}
\end{equation}
whence we derive
\begin{equation}
 Q(a,x) \ll \exp\Big(-a \max\Big( \frac{(x-a)^2}{4a^2}, \frac{|x-a|}{4a}\Big)\Big),
\end{equation}
which is equivalent to \eqref{eq:Qbound}.
The same bound holds for $P(a,x)$ provided $a > x$.  In our application, we have $a=k$, and $x = 2 \pi Ny$.  If $2\pi Ny - k \geq C k^{1/2}$, then Temme's bound shows
\begin{equation}
Q(k, 2\pi N y) \ll \exp(-C/4),
\end{equation}
and likewise for $P(k, 2\pi My)$ provided $k- 2\pi My  \geq C k^{1/2}$.
\end{proof}

Lemma \ref{lemma:EkApproxFourier} leads to a very close approximation if say $y \geq k^{3/5}$ (anything slightly larger than $k^{1/2}$ would do):
\begin{mycoro}
\label{coro:EisensteinApproxFourierLargey}
 Suppose $z \in \mathcal{F}$ and $y \geq k^{3/5}$.  Then we have
 \begin{equation}
 G_k(z) = \frac{(-2\pi i z)^k}{\Gamma(k)}  
\sum_{|n-\frac{k}{2 \pi y} | \leq (\log k)^2 \frac{k^{1/2}}{2\pi y}} n^{k-1} e(nz) + O(\exp(-c(\log^2 k))),
 \end{equation}
 where $c > 0$ is some constant independent of $y$ and $k$.
\end{mycoro}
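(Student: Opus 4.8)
The plan is to assemble this corollary directly from three facts already established: Corollary~\ref{coro:EisensteinApproxwithc1}, which reduces $G_k$ to the single-$c$ lattice sum; Zagier's formula \eqref{eq:Zagierformula}, which converts that sum into a Fourier-type series; and Lemma~\ref{lemma:EkApproxFourier}, which truncates the Fourier series to a short window around $n \approx \frac{k}{2\pi y}$. The proof is essentially a matter of chaining these together and bookkeeping the error terms.

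First I would invoke Corollary~\ref{coro:EisensteinApproxwithc1} to write
\begin{equation}
G_k(z) = z^k \sum_{d \in \mz}(z+d)^{-k} + O\Big(3^{-k/2}\Big(1 + \tfrac{y}{\sqrt{k}}\Big)\Big),
\end{equation}
and then substitute \eqref{eq:Zagierformula}, absorbing the prefactor via $z^k(-2\pi i)^k = (-2\pi i z)^k$, to get
\begin{equation}
G_k(z) = \frac{(-2\pi i z)^k}{\Gamma(k)} \sum_{n=1}^{\infty} n^{k-1} e(nz) + O\Big(3^{-k/2}\Big(1 + \tfrac{y}{\sqrt{k}}\Big)\Big).
\end{equation}
For $y \geq k^{3/5}$ (and $y$ in the range of interest, say polynomial in $k$) the displayed error is super-exponentially small, hence far below the claimed $O(\exp(-c\log^2 k))$, so it may be absorbed at the end. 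Next I would apply Lemma~\ref{lemma:EkApproxFourier} with the choice $C = (\log k)^2$, which satisfies $C \geq 1$ for $k$ large. This truncates the full series to the stated window $|n - \tfrac{k}{2\pi y}| \leq (\log k)^2 \tfrac{k^{1/2}}{2\pi y}$ with a truncation error of size $O(e^{-C/4}) = O(\exp(-\tfrac14 \log^2 k))$, which yields a valid constant $c = \tfrac14$.

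The one point requiring genuine care --- and the main (albeit minor) obstacle --- is that Lemma~\ref{lemma:EkApproxFourier} is normalized with the weight $(-2\pi i y)^k/\Gamma(k)$, whereas the corollary carries $(-2\pi i z)^k/\Gamma(k)$. To transfer the lemma's truncation identity I would multiply it through by $(z/y)^k$; the concern is that this could inflate the error $O(e^{-C/4})$ by a factor $(|z|/y)^k$, which is the $k$-th power of a quantity exceeding $1$. The resolution is that on $\mathcal{F}$ one has $|z|^2 = x^2 + y^2 \leq y^2 + \tfrac14$, so that
\begin{equation}
\Big(\tfrac{|z|}{y}\Big)^k \leq \Big(1 + \tfrac{1}{4y^2}\Big)^{k/2} \leq \exp\Big(\tfrac{k}{8y^2}\Big) \leq \exp\Big(\tfrac{1}{8}k^{-1/5}\Big) = 1 + O(k^{-1/5})
\end{equation}
once $y \geq k^{3/5}$. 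Thus the reweighting multiplies the truncation error by a bounded factor, leaving it $O(e^{-C/4})$, and it introduces no new main term since the kept window is transferred verbatim. Collecting the two error contributions --- the negligible $O(3^{-k/2}(1 + y/\sqrt{k}))$ from the lattice reduction and the dominant $O(\exp(-\tfrac14\log^2 k))$ from the truncation --- completes the proof.
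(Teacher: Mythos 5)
Your proposal is correct and follows essentially the same route as the paper: the paper's proof is precisely the chain Corollary~\ref{coro:EisensteinApproxwithc1} $+$ \eqref{eq:Zagierformula} $+$ Lemma~\ref{lemma:EkApproxFourier} with $C=(\log k)^2$, with the sole remark that $|z/y|^k \asymp 1$ for $y \gg \sqrt{k}$ and $|x|\leq 1/2$, which is exactly the reweighting step you verify via $(|z|/y)^k \leq \exp(k/8y^2) = 1 + O(k^{-1/5})$. Your write-up just makes the bookkeeping explicit.
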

For this, all we need to notice is that for $y \gg \sqrt{k}$ and $|x| \leq 1/2$, that $|z/y|^k \asymp 1$.

Next we specialize
Theorem \ref{thm:EisensteinApproxTheorem} to the case where $x=1/2$ which is required for understanding zeros on the sides of the fundamental domain.  The formulas simplify.  In polar coordinates, $1/2 + iy = R e^{i \theta}$ where $R = (1/4 + y^2)^{1/2}$.  With $\phi$ defined by $\theta + \phi = \frac{\pi}{2}$ (geometrically, $\phi$ is the angle measured from the $y$-axis), then $\tan(\phi) = \frac{1}{2y}$ (and of course $\tan(\theta) = 2y$).
\begin{mycoro}
\label{coro:Hkapproximations}
 Suppose that $z \in \mathcal{F}$.  If $y \leq k^{2/5}$, then 
\begin{align}
\label{eq:EisensteinSideApproxsmally}
H_k(1/2+iy) &= 2 (-1)^{k/2} \cos(k \phi) + O(\exp(-k^{1/6}))
\\
\label{eq:EisensteinSideApproxsmallySecondForm}
&=2 \cos(k \theta ) + O(\exp(-k^{1/6})).
\end{align}
If $k^{2/5} < y \ll k^{1/2}$, then 
\begin{equation}
\label{eq:EisensteinSideApproxsmallishy}
\Big| H_k(1/2+iy) - 2 (-1)^{k/2} \cos(k \phi) \Big| \leq 
 \Phi_0(r) +  O(k^{-1/6}),
\end{equation}
where recall $r = \frac{2 \pi y^2}{k}$, and 
\begin{equation}
 \Phi_0(r) = \sum_{\substack{n \in \mz \\ n \neq 0,-1}} \exp\Big(-\frac{\pi}{r} (n^2 + n)\Big).
\end{equation}
If $k^{1/2} \ll y \ll k^{3/5}$, and $y=y_N$ satisfies 
\begin{equation}
\label{eq:yNapproxformula}
 y_N = \frac{k}{2 \pi N} (1 + O(k^{-1})),
\end{equation}
for $N \in \mathbb{N}$, then
\begin{equation}
\label{eq:EisensteinSideApproxlargey}
  \Big| H_k(1/2+iy_N)  - (-1)^{N+\frac{k}{2}}  r^{1/2} 
 \exp\Big(\frac{\pi }{4r}\Big) \Big|  \leq  r^{1/2} 
 \exp\Big(\frac{\pi }{4r}\Big) \Phi_1(r) + O(k^{-1/15}),
 \end{equation}
where
\begin{equation}
 \Phi_1(r) = %r^{1/2} \exp\Big(\frac{\pi }{4r}\Big) 
 \sum_{n \neq 0} \exp(-\pi r n^2).
\end{equation}
Finally, if $k^{3/5} \ll y \ll k$, and $y=y_N$ satisfies \eqref{eq:yNapproxformula}, 
%\begin{equation}
%\label{eq:yNapproxformula2}
% y_N = \frac{k}{2 \pi N} (1 + O(k^{-6/5})),
%\end{equation}
% \begin{equation}
% \label{eq:yNapproxformula}
% \frac{k}{2 \pi y_N} = N \in \mathbb{N}, 
% \end{equation}
then 
\begin{equation}
\label{eq:EisensteinSideApproxlargesty}
H_k(1/2+iy_N) = (-1)^{N+\frac{k}{2}} r^{1/2} (1 + O(k^{-1/5})).
 \end{equation}
\end{mycoro}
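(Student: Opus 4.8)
The plan is to derive every formula by specializing Theorem~\ref{thm:EisensteinApproxTheorem} and Corollary~\ref{coro:EisensteinApproxFourierLargey} to $x = \half$ and converting $G_k$ to $H_k$. Writing $z = \half + iy = Re^{i\theta}$ with $R = (\tfrac{1}{4} + y^2)^{1/2}$, we have $z^k = R^k e^{ik\theta}$, so $G_k(z) = e^{ik\theta} H_k(z)$, i.e. $H_k(z) = e^{-ik\theta} G_k(z)$. The single identity that drives all four cases is the phase relation $e^{-ik\theta} = (-1)^{k/2} e^{ik\phi}$, valid because $\theta + \phi = \tfrac{\pi}{2}$ and $k$ is even; combined with the elementary expansion $k\phi = k\arctan(\tfrac{1}{2y}) = \tfrac{k}{2y} + O(k/y^3)$, this lets me trade the geometric phase $\theta$ for the modular phases that appear in the theta and Fourier expansions.

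For $y \le k^{2/5}$ I insert $z = \half + iy$ into \eqref{eq:EisensteinApproxTheoremSmally}. Since $z - 1 = -\half + iy = R e^{i(\pi - \theta)}$ has the same modulus as $z$, one gets $z^k/(z-1)^k = e^{2ik\theta}$, while $|z/(z+1)|^k \ll \exp(-k/(\tfrac{9}{4} + y^2)) \ll \exp(-ck^{1/5})$ is absorbed into the error. Hence $G_k = 1 + e^{2ik\theta} + O(\exp(-k^{1/6}))$, and multiplying by $e^{-ik\theta}$ produces $2\cos(k\theta)$, which is \eqref{eq:EisensteinSideApproxsmallySecondForm}; the complementary-angle identity $2\cos(k\theta) = 2(-1)^{k/2}\cos(k\phi)$ then gives \eqref{eq:EisensteinSideApproxsmally}.

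In the two middle ranges I use \eqref{eq:EisensteinApproxTheoremLargey}, whose error $O(y/k^{2/3})$ is $O(k^{-1/6})$ when $y \ll k^{1/2}$ and $O(k^{-1/15})$ when $y \ll k^{3/5}$, matching the stated remainders. For $k^{2/5} < y \ll k^{1/2}$ the parameter $r$ is small, so I expand $\theta$ by its defining series \eqref{eq:JacobithetaDefinition}: at $x = \half$ the $n$-th term has modulus $\exp(-\tfrac{\pi}{r}(n^2+n))$, so only $n = 0$ and $n = -1$ have modulus one and the remaining tail is exactly $\Phi_0(r)$. After multiplying by $e^{-ik\theta}$ and replacing $k/y$ by $2k\phi + O(k/y^3)$, the surviving terms collapse to $2(-1)^{k/2}\cos(k\phi)$, yielding \eqref{eq:EisensteinSideApproxsmallishy}. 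For $k^{1/2} \ll y \ll k^{3/5}$ the parameter $r$ is large, so I instead use the modularity form \eqref{eq:thetaFormulaAfterModularity}; at $x = \half$ the summand is $(-1)^n \exp(-\pi r(n - \tfrac{k}{2\pi y})^2)$, the hypothesis \eqref{eq:yNapproxformula} forces $\tfrac{k}{2\pi y} = N + O(k^{-1/2})$ so the $n = N$ term dominates and the rest is controlled by $\Phi_1(r)$, while the phase $e^{-ik\theta}\exp(-\tfrac{ik}{2y}) = (-1)^{k/2}(1 + O(k/y^3))$ supplies the sign $(-1)^{N + k/2}$; this is \eqref{eq:EisensteinSideApproxlargey}.

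Finally, for $k^{3/5} \ll y \ll k$ I apply Corollary~\ref{coro:EisensteinApproxFourierLargey}. Under \eqref{eq:yNapproxformula} the summation window has length $\ll (\log k)^2 k^{-1/10}$, so for large $k$ it retains only $n = N$, giving $H_k = e^{-ik\theta}\tfrac{(-2\pi i z)^k}{\Gamma(k)} N^{k-1} e(Nz) + O(\exp(-c\log^2 k))$. Using $e(Nz) = (-1)^N e^{-2\pi Ny}$ and $e^{-ik\theta}(-2\pi i z)^k = (2\pi)^k R^k (-1)^{k/2}$, a Stirling estimate reduces $(2\pi)^k R^k N^{k-1} e^{-2\pi N y}/\Gamma(k)$ to $r^{1/2}(1 + O(k^{-1/5}))$, which is \eqref{eq:EisensteinSideApproxlargesty}; the factor $\exp(\tfrac{\pi}{4r}) = 1 + O(1/r) = 1 + O(k^{-1/5})$ present in the theta version is here absorbed into the error, explaining its absence. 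The main obstacle throughout is the phase and sign bookkeeping: I must verify in each range that the Taylor remainder $k(\phi - \tfrac{1}{2y}) = O(k/y^3)$ is negligible and that $e^{-ik\theta}$ combines cleanly with the theta, modularity, or Fourier phase to produce the exact factors $(-1)^{k/2}$ and $(-1)^{N + k/2}$; the last case additionally requires the Stirling expansion to be carried to enough accuracy to yield precisely the $O(k^{-1/5})$ relative error.
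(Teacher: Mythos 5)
Your proposal is correct and follows essentially the same route as the paper's own proof: specialize Theorem \ref{thm:EisensteinApproxTheorem}, the modularity form \eqref{eq:thetaFormulaAfterModularity}, and Corollary \ref{coro:EisensteinApproxFourierLargey} to $x=1/2$, convert $G_k$ to $H_k$ via $e^{-ik\theta}=(-1)^{k/2}e^{ik\phi}$, and treat the four ranges with (respectively) the small-$y$ formula, the theta-series definition keeping $n=0,-1$, the modular form of the theta function keeping $n=N$ with tail $\Phi_1(r)$, and the single-term Fourier expansion plus Stirling. The only step your sketch compresses is in the range $k^{1/2}\ll y\ll k^{3/5}$, where bounding the remaining terms by $\Phi_1(r)$ first requires removing the $O(k^{-1}N)$ shift in the Gaussian centers (this is what the additive $O(k^{-1/15})$ in \eqref{eq:EisensteinSideApproxlargey} absorbs), a verification your closing paragraph correctly flags as part of the remainder bookkeeping.
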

In Figure \ref{fig:Phi0} we include plots of $\Phi_0$ and $\Phi_1$. 
\begin{figure}[h]
\begin{center}
\scalebox{0.7}[0.7]{\includegraphics{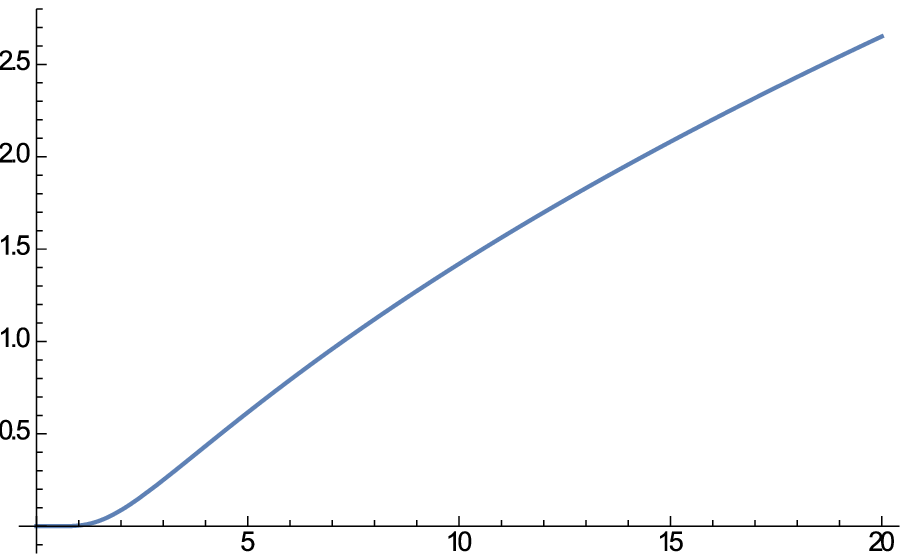}}
\scalebox{0.7}[0.7]{\includegraphics{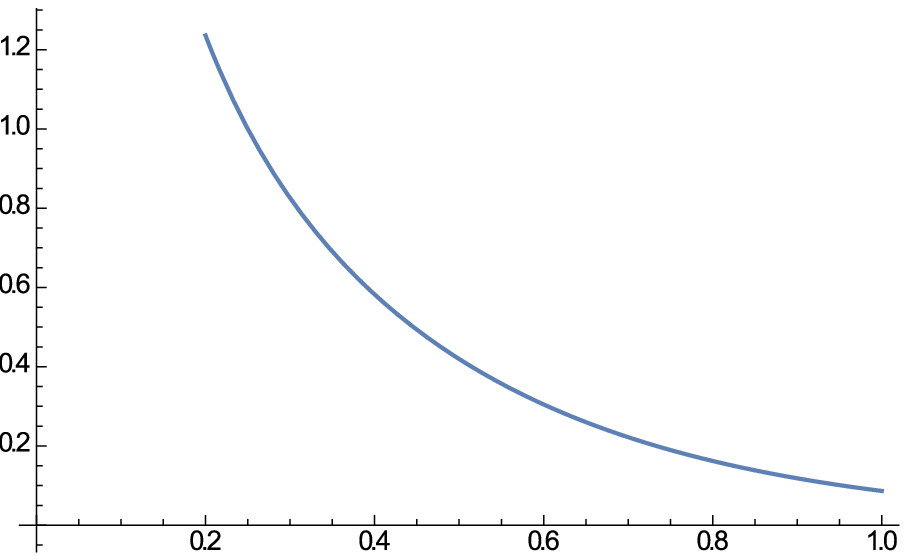}}
\caption{Plot of $\Phi_0(r)$ (left) and $\Phi_1(r)$ (right)}
\label{fig:Phi0}
\end{center}
\end{figure}
%In particular, we read off from the graph that $\Phi_0(r) < 1.5$ for $r \leq 10$, and $\Phi_1(r) < 0.5$ for $r \geq 0.5$.
\begin{proof}
Note 
\begin{equation}
 \Big(\frac{|1/2+iy|}{1/2+iy}\Big)^k = e^{-i \theta k} = (-1)^{k/2}e^{ik \phi} = (-1)^{k/2} e^{ik \arctan(\frac{1}{2y})},
\end{equation}
and so
\begin{equation}
 \Big(\frac{|1/2+iy|}{-1/2+iy}\Big)^k =e^{i \theta k} = (-1)^{k/2} e^{-ik \phi} =  (-1)^{k/2} e^{-ik \arctan(\frac{1}{2y})}.
\end{equation}
In the range $y \leq k^{2/5}$, then using Corollaries \ref{coro:EisensteinApproxwithc2} and \ref{coro:EisensteinApproxwithc2}, %\eqref{eq:largeYapproximation}, 
we derive
\begin{equation}
 G_k(1/2 + iy) = 1 + \frac{(1/2+iy)^k}{(-1/2+iy)^k} + O(y\exp(-k^{1/5})).
\end{equation}
Since $H_k(z) = (|z|/z)^k G_k(z)$, we then derive
\begin{equation}
 H_k(1/2+iy) = 2 (-1)^{k/2} \cos(k \arctan \tfrac{1}{2y}) + O(y\exp(-k^{1/5})),
\end{equation}
which gives \eqref{eq:EisensteinSideApproxsmally}.

Now suppose $k^{2/5} < y \ll k^{2/3}$.  Then \eqref{eq:EisensteinApproxTheoremLargey} gives
\begin{equation}
\label{eq:EisensteinApproxWithTheta}
 H_k(1/2+iy) = (-1)^{k/2} e^{ik \arctan(\frac{1}{2y})} \theta\Big(\frac{k}{2 \pi y} + \frac{i}{2r}, \frac{i}{r}\Big) + O(k^{-2/3} y),
\end{equation}
which upon writing the definition of the theta function, gives
\begin{equation}
 H_k(1/2+iy) =  (-1)^{k/2} e^{ik \arctan(\frac{1}{2y})} \sum_{n \in \mz} \exp\Big(-\frac{\pi}{r} (n^2 + n)\Big) e^{ink/y} + O(k^{-2/3} y).
\end{equation}
Note that $n^2 +n \geq 0$ for all $n \in \mz$, and that for $n \neq 0, -1$, then $n^2 + n \geq 2$.  Therefore, if $r \gg 1$ we get a good approximation by selecting the terms $n=0,-1$, and trivially bounding the rest.  
This gives
\begin{equation}
|H_k(1/2+iy) - (-1)^{k/2} e^{ik \arctan(\frac{1}{2y})} (1+e^{-ik/y})| \leq  \Phi_0(r)  +  O(k^{-2/3} y).
\end{equation}
%where $\gamma \in \mc$ has absolute value $\leq 1$.
Since $y \gg k^{2/5}$, and $\arctan(\frac{1}{2y}) = \frac{1}{2y} + O(y^{-3})$, we have
\begin{equation}
 e^{ik \arctan(\frac{1}{2y})} (1+e^{-ik/y}) = 2  \cos(k \arctan \tfrac{1}{2y}) + O(k^{-1/5}).
\end{equation}
This gives \eqref{eq:EisensteinSideApproxsmallishy}, using $k^{-2/3} y \ll k^{-1/6}$.

 In the region $k^{1/2} \ll y \ll k^{3/5}$ (which implies
 $r \gg 1$), then we do not get a close approximation by dropping all terms but $n=0,-1$.  In this case, we instead use \eqref{eq:thetaFormulaAfterModularity} in \eqref{eq:EisensteinApproxWithTheta}.  
Simplifying, we get
 \begin{equation}
 H_k(1/2+iy)
 =  (-1)^{k/2} r^{1/2} 
 \exp\Big(\frac{\pi }{4r}\Big) 
  \sum_{n \in \mz} (-1)^n \exp\Big(-\pi r \Big(n-\frac{k}{2 \pi y}\Big)^2\Big)  + O(k^{-1/15}).
 \end{equation}
Under the additional assumption \eqref{eq:yNapproxformula}, we may change variables $n \rightarrow n+N$, giving
\begin{equation}
\label{eq:HkyNapproxMiddleOfArgument}
 H_k(1/2+iy_N)
 = (-1)^{N+\frac{k}{2}}  r^{1/2} 
 \exp\Big(\frac{\pi }{4r}\Big) 
  \sum_{n \in \mz} (-1)^n  \exp(-\pi r (n + O(k^{-1}N))^2)  + O(k^{-1/15}).
 \end{equation}
 Note that $k^{2/5} \ll N \ll k^{1/2}$, so in particular $k^{-1} N \ll k^{-1/2}$.  
 
We now show that in \eqref{eq:HkyNapproxMiddleOfArgument} we may remove the error term in the exponential without making a new error term.  
 For $n=0$, we have
 \begin{equation}
  \exp(O(r k^{-2} N^2)) = 1 + O(k^{-1}), 
 \end{equation}
 which leads to an acceptable error term since $k^{-1} r^{1/2} \ll y_N k^{-3/2} \ll  k^{-9/10}$.
 Meanwhile,
for $n \neq 0$ we may assume $n \ll r^{-1/2} (\log k)^2$, whence $k^{-1} r |n| N \ll k^{-1/2} (\log k)^2$ .  Then we have
\begin{multline}
 \sum_{0 < |n| \ll \frac{(\log k)^2}{r^{1/2}} } (-1)^n  \exp(-\pi r (n + O(k^{-1}N))^2) 
 \\
 = \sum_{0 < |n| \ll \frac{(\log k)^2}{r^{1/2}}} (-1)^n  \exp(-\pi r n^2) ( 1 + O(k^{-1} r |n| N))
 = \sum_{n \neq 0}  (-1)^n  \exp(-\pi r n^2) + O(k^{-1/2}).
\end{multline}
% \begin{equation}
%  \exp\Big(O\Big(r n \frac{N}{k}\Big)\Big) = 1 + O\Big(\frac{rnN}{k}\Big) = 1 + O\Big(k^{-1/2} (\log k)^2\Big) = 1 + O(k^{-1/15}).
% \end{equation}
That is, we have shown
\begin{equation}
\label{eq:HkyNapproxMiddleOfArgument2}
 H_k(1/2+iy_N)
 = (-1)^{N+\frac{k}{2}}  r^{1/2} 
 \exp\Big(\frac{\pi }{4r}\Big) 
  \sum_{n \in \mz} (-1)^n  \exp(-\pi r n^2)  + O(k^{-1/15}).
 \end{equation}

Now the term $n=0$ gives a good approximation, so by bounding the tail trivially we obtain \eqref{eq:EisensteinSideApproxlargey}.

Finally we turn to \eqref{eq:EisensteinSideApproxlargesty}.  By Corollary \ref{coro:EisensteinApproxFourierLargey}, we have that $n=N$ is the only term appearing in the sum, giving
\begin{equation}
\label{eq:SomeRandomFormula}
 H_k(1/2+iy_N) = \frac{(-2\pi i |1/2+iy_N|)^k}{\Gamma(k)}  
 N^{k-1} (-1)^N \exp(- 2 \pi N y_N) + O(\exp(-c(\log^2 k))).
\end{equation}
To simplify this, we have
\begin{equation}
 (-i |1/2 + iy_N|)^k = (-1)^{k/2} y_N^k \exp( O(y_N^{-2} k))
 = (-1)^{k/2} y_N^k (1 + O(k^{-1/5}),
\end{equation}
since $y \gg k^{3/5}$.  Define $\delta$ by $N = \frac{k}{2\pi y_N}(1 + \delta)$, whence \eqref{eq:yNapproxformula} gives $\delta \ll k^{-1}$.  The  main term on the right hand side of \eqref{eq:SomeRandomFormula} becomes
\begin{equation}
\label{eq:coffeecup}
(-1)^{N+k/2} \frac{(2\pi)^k}{\Gamma(k)} N^{k-1} y_N^k \exp(-2 \pi N y_N)
= \frac{(-1)^{N+\frac{k}{2}}}{N} \frac{(k/e)^k}{\Gamma(k) }  (1 + \delta)^k \exp(-k\delta).
\end{equation}
Note that $(1+\delta)^k \exp(-k \delta) = \exp(O(k \delta^2)) = 1 + O(k^{-1})$.
Using Stirling's formula, \eqref{eq:coffeecup} %on the right hand side of \eqref{eq:SomeRandomFormula} 
simplifies as
\begin{equation}
 \frac{(-1)^{N+\frac{k}{2}}}{N} \frac{(k/e)^k}{\Gamma(k)} (1 + O(k^{-1/5})) = (-1)^{N+\frac{k}{2}} r^{1/2} (1 + O(k^{-1/5})),
\end{equation}
as desired.
\end{proof}

We shall also find it convenient to record the following ``trivial" bound.
\begin{mycoro}
\label{coro:Hktrivialbound}
Suppose $z = 1/2 + iy \in \mathcal{F}$.  
Then $H_k(z) \ll (1 + \frac{y}{\sqrt{k}})$.
\end{mycoro}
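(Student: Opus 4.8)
The plan is to reduce the bound to a triangle-inequality estimate on the $d$-sum and then control it by a carefully chosen integral comparison, the main subtlety being that the estimate must be uniform over \emph{all} $y \ge \tfrac{\sqrt3}{2}$ (the corollary places no upper restriction on $y$, so the ranges covered by Corollary \ref{coro:Hkapproximations} do not suffice).

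First, since $H_k(z) = (|z|/z)^k G_k(z)$ and $\big||z|/z\big| = 1$, we have $|H_k(z)| = |G_k(z)|$. By Corollary \ref{coro:EisensteinApproxwithc1} and the triangle inequality,
\[
|H_k(z)| \le |z|^k \sum_{d \in \mz} |z+d|^{-k} + O\Big(3^{-k/2}\Big(1 + \tfrac{y}{\sqrt k}\Big)\Big),
\]
so it suffices to bound $S := |z|^k\sum_{d}|z+d|^{-k}$ by $O(1 + y/\sqrt k)$. With $z = 1/2+iy$ we have $|z+d|^2 = (d+\tfrac12)^2 + y^2$ and $|z|^2 = \tfrac14 + y^2$, hence $S = \sum_{d\in\mz} b(d)$ where $b(t) = \big(\tfrac{1/4+y^2}{(t+1/2)^2+y^2}\big)^{k/2}$.

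The function $b$ peaks at the half-integer $t = -\tfrac12$, where $b = 1$, so the two terms $d = 0, -1$ each equal $1$ and contribute $2$ to $S$. For the remainder I would exploit the symmetry $b(d) = b(-1-d)$ to write $S = 2 + 2\sum_{d\ge 1} b(d)$ and bound the tail by the integral test, $\sum_{d\ge1}b(d) \le \int_0^\infty b(t)\,dt$. The step most easily botched — and the crux of the matter — is that one must \emph{not} apply the crude bound \eqref{eq:elementaryBound} directly: its integral term $\int_{-\infty}^\infty b$ straddles the peak at $t=-\tfrac12$, which the integer lattice never samples, and this overcounting manufactures a spurious factor $(1 + \tfrac{1}{4y^2})^{k/2}$ that is exponentially large for bounded $y$ (e.g. $(4/3)^{k/2}$ at $y=\tfrac{\sqrt3}{2}$). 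Using the tail integral starting at $t = \tfrac12$ sidesteps the peak entirely.

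To evaluate $\int_0^\infty b(t)\,dt = (\tfrac14+y^2)^{k/2}\int_{1/2}^\infty (s^2+y^2)^{-k/2}\,ds$, I would rescale $s = yu$, combine the powers, and rewrite the integral as $y\int_{0}^\infty\big(\tfrac{1 + 1/(4y^2)}{1 + (1/(2y)+v)^2}\big)^{k/2}\,dv$. A short computation (using $y \ge \tfrac{\sqrt3}{2}$, so $\tfrac{1}{4y^2}\le\tfrac13$) shows this integrand is bounded by $(1 + \tfrac34 v^2)^{-k/2}$, whose integral is $O(k^{-1/2})$ by the same beta-integral computation used in the proof of Lemma \ref{lemma:tailboundCatleast2}, namely $\int_{-\infty}^\infty(u^2+1)^{-k/2}\,du \ll k^{-1/2}$. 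Hence $\int_0^\infty b(t)\,dt \ll y/\sqrt k$, giving $S = 2 + O(y/\sqrt k)$ and the claimed bound. I expect the only genuinely delicate point to be exactly this uniformity in $y$: the argument must simultaneously produce $O(1)$ for bounded $y$ and the sharp $O(y/\sqrt k)$ growth as $y\to\infty$, and it is precisely the rescaled integrand bound $(1+\tfrac34 v^2)^{-k/2}$ that secures both at once.
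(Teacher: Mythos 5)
Your proof is correct, but it takes a genuinely different route from the paper's. The paper splits into two ranges: for $y \ll k^{1/2}$ it simply observes that the bounds already established in Corollary \ref{coro:Hkapproximations} are $O(1)$, and for $y \gg k^{1/2}$ it passes to the Fourier expansion via \eqref{eq:Zagierformula}, localizes to a single term $n = \tfrac{k}{2\pi y}(1+\delta)$, $\delta \ll k^{-1/2}$, using Lemma \ref{lemma:EkApproxFourier} (hence Temme's incomplete-gamma estimates), and finishes with a Stirling computation showing that term is $\ll y/\sqrt{k}$. You instead give a single unified estimate valid for all $y \geq \tfrac{\sqrt3}{2}$: after reducing to $S = |z|^k\sum_d |z+d|^{-k}$ via Corollary \ref{coro:EisensteinApproxwithc1}, you exploit the symmetry of $b(t) = \bigl(\tfrac{1/4+y^2}{(t+1/2)^2+y^2}\bigr)^{k/2}$ about its peak $t=-\tfrac12$, peel off the two peak terms $d=0,-1$ (each equal to $1$), and bound the tail by an integral that starts away from the peak; your rescaled-integrand bound $(1+\tfrac34 v^2)^{-k/2}$ (which reduces, after expanding, to $\tfrac{3v^2}{16y^2} \leq \tfrac{v}{y} + \tfrac{v^2}{4}$, valid since $y^2 \geq \tfrac34$) then gives $\int_0^\infty b \ll y/\sqrt{k}$ by the same beta-integral computation $I_k \ll k^{-1/2}$ as in Lemma \ref{lemma:tailboundCatleast2}. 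Your diagnosis of the pitfall in applying \eqref{eq:elementaryBound} directly is also accurate: the full-line integral picks up the factor $(1+\tfrac{1}{4y^2})^{k/2}$, which is exponentially large for bounded $y$. What each approach buys: the paper's proof is very short given the machinery it has already built, and the Fourier route identifies the genuine main term for large $y$ (consistent with the sharpness of $y/\sqrt{k}$); yours is elementary and self-contained, needs no case analysis or appeal to the theta-function or incomplete-gamma results, and proves the slightly stronger statement that the sum with absolute values inside is already $2 + O(y/\sqrt{k})$, so no cancellation is needed anywhere.
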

\begin{proof}
The bounds in Corollary \ref{coro:Hkapproximations} are easily seen to be $O(1)$ for $y \ll k^{1/2}$ (with any fixed implied constant), so now assume $y \gg k^{1/2}$.

It suffices to show the desired bound for $G_k(z)$, since $|G_k(z)| = |H_k(z)|$.  The error term in Corollary \ref{coro:EisensteinApproxwithc1} is satisfactory, so we need to prove the bound for $z^k \sum_{d \in \mathbb{Z}} (z+d)^{-k}$.  Next we combine \eqref{eq:Zagierformula} with \eqref{eq:EkApproxFourier} with $C > 0$ large (but fixed), so that the right hand side of \eqref{eq:EkApproxFourier} has a single summand, with $n = \frac{k}{2\pi y}(1 +\delta)$, and $\delta \ll k^{-1/2}$.  We have $|z/y|^k \ll 1$, by a Taylor approximation, and similarly to the proof of \eqref{eq:EisensteinSideApproxlargesty}, we have
\begin{equation}
\frac{(2\pi y)^k}{\Gamma(k)} n^{k-1} \exp(-2 \pi ny) = \frac{2 \pi y}{k} \frac{(k/e)^k}{\Gamma(k)} (1+\delta)^{k-1} \exp(-k\delta) \ll \frac{y}{\sqrt{k}}.
\qedhere
\end{equation}
\end{proof}

We shall occasionally find it useful to study derivatives, especially at the corner point $e^{\pi i/3}$.  Our first claim is that \eqref{eq:EisensteinApproxTheoremSmally} holds uniformly in a sufficiently small neighborhood of $z = e^{\pi i/3}$.  Since this point lies on the boundary of $\mathcal{F}$, the proofs need to be adjusted slightly.  Since our previous analysis was designed to hold for large values of $y$, we found it easiest to revisit the previous proofs.  We have
\begin{mylemma}
 Suppose $2/5 \leq x \leq 3/5$ and $2^{-1/2} \leq y \leq 1$.  Then
 \begin{equation}
  G_k(z) = 1 + \frac{z^k}{(z-1)^k} + \frac{z^k}{(z+1)^k} + O((\tfrac{37}{17})^{-k/2}).
 \end{equation}
\end{mylemma}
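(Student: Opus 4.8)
The plan is to follow the same route that combines Corollaries \ref{coro:EisensteinApproxwithc1} and \ref{coro:EisensteinApproxwithc2} to produce \eqref{eq:EisensteinApproxTheoremSmally}, but to redo all the explicit constants, since the box $2/5 \le x \le 3/5$, $2^{-1/2} \le y \le 1$ around $e^{\pi i/3}$ pokes outside $\mathcal{F}$ (one may now have $|z| < 1$ and $|x| > 1/2$). As before I would split the coset sum in \eqref{eq:EkczdDefinition} according to $|c|$: the term $c=0$ contributes exactly $1$ to $E_k$, the rows $c = \pm 1$ together contribute $\sum_{d \in \mz}(z+d)^{-k}$, and everything else has $|c| \ge 2$. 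Rescaling by $z^k$ gives
\[
G_k(z) = z^k \sum_{d \in \mz}(z+d)^{-k} + \tfrac{z^k}{2}\sum_{|c| \ge 2}\sum_{\gcd(c,d)=1}(cz+d)^{-k},
\]
so the goal becomes to show that both the $|c| \ge 2$ tail and the $|d| \ge 2$ portion of the $d$-sum are $O\big((\tfrac{37}{17})^{-k/2}\big)$, while $d = 0, \pm 1$ supply the three displayed main terms $1 + z^k/(z-1)^k + z^k/(z+1)^k$.

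For the $|c| \ge 2$ contribution I would rerun the argument of Lemma \ref{lemma:tailboundCatleast2} essentially verbatim; the elementary bound \eqref{eq:elementaryBound} and the integral $I_k$ do not see the location of $z$, so one still obtains $\sum_{|c|\ge2}\sum_{|d|\ge1}|cz+d|^{-k} \ll (2y)^{-k}(1 + y/\sqrt k)$. The only step that used $z \in \mathcal{F}$ was \eqref{eq:zover2ybound}, which I would replace by the bound valid on the box: $(|z|/2y)^2 = (x^2+y^2)/(4y^2)$ is increasing in $x$ and decreasing in $y$, hence maximized at $z = \tfrac35 + \tfrac{i}{\sqrt2}$, giving $(|z|/2y)^2 \le \tfrac{43}{100}$. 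Since $y \le 1$ the factor $1 + y/\sqrt k$ is harmless, so this entire contribution is $O\big((\tfrac{43}{100})^{k/2}\big)$.

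For the $d$-sum I would peel off $d = 0, \pm 1$ and estimate the tail $\sum_{|d|\ge2}(|z|^2/|z+d|^2)^{k/2}$ by the method of Lemma \ref{lemma:dsumtailestimategeneralD} with $D=2$, now maximizing over $x \in [2/5,3/5]$ rather than $|x| \le 1/2$. A short calculation shows $|z|^2/|z+d|^2 = (x^2+y^2)/((x+d)^2+y^2)$ is, over the box and over $|d| \ge 2$, largest for $d=-2$ at the corner $z = \tfrac35 + i$, where it equals exactly $\tfrac{34}{74} = \tfrac{17}{37}$; the remaining terms decay geometrically and contribute less, so the tail is $O\big((\tfrac{37}{17})^{-k/2}\big)$. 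Combining the two estimates and noting $\tfrac{43}{100} < \tfrac{17}{37}$ (equivalently $43\cdot 37 < 100\cdot 17$) shows the $|c|\ge2$ piece is subsumed, yielding the claimed formula. I expect the main obstacle to be exactly this constant-chasing: because the region is asymmetric and leaves $\mathcal{F}$, one must track the sign of $d$ carefully and confirm that the genuine worst case is the single term $d=-2$ at $z = \tfrac35 + i$, so that the error is governed by $\tfrac{17}{37}$ and not by the slightly different quantities arising from $d=2$, the $|c|\ge2$ sum, or the interior of the box.
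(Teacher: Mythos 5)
Your proposal is correct and follows essentially the same route as the paper: the paper likewise reuses the bound from Lemma \ref{lemma:tailboundCatleast2} (valid for all $z \in \mathbb{H}$) to control the $|c| \geq 2$ terms by $\big(\tfrac{9/25 + y^2}{4y^2}\big)^{k/2} \leq (\tfrac{100}{43})^{k/2}$, and the method of Lemma \ref{lemma:dsumtailestimategeneralD} to bound the $|c|=1$, $|d| \geq 2$ tail by $\big(\tfrac{(3/5)^2+y^2}{(7/5)^2+y^2}\big)^{k/2} \leq (\tfrac{37}{17})^{-k/2}$. Your worst-case constants (the corner $z = \tfrac35 + \tfrac{i}{\sqrt 2}$ giving $\tfrac{43}{100}$, the term $d=-2$ at $z = \tfrac35 + i$ giving $\tfrac{17}{37}$, and the observation $\tfrac{43}{100} < \tfrac{17}{37}$) match the paper's exactly.
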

\begin{proof}
 The bound \eqref{eq:cbiggerthan2rawbound} is valid for $z \in \mathbb{H}$, and shows
 \begin{equation}
  |z|^k \sum_{|c| \geq 2} \sum_{|d| \geq 1} |cz+d|^{-k} \ll \Big(\frac{|z|^2}{4y^2}\Big)^{k/2} \leq \Big(\frac{\frac{9}{25} + y^2}{4y^2}\Big)^{k/2} = (\tfrac{100}{43})^{-k/2},
 \end{equation}
under the additional restrictions in place here.  Note $\frac{100}{43} > \frac{37}{17}$.
For the terms with $|c| =1$, we use a method
similar to the proof of Lemma \ref{lemma:dsumtailestimategeneralD}, giving
\begin{equation}
|z|^k \sum_{|d| \geq 2} ((x+d)^2 + y^2)^{-k/2} \ll \Big(\frac{|z|^2}{(7/5)^2 + y^2}\Big)^{k/2} \leq \Big(\frac{(3/5)^2 + y^2}{(7/5)^2 + y^2}\Big)^{k/2} \leq (\tfrac{37}{17})^{-k/2}. \qedhere
\end{equation} 
%This completes the proof.
\end{proof}

Using Cauchy's integral formula, we derive
\begin{mycoro}
Suppose $2/5 \leq x \leq 3/5$ and $2^{-1/2} \leq y \leq 1$.  Then for $j=1,2, \dots$, we have
 \begin{equation}
  \frac{d^j}{dz^j} G_k(z) = \frac{d^j}{dz^j} \Big(1 + \frac{z^k}{(z-1)^k} + \frac{z^k}{(z+1)^k} \Big) + O_j((\tfrac{37}{17})^{-k/2}).
 \end{equation}
\end{mycoro}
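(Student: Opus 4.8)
The plan is to apply Cauchy's integral formula for derivatives to the holomorphic error term isolated by the preceding lemma. Write
\[
\mathcal{E}_k(z) = G_k(z) - \Big(1 + \frac{z^k}{(z-1)^k} + \frac{z^k}{(z+1)^k}\Big),
\]
so that the lemma asserts $|\mathcal{E}_k(z)| \ll (\tfrac{37}{17})^{-k/2}$ on the rectangle $R = \{ 2/5 \leq x \leq 3/5,\ 2^{-1/2} \leq y \leq 1 \}$. First I would record that $\mathcal{E}_k$ is holomorphic on an open neighborhood of $R$: the function $G_k(z) = z^k(E_k(z)-1)$ is holomorphic on $\mh$ because $E_k$ is, and (since $k$ is an even integer) the rational functions $z^k/(z\mp 1)^k$ are meromorphic with poles only at $z = \pm 1$ on the real axis. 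Hence $\mathcal{E}_k$ is holomorphic on a neighborhood of $R$, and no branch issues arise.

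Second, I would fix a radius $\rho > 0$ and apply Cauchy's formula. For a point $z_0$ with a closed disc $\{|w-z_0| \leq \rho\}$ contained in the region of validity, we have
\[
\frac{d^j}{dz^j}\mathcal{E}_k(z_0) = \frac{j!}{2\pi i}\oint_{|w-z_0|=\rho}\frac{\mathcal{E}_k(w)}{(w-z_0)^{j+1}}\,dw,
\]
and bounding the integrand using the lemma gives
\[
\Big|\frac{d^j}{dz^j}\mathcal{E}_k(z_0)\Big| \leq \frac{j!}{\rho^{j}}\max_{|w-z_0|=\rho}|\mathcal{E}_k(w)| \ll_j \rho^{-j}\,(\tfrac{37}{17})^{-k/2}.
\]
Since $\rho$ is an absolute constant, the factor $j!\,\rho^{-j}$ depends only on $j$, so the error is $O_j((\tfrac{37}{17})^{-k/2})$, which is precisely the claimed bound after adding back the derivatives of the explicit main term.

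The only real obstacle is that Cauchy's formula needs the bound on $\mathcal{E}_k$ to hold on a full disc about each point, whereas the lemma is stated on the closed rectangle $R$, leaving no room for a contour at points of $\partial R$. The cleanest resolution is to note that the lemma's proof in fact yields exponential decay on a slightly enlarged rectangle: enlarging $R$ by a fixed margin changes the geometric ratios $\tfrac{(3/5)^2+y^2}{(7/5)^2+y^2}$ and $\tfrac{|z|^2}{4y^2}$ appearing there only slightly, keeping both bounded away from $1$, so a bound of the same exponential shape persists. A fixed-radius contour is then available at every $z_0 \in R$ and the estimate above applies. (In any case only a small neighborhood of the corner $e^{\pi i/3} = 1/2 + i\sqrt{3}/2$ is needed downstream, and that point lies strictly in the interior of $R$ since $x = 1/2 \in (2/5,3/5)$ and $y = \sqrt{3}/2 \in (2^{-1/2},1)$, so there is ample room for the disc.) With this adjustment the argument goes through verbatim.
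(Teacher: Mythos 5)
Your proposal is correct and takes essentially the same approach as the paper, which derives this corollary from the preceding lemma in a single line (``Using Cauchy's integral formula, we derive''), exactly as you do. Your explicit handling of the boundary of the rectangle is a reasonable patch for a point the paper leaves implicit; just note that enlarging the rectangle slightly weakens the base of the exponential below $\tfrac{37}{17}$, which is harmless since the downstream applications (e.g.\ Lemma \ref{lemma:HkDerivative} and Corollary \ref{coro:HderivativeEstimate}) only invoke the weaker bound $O(2^{-k/2})$.
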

In our upcoming application, we shall want to differentiate $H_k(z)$ along a curve.  Define
 \begin{equation}
  r_k(z) = G_k(z)- \Big(1 + \frac{z^k}{(z-1)^k} + \frac{z^k}{(z+1)^k} \Big).
 \end{equation}
\begin{mylemma}
\label{lemma:HkDerivative}
 Suppose that $z(t)$ is a smooth curve, defined for $t \in (-\varepsilon, \varepsilon)$ for some small $\varepsilon > 0$, and with $z(t)$ staying in the region $2/5 \leq x \leq 3/5$ and $2^{-1/2} \leq y \leq 1$.  Then for $j=1,2$, we have
 \begin{equation}
  \frac{d^j}{dt^j} H_k(z(t)) \vert_{t=0} = \frac{d^j}{dt^j} \Big[ \Big(\frac{|z(t)|}{z(t)}\Big)^k \Big(1 + \frac{z(t)^k}{(z(t)-1)^k} + \frac{z(t)^k}{(z(t)+1)^k} \Big) \Big] \vert_{t=0} + O( 2^{-k/2}),
 \end{equation}
 where the implied constant depends on the curve $z(t)$.
\end{mylemma}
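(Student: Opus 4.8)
The plan is to separate the holomorphic main term from the remainder $r_k$, and then to exploit the fact that although the non-holomorphic factor $(|z|/z)^k$ produces powers of $k$ under differentiation, these are crushed by the exponential saving $(37/17)^{-k/2}$ carried by $r_k$ and its derivatives. First I would write, using $H_k(z) = (|z|/z)^k G_k(z)$ and the decomposition $G_k(z) = M_k(z) + r_k(z)$, where $M_k(z) = 1 + \frac{z^k}{(z-1)^k} + \frac{z^k}{(z+1)^k}$,
\begin{equation}
H_k(z(t)) = \Big(\frac{|z(t)|}{z(t)}\Big)^k M_k(z(t)) + \Big(\frac{|z(t)|}{z(t)}\Big)^k r_k(z(t)).
\end{equation}
The first term is exactly the quantity whose $t$-derivatives appear on the right-hand side of the claimed identity, so it suffices to show that $\frac{d^j}{dt^j}\big[(|z(t)|/z(t))^k r_k(z(t))\big]\big|_{t=0} = O(2^{-k/2})$ for $j=1,2$.

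Next I would write $A(t) = (|z(t)|/z(t))^k$ and $B(t) = r_k(z(t))$ and apply the Leibniz rule $\frac{d^j}{dt^j}(AB) = \sum_{i=0}^{j} \binom{j}{i} A^{(i)} B^{(j-i)}$. For the factor $A$, set $a(t) = |z(t)|/z(t)$, so that $|a(t)| = 1$ throughout. Since $z(t)$ is smooth and stays in a compact region bounded away from $0$ (indeed $y \geq 2^{-1/2}$), the derivatives $a'(t)$ and $a''(t)$ are bounded by a constant depending only on the curve. Differentiating $A = a^k$ gives $A'(t) = k a^{k-1} a'$ and $A''(t) = k(k-1) a^{k-2}(a')^2 + k a^{k-1} a''$, so from $|a| = 1$ we obtain $A^{(i)}(t) = O(k^i)$ for $i = 0,1,2$. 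For the factor $B$, the function $r_k$ is holomorphic, and by the corollary obtained from Cauchy's integral formula its $z$-derivatives satisfy $r_k^{(i)}(z) = O_i((\tfrac{37}{17})^{-k/2})$ on the region in question. The chain rule $B'(t) = r_k'(z(t)) z'(t)$ and $B''(t) = r_k''(z(t))(z'(t))^2 + r_k'(z(t)) z''(t)$, together with the boundedness of $z'(t)$ and $z''(t)$, then yields $B^{(i)}(t) = O((\tfrac{37}{17})^{-k/2})$ for $i = 0,1,2$.

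Combining, each product $A^{(i)}(0) B^{(j-i)}(0)$ with $i + (j-i) = j \leq 2$ is $O(k^i (\tfrac{37}{17})^{-k/2}) = O(k^2 (\tfrac{37}{17})^{-k/2})$, and as there are only finitely many such terms the full $j$-th derivative is $O(k^2 (\tfrac{37}{17})^{-k/2})$. Since $\tfrac{37}{17} > 2$, we have $k^2 (\tfrac{37}{17})^{-k/2} = k^2 (\tfrac{34}{37})^{k/2} \, 2^{-k/2} = O(2^{-k/2})$, which completes the argument. The one point that requires care, and really the only obstacle, is that $(|z|/z)^k$ is \emph{not} holomorphic, so its $t$-derivatives genuinely produce factors of $k$; the argument works precisely because we only need $j \leq 2$, so at most $k^2$ appears, and the base $\tfrac{37}{17}$ exceeds $2$ by enough margin to absorb any fixed polynomial power of $k$.
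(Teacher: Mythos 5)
Your proposal is correct and follows essentially the same route as the paper's own proof: the same decomposition $G_k = M_k + r_k$, the same use of the Cauchy-integral corollary to bound the $z$-derivatives of $r_k$ by $O((\tfrac{37}{17})^{-k/2})$, and the same observation that differentiating the unimodular factor $(|z|/z)^k$ costs only powers of $k$, which are absorbed since $\tfrac{37}{17} > 2$. The only difference is cosmetic: you carry out the $j=2$ case explicitly via the Leibniz rule, where the paper simply states it is similar to $j=1$.
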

Remark.  In practice, this lemma means that we may safely approximate $H_k$ by its main term, and differentiate the main term.

\begin{proof}
Since $H_k(z) = (|z|/z)^k G_k(z)$, we have
\begin{equation}
\frac{d}{dt} H_k(z(t)) - \frac{d}{dt} \Big[ \Big(\frac{|z(t)|}{z(t)}\Big)^k \Big(1 + \frac{z(t)^k}{(z(t)-1)^k} + \frac{z(t)^k}{(z(t)+1)^k} \Big) \Big]
=
 \frac{d}{dt}  \Big(\frac{|z(t)|}{z(t)}\Big)^k r_k(z(t)),
\end{equation}
and so it suffices to bound the right hand side.  We have
\begin{equation}
 \frac{d}{dt}  \Big(\frac{|z(t)|}{z(t)}\Big)^k r_k(z(t)) = r_k(z(t)) \frac{d}{dt}  \Big(\frac{|z(t)|}{z(t)}\Big)^k + \Big(\frac{|z(t)|}{z(t)}\Big)^k r_k'(z(t)) z'(t).
\end{equation}
It is easy to see that
\begin{equation}
 \frac{d}{dt}  \Big(\frac{|z(t)|}{z(t)}\Big)^k = k \Big(\frac{|z(t)|}{z(t)}\Big)^{k-1} \frac{d}{dt} \frac{|z(t)|}{z(t)} \ll k.
\end{equation}
This proves the desired result for $j=1$.  The case $j=2$ is similar.
\end{proof}

\begin{mycoro}
\label{coro:HderivativeEstimate}
Let $1/2 + iy = R e^{i \theta}$.  Then
\begin{equation}
\frac{d}{d\theta} H_k(1/2 + iy)\vert_{\theta = \pi/3} = \frac{d}{d \theta} 2 \cos(k \theta) \vert_{\theta = \pi/3} + O( 2^{-k/2}).
\end{equation}
\end{mycoro}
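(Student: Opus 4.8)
The plan is to parametrize the side $x=1/2$ by its polar angle and reduce the computation to Lemma \ref{lemma:HkDerivative}. Write $z(\theta) = 1/2 + i y(\theta)$ with $y(\theta) = \tfrac12 \tan\theta$, so that $1/2 + i y = R e^{i\theta}$ as in the setup preceding Corollary \ref{coro:Hkapproximations}; at $\theta = \pi/3$ this is the corner $z = e^{\pi i/3}$. For $\theta$ in a small neighborhood of $\pi/3$ the curve stays inside the box $2/5 \leq x \leq 3/5$, $2^{-1/2} \leq y \leq 1$ (indeed $x \equiv 1/2$ and $y(\pi/3) = \tfrac{\sqrt{3}}{2} \in (2^{-1/2}, 1)$), so Lemma \ref{lemma:HkDerivative} applies with $t = \theta - \pi/3$ and $j=1$. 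Distributing the prefactor $(|z|/z)^k$ across the bracket in that lemma, this reduces the problem to differentiating the main term
\[
 M(\theta) := \Big(\frac{|z|}{z}\Big)^k + \Big(\frac{|z|}{z-1}\Big)^k + \Big(\frac{|z|}{z+1}\Big)^k,
\]
at the cost of an error $O(2^{-k/2})$.

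The crux is an exact algebraic simplification valid on the line $x=1/2$: there one has $z - 1 = -\overline{z}$, hence $|z-1| = |z|$ and $|z|/(z-1) = -|z|/\overline{z} = -e^{i\theta}$. Since $k$ is even, the second summand of $M(\theta)$ equals $e^{ik\theta}$ exactly, while the first is trivially $(|z|/z)^k = e^{-ik\theta}$. Thus the first two summands combine to $2\cos(k\theta)$ with no error whatsoever. For the third summand, $|z|/|z+1| = \big((\tfrac14 + y^2)/(\tfrac94 + y^2)\big)^{1/2}$ equals $3^{-1/2}$ at $\theta = \pi/3$ and therefore stays below some $\rho_0 < 2^{-1/2}$ on a small neighborhood; hence $(|z|/(z+1))^k = O(\rho_0^k)$, which is absorbed into $O(2^{-k/2})$.

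It then remains to differentiate. The first two summands are honest functions of $\theta$, so they contribute exactly $\tfrac{d}{d\theta} 2\cos(k\theta)$. Differentiating the third summand pulls down one factor of $k$ together with the bounded logarithmic derivative $\tfrac{d}{d\theta}\log(|z|/(z+1))$, giving a contribution of size $O(k\rho_0^k)$. Since $\rho_0 < 2^{-1/2}$, one has $k\rho_0^k = O(2^{-k/2})$, so this is negligible. Combining the derivative of $M$ with the $O(2^{-k/2})$ error furnished by Lemma \ref{lemma:HkDerivative} yields the stated formula. There is no deep obstacle here; the one point demanding a little care is precisely this factor of $k$ produced by differentiating the exponentially small third term, which is harmless only because the ratio $|z|/|z+1|$ is bounded below $2^{-1/2}$ at the corner. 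One should also note that the pointwise estimate \eqref{eq:EisensteinSideApproxsmally} alone is \emph{not} enough, since an $O(\exp(-k^{1/6}))$ error term carries no information about its derivative; Lemma \ref{lemma:HkDerivative} is exactly what controls the derivative of the remainder.
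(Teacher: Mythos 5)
Your proof is correct and is essentially the paper's own argument: the paper states this corollary as an immediate consequence of Lemma \ref{lemma:HkDerivative}, and your write-up supplies exactly the intended details — parametrizing the side $x=1/2$ by $\theta$, using $z-1=-\overline{z}$ (and $k$ even) to collapse the first two summands to $2\cos(k\theta)$, and noting that the $(z+1)$-term, even after differentiation pulls down a factor of $k$, is $O(k\,3^{-k/2})=O(2^{-k/2})$ near the corner. Your closing remark that the pointwise bound \eqref{eq:EisensteinSideApproxsmally} cannot control the derivative, which is precisely why Lemma \ref{lemma:HkDerivative} is needed, correctly identifies the role of that lemma.
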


\section{Zeros on the side}
\label{section:Bkl}
To understand the zeros on the line $x=1/2$, we will generally evaluate $H_k$ at specific sample points.  
\subsection{Preliminary results}
From Corollary \ref{coro:Hkapproximations}, we shall deduce
\begin{myprop}
\label{prop:Hksamplepoints} 
 Let $1/2 + i y_m = R e^{i \theta_m}$ where
 %\begin{equation}
  $\theta_m = \frac{\pi m}{k} \in [\frac{\pi}{3}, \frac{\pi}{2})$.  Then $\cos(k \theta_m) = (-1)^m$, and
 %\end{equation}
 \begin{equation}
 (-1)^m H_k(1/2+iy_m) = O(k^{-1/15}) + 
 \left\{
 \begin{alignedat}{2}
   &2, \qquad &&y_m  \leq k^{2/5}, \\
   &2 + \gamma \Phi_0(r), \qquad &k^{2/5} < &y_m \leq 100 k^{1/2}, \\
    &r^{1/2} \exp(\frac{\pi}{4r})(1 + \gamma' \Phi_1(r)), \qquad &\frac{1}{100} k^{1/2} \leq  &y_m \leq k^{3/5}, \\
    &r^{1/2} (1 + O(k^{-1/15})), \qquad &k^{3/5} \leq  &y_m \leq k,
    \end{alignedat}
    \right.
\end{equation}
% \begin{equation}
%  (-1)^m H_k(1/2+iy_m) = O(k^{-1/15}) + 
%  \begin{cases}
%    2, \qquad y_m  \leq k^{2/5}, \\
%    2 + \gamma \Phi_0(r), \qquad k^{2/5} < y_m \leq 100 k^{1/2}, \\
%     r^{1/2} \exp(\frac{\pi}{4r})(1 + \gamma' \Phi_1(r)), \qquad \frac{1}{100} k^{1/2} \leq  y_m \leq k^{3/5}, \\
%     r^{1/2} (1 + O(k^{-1/15})), \qquad k^{3/5} \leq  y_m \leq k,
%  \end{cases}
% \end{equation}
where $\gamma, \gamma'$ are real numbers of absolute value at most $1$.
\end{myprop}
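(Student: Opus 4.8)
The plan is to read off each of the four cases directly from Corollary \ref{coro:Hkapproximations}, after recording two elementary facts about the sample points. First, since $k\theta_m = \pi m$, we have $\cos(k\theta_m) = (-1)^m$; writing $\phi_m = \frac{\pi}{2} - \theta_m$ so that $k\phi_m = \frac{k\pi}{2} - \pi m$, and using that $k$ is even (so $\cos(\frac{k\pi}{2}) = (-1)^{k/2}$), one checks that $(-1)^{k/2}\cos(k\phi_m) = (-1)^m$ as well. Hence in every regime the leading term, whether written as $2\cos(k\theta_m)$ or as $2(-1)^{k/2}\cos(k\phi_m)$, equals $2(-1)^m$, and multiplying through by $(-1)^m$ produces the constant $2$ appearing in the first two lines.

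Second, I would verify that the sample points meet the hypothesis \eqref{eq:yNapproxformula} required by the two large-$y$ cases of the corollary. From $1/2 + iy_m = Re^{i\theta_m}$ we get $y_m = \frac12\tan\theta_m = \frac12\cot\phi_m$, while $\phi_m = \frac{\pi}{k}(\frac{k}{2} - m)$. Setting $N = \frac{k}{2} - m$, which is a positive integer because $k$ is even and $\theta_m \in [\frac{\pi}{3},\frac{\pi}{2})$, we have $\phi_m = \frac{\pi N}{k}$ and $y_m = \frac12\cot(\frac{\pi N}{k})$. In the ranges $y_m \gg k^{1/2}$ (cases 3 and 4) this forces $N \ll k^{1/2}$, so a Taylor expansion of cotangent gives $y_m = \frac{k}{2\pi N}\bigl(1 + O((N/k)^2)\bigr) = \frac{k}{2\pi N}(1 + O(k^{-1}))$, which is exactly \eqref{eq:yNapproxformula}. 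Moreover $N + \frac{k}{2} = k - m$, so $(-1)^{N + k/2} = (-1)^m$, matching the sign on the left of the claimed identity; this lets me turn \eqref{eq:EisensteinSideApproxlargey} and \eqref{eq:EisensteinSideApproxlargesty} into the stated forms after multiplying by $(-1)^m$.

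With these in hand, the four cases follow by substituting into \eqref{eq:EisensteinSideApproxsmally}, \eqref{eq:EisensteinSideApproxsmallishy}, \eqref{eq:EisensteinSideApproxlargey}, and \eqref{eq:EisensteinSideApproxlargesty} respectively. The overlap built into the proposition's ranges, in particular the band $\frac{1}{100}k^{1/2} \le y_m \le 100\,k^{1/2}$ shared by cases 2 and 3, guarantees that every admissible $y_m$ lies in the interior of a range where the cited estimate is valid, so the implied constants in the corollary's ``$\ll$'' thresholds cause no gaps. All the stray error terms produced along the way, namely $\exp(-k^{1/6})$ and $O(k^{-1/6})$, are absorbed into the single $O(k^{-1/15})$ out front, since $k^{-1/6} \le k^{-1/15}$.

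The one point requiring genuine care, and the main (if modest) obstacle, is producing the factors $\gamma,\gamma'$ of modulus at most $1$. In case 2 the corollary gives $(-1)^m H_k(1/2+iy_m) = 2 + \tilde E$ with $|\tilde E| \le \Phi_0(r) + O(k^{-1/6})$, and I cannot simply set $\gamma = \tilde E/\Phi_0(r)$ because $\Phi_0(r)$ can be smaller than the remaining error. Instead I would split into cases: if $|\tilde E| \le \Phi_0(r)$ set $\gamma = \tilde E/\Phi_0(r)$, which has $|\gamma|\le 1$ and leaves no remainder; otherwise set $\gamma = \operatorname{sign}(\tilde E)$, so that $|\gamma| = 1$ and $|\tilde E - \gamma\Phi_0(r)| = |\tilde E| - \Phi_0(r) = O(k^{-1/6}) = O(k^{-1/15})$. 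The identical dichotomy applied to the quantity $r^{1/2}\exp(\frac{\pi}{4r})\Phi_1(r)$ produces $\gamma'$ in case 3. Beyond this bookkeeping the proposition is a direct repackaging of Corollary \ref{coro:Hkapproximations}, where the analytic substance already resides.
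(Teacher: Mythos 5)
Your proposal is correct and follows essentially the same route as the paper: reduce everything to Corollary \ref{coro:Hkapproximations}, use $\cos(k\theta_m)=(-1)^m$ for the two small-$y$ regimes, and for $y_m \gg k^{1/2}$ set $N=\frac{k}{2}-m$, Taylor-expand the cotangent to verify hypothesis \eqref{eq:yNapproxformula}, and check $(-1)^{N+k/2}=(-1)^m$. Your dichotomy producing $\gamma,\gamma'$ with $|\gamma|,|\gamma'|\leq 1$ is a bookkeeping step the paper's proof leaves implicit (it simply says the cases $y_m \leq 100k^{1/2}$ ``immediately follow''), but it is the same argument in substance.
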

From this one may conclude, in a quantitative way, that the sign of $H_k(1/2+iy_m)$ is the same as $(-1)^m$, for all $m$, provided $k$ is large enough for the error term to be negligible.  A key point is that the set of $r$'s for which $\Phi_0(r) < 2$ and $\Phi_1(r) < 1$ overlap, which is apparent from Figure \ref{fig:Phi0}.

\begin{proof}
For the values of $y_m$ with $y_m \leq 100 k^{1/2}$, this immediately follows from Corollary \ref{coro:Hkapproximations}, since $\cos(k \theta_m) = (-1)^m$,
so assume $y_m \gg k^{1/2}$.  Let $\phi_m = \frac{\pi}{2} - \theta_m = \frac{\pi}{2} - \frac{\pi m}{k}$, so that $\tan(\phi_m) = \frac{1}{2 y_m}$.  Since $y_m \gg k^{1/2}$, this means $\phi_m \ll k^{-1/2}$, and so $m = \frac{k}{2} - O(k^{1/2})$ (recall that $m$ runs over the integers in the interval $[k/3, k/2)$).  Say $m= \frac{k}{2}-p$, with $p=1,2, \dots, P$ with $P \ll \sqrt{k}$.  Then by standard Taylor approximations, we have
\begin{equation}
y_m = y_{\frac{k}{2}-p} = \frac{1}{2 \tan(\frac{\pi p}{k})} = \frac{1}{2 \frac{\pi p}{k} (1 + O(\frac{p^2}{k^2}))} = \frac{k}{2 \pi p} \Big(1 + O(k^{-2} p^2)\Big). 
\end{equation}
Since $k^{-2} p^2 \ll k^{-1}$, this value of $y$ satisfies \eqref{eq:yNapproxformula} with $N=p$, and so 
the conditions required to apply Corollary \ref{coro:Hkapproximations} with $y \gg k^{1/2}$ are met.  Thus, for $y_m \gg k^{1/2}$, 
$H_k(1/2 + i y_m)$ has the same sign as $(-1)^{p + \frac{k}{2}} = (-1)^m$, as desired.
\end{proof}

We shall need a result analogous to Lemma \ref{lemma:sinMonotonic}.
\begin{mylemma}
\label{lemma:cosMonotonic}
 The function $(2 \cos(\frac{\pi}{3} + \frac{\pi}{2X}))^X$ is monotonically increasing for $X \geq 3$.
\end{mylemma}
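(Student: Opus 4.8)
The plan is to follow the template of Lemma~\ref{lemma:sinMonotonic} almost verbatim. First I would record the basic geometry: for $X \geq 3$ we have $0 < \tfrac{\pi}{2X} \leq \tfrac{\pi}{6}$, so $\tfrac{\pi}{3} + \tfrac{\pi}{2X} \in (\tfrac{\pi}{3}, \tfrac{\pi}{2}]$, and hence the base $2\cos(\tfrac{\pi}{3} + \tfrac{\pi}{2X})$ lies in $[0,1)$, equalling $0$ exactly at $X=3$ and tending to $1$ as $X \to \infty$. Because the base is in $(0,1)$ while both the base and the exponent $X$ are increasing, monotonicity is not obvious and must be extracted from a derivative computation. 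Since the function is continuous and nonnegative, vanishing only at $X=3$, it suffices to show it is strictly increasing on $(3,\infty)$, which I would establish by taking logarithms: setting $f(X) = X \log(2 \cos(\tfrac{\pi}{3} + \tfrac{\pi}{2X}))$, the goal becomes $f'(X) \geq 0$.

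Differentiating directly gives
\begin{equation}
 f'(X) = \log(2 \cos(\tfrac{\pi}{3} + \tfrac{\pi}{2X})) + \tfrac{\pi}{2X} \tan(\tfrac{\pi}{3} + \tfrac{\pi}{2X}).
\end{equation}
As in Lemma~\ref{lemma:sinMonotonic}, I would then substitute $Y = \tfrac{\pi}{2X}$, which runs over $(0, \tfrac{\pi}{6})$ as $X$ runs over $(3,\infty)$, so that $f'(X) = g(Y)$ where
\begin{equation}
 g(Y) := \log(2 \cos(\tfrac{\pi}{3} + Y)) + Y \tan(\tfrac{\pi}{3} + Y).
\end{equation}
Thus the claim $f'(X) \geq 0$ reduces to showing $g(Y) \geq 0$ for $0 < Y < \tfrac{\pi}{6}$.

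To finish, I would note the boundary value $g(0) = \log(2\cos\tfrac{\pi}{3}) = \log 1 = 0$ and compute $g'$. The logarithmic term contributes $-\tan(\tfrac{\pi}{3}+Y)$, while the product term contributes $\tan(\tfrac{\pi}{3}+Y) + Y\sec^2(\tfrac{\pi}{3}+Y)$; the two tangent terms cancel, leaving
\begin{equation}
 g'(Y) = Y \sec^2(\tfrac{\pi}{3} + Y),
\end{equation}
which is manifestly $\geq 0$ on $(0,\tfrac{\pi}{6})$. Hence $g$ increases from $g(0)=0$, so $g(Y) \geq 0$, giving $f'(X) \geq 0$ and monotonicity on $(3,\infty)$; combined with continuity and positivity this yields monotonicity on the full range $X \geq 3$.

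The only genuinely delicate point is the left endpoint $X=3$ (equivalently $Y \to \tfrac{\pi}{6}$), where the base vanishes and $f(X) \to -\infty$; I sidestep this by reducing to strict monotonicity on the open interval via the positivity/continuity remark rather than manipulating $\log$ at that point. Beyond this, there is no real obstacle — I expect the clean cancellation of the tangent terms in $g'$ to be the crux of the argument, exactly mirroring the appearance of $-Y\csc^2(\tfrac{\pi}{6}+Y)$ in the proof of Lemma~\ref{lemma:sinMonotonic}.
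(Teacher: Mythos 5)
Your proof is correct and takes essentially the same route as the paper: pass to $f(X) = X\log(2\cos(\tfrac{\pi}{3}+\tfrac{\pi}{2X}))$, substitute $Y = \tfrac{\pi}{2X}$, and deduce $g \geq 0$ from $g(0)=0$ and $g'(Y) = Y\sec^2(\tfrac{\pi}{3}+Y) \geq 0$, which is the same quantity the paper writes as $Y\csc(\tfrac{\pi}{6}-Y)^2$. Incidentally, your $\tan$ in $f'$ is the correct computation (the paper's $\cot$ there is a typo carried over from Lemma \ref{lemma:sinMonotonic}), and your careful treatment of the endpoint $X=3$, where the base vanishes and the logarithm is undefined, is a minor refinement the paper glosses over.
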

\begin{proof}
It suffices to show $f(X) := X \log(2 \cos(\frac{\pi}{3} + \frac{\pi}{2X}))$ is increasing for $X \geq 3$.  We have
\begin{equation}
 f'(X) = \tfrac{\pi}{2X} \cot(\tfrac{\pi}{3} + \tfrac{\pi}{2X}) + \log(2 \cos(\tfrac{\pi}{3} + \tfrac{\pi}{2X})),
\end{equation}
and we need to show $f'(X) \geq 0$.  With $Y = \frac{\pi}{2X}$, it suffices to show that $g(Y) := Y \cot(\frac{\pi}{3} + Y) + \log(2 \cos(\frac{\pi}{3} + Y) \geq 0$ for $Y \in [0, \pi/6]$.  We have $g(0) = 0$, and 
by direct calculation, $g'(Y) = Y \csc(\frac{\pi}{6} - Y)^2 \geq 0$, so we are done.
\end{proof}

\subsection{Proof of Proposition \ref{prop:BklLowerBoundAsymptotic}}
\label{section:samplepointsBkl}
We restate Proposition \ref{prop:BklLowerBoundAsymptotic} for convenience:
\begin{myprop}
\label{prop:BklLowerBoundAsymptoticSection5}
 Suppose $\ell$ is large.  Then $\Delta_{k, \ell}$ has at least $\lfloor \ell/6 \rfloor -1$ zeros on $x= 1/2$ if $\ell \equiv 2, 4 \pmod{6}$, and at least $\lfloor \ell/6] - 2$ zeros on $x=1/2$ if $\ell \equiv 0 \pmod{6}$.
\end{myprop}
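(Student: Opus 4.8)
The plan is to count sign changes of $\Delta_{k,\ell}$ along the open side $x=\tfrac12$, $y>\tfrac{\sqrt3}{2}$, in direct analogy with the Rankin--Swinnerton-Dyer count of sign changes along the arc. The first observation is that $\Delta_{k,\ell}(\tfrac12+iy)$ is real: from the Fourier expansion \eqref{eq:EkFourierExpansion}, $e(n(\tfrac12+iy)) = (-1)^n e^{-2\pi n y}$ is real, so each of $E_k, E_\ell, E_{k+\ell}$ is real on this line. Writing $E_k = 1 + |z|^{-k}H_k$ and expanding the product, one obtains the real-valued identity
\begin{equation}
|z|^\ell \Delta_{k,\ell}(\tfrac12+iy) = H_\ell(z) + |z|^{\ell-k} H_k(z) + |z|^{-k}\big(H_k(z)H_\ell(z) - H_{k+\ell}(z)\big).
\end{equation}
The positive factor $|z|^\ell$ does not affect signs, and the leading term is $H_\ell$; the remaining two terms I will treat as corrections. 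I would evaluate at the sample points $\tfrac12 + iy_m = Re^{i\theta_m}$ with $\theta_m = \tfrac{\pi m}{\ell}$ and $m$ ranging over the integers in the open interval $(\tfrac{\ell}{3}, \tfrac{\ell}{2})$. An elementary count gives exactly $\lfloor \ell/6\rfloor - 1$ such integers when $\ell \equiv 0 \pmod 6$ (the endpoint $\ell/3$ being the corner, correctly excluded), and exactly $\lfloor\ell/6\rfloor$ when $\ell\equiv 2,4\pmod 6$; thus $M$ alternating sample values will yield $M-1$ sign changes, producing precisely the claimed lower bounds.

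The core estimate is that $(-1)^m |z|^\ell \Delta_{k,\ell}(\tfrac12+iy_m) > 0$ at every sample point, which I would establish by showing $H_\ell$ dominates both corrections. By Proposition \ref{prop:Hksamplepoints} applied with weight $\ell$ (whose sample points are exactly these $\theta_m$, where $\cos(\ell\theta_m) = (-1)^m$), the quantity $(-1)^m H_\ell(\tfrac12+iy_m)$ is bounded below by an absolute positive constant $c_0$; here one uses that the ranges of $r$ on which $\Phi_0(r) < 2$ and $\Phi_1(r) < 1$ overlap, together with $r^{1/2}\to\infty$ for the largest sample points, to secure a uniform positive lower bound across all regimes. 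The last correction carries the factor $|z|^{-k}=R^{-k}$, and by the trivial bound of Corollary \ref{coro:Hktrivialbound} it is at most $R^{-k}(1+y/\sqrt k)^2$, which is negligible at every sample point. The whole argument therefore reduces to controlling $|z|^{\ell-k}H_k = R^{\ell-k}H_k$.

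When $k=\ell$ this correction is exactly $H_\ell$ and reinforces the leading term, so domination is immediate. When $k>\ell$ I would split on the size of $y$. For $y \gg \ell^{2/5}$ one has $R \asymp y$ large, so $R^{\ell-k}\le R^{-2}$ is tiny while $|H_k| \ll 1 + y/\sqrt\ell$ grows only polynomially, making the correction $o(1)$ and smaller than $c_0$. For $y \le \ell^{2/5}$, which covers every corner-adjacent point, I would use the clean approximations \eqref{eq:EisensteinSideApproxsmallySecondForm} of Corollary \ref{coro:Hkapproximations}, giving $H_\ell(\tfrac12+iy_m) = 2(-1)^m + O(e^{-\ell^{1/6}})$ and $|H_k| \le 2 + O(e^{-k^{1/6}})$, so that
\begin{equation}
(-1)^m |z|^\ell \Delta_{k,\ell}(\tfrac12+iy_m) \ge 2\big(1 - R^{\ell-k}\big) - O(e^{-\ell^{1/6}}).
\end{equation}
Since $y_m > \tfrac{\sqrt3}{2}$ gives $R>1$ and $k-\ell\ge 2$, the quantity $2(1-R^{\ell-k})$ is strictly positive. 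The main obstacle is the corner-adjacent sample point: there $\theta_m - \tfrac{\pi}{3} \asymp \ell^{-1}$ forces $\log R \asymp \ell^{-1}$ and hence $2(1-R^{\ell-k}) \asymp \ell^{-1}$, a gap that is small but still dwarfs the exponentially small error, so positivity persists for $\ell$ large. With $(-1)^m |z|^\ell \Delta_{k,\ell}(\tfrac12+iy_m) > 0$ verified at all $M$ sample points, consecutive values alternate in sign, yielding at least $M-1$ genuine zeros of $\Delta_{k,\ell}$ strictly between them on the side, which is exactly the count asserted.
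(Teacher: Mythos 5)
Your framework coincides with the paper's: the same decomposition $|z|^{\ell}\Delta_{k,\ell} = H_\ell + |z|^{\ell-k}H_k + |z|^{-k}(H_kH_\ell - H_{k+\ell})$, the same sample points $\theta_m = \tfrac{\pi m}{\ell}$ with the same count, and the same treatment of the large-$y$ regime via Proposition \ref{prop:Hksamplepoints} and Corollary \ref{coro:Hktrivialbound}. The gap is at the corner-adjacent sample points, and it is genuine. Your claim that $|z|^{-k}(H_kH_\ell - H_{k+\ell})$ is ``negligible at every sample point'' is false there: on the side one has $|z|^{-1} = 2\cos\theta_m$, so at the sample point nearest the corner, $\theta_m = \tfrac{\pi}{3} + \tfrac{\pi}{\ell}(d-\tfrac{a}{3})$ with $d - \tfrac{a}{3} \leq 1$, one gets $|z|^{-k} = (2\cos\theta_m)^k \approx \exp\bigl(-\sqrt{3}\,\pi k (d-\tfrac{a}{3})/\ell\bigr)$, which is a constant bounded away from $0$ whenever $k \asymp \ell$ (for instance $\approx e^{-\pi/\sqrt{3}} \approx 0.16$ when $\ell \equiv 2 \pmod{6}$ and $k = \ell+2$). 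Since in this region $H_kH_\ell - H_{k+\ell} = 2\cos((k-\ell)\theta_m) + O(e^{-\ell^{1/6}})$, the term you discard has fixed nonzero size, so your displayed inequality omits a contribution that cannot be absorbed into $O(e^{-\ell^{1/6}})$. (Your separate $k=\ell$ case quietly drops the same term; there it happens to be harmless only because the two main terms contribute $4$ rather than a quantity of size $\asymp \ell^{-1}$.)

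Worse, the inequality cannot be repaired while keeping the crude bound $|H_k| \leq 2$: restoring the third term turns your lower bound into $2 - 2R^{\ell-k} - 2R^{-k}|\cos((k-\ell)\theta_m)| - O(e^{-\ell^{1/6}})$, and at the corner-adjacent point with $k - \ell = 2$, $\ell \equiv 2 \pmod 6$, this is $\approx \tfrac{4\pi}{\sqrt{3}\,\ell} - 2(0.16)(\tfrac12) < 0$ for large $\ell$: your bound goes negative even though the function itself does not. The paper avoids this by never replacing $\cos(k\theta_m)$ by $\pm 1$. It uses the exact identities $\cos(k\theta_m) = (-1)^m\cos(D\theta_m)$ and $2\cos(k\theta)\cos(\ell\theta) - \cos((k+\ell)\theta) = \cos(D\theta)$, with $D = k-\ell$, to combine your second and third correction terms into the single expression $\cos(D\theta_m)\,(2\cos\theta_m)^{D}\bigl[1 + (-1)^m (2\cos\theta_m)^{\ell}\bigr]$ as in \eqref{eq:Pkellthetamformula}, and then proves Proposition \ref{prop:PkellLowerBound} by cases on $D \bmod 6$ and on $x = \theta_m - \tfrac{\pi}{3}$, using Lemma \ref{lemma:cosMonotonic}. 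The point is that when $D \equiv 0 \pmod 6$ and $Dx$ is small the cosine factor is $\geq 0$, so the correction helps rather than hurts; and when $D \equiv 2,4 \pmod 6$ and $x$ is small, $|\cos(\tfrac{D\pi}{3} + Dx)| \leq \tfrac{\sqrt{2}}{2}$, and it is precisely this gap below $1$, combined with $(2\cos\theta_m)^{\ell} \leq 0.17$, that yields the uniform bound $(-1)^m P_{k,\ell}(\theta_m) \geq 0.17$. Your argument needs this trigonometric ingredient at the corner; away from the corner your analysis is sound.
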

\begin{proof}
By direct calculation, we have
 \begin{equation}
  \Delta_{k, \ell} = E_{k} E_{\ell} - E_{k+\ell} = (E_k - 1) + (E_\ell-1) + (E_k-1)(E_\ell-1) - (E_{k+\ell}-1).
 \end{equation}
Recalling the definition \eqref{eq:GandHdefinitions}, we have
\begin{equation}
\label{eq:zkpluslDeltakldef}
|z|^{k+\ell} \Delta_{k, \ell} = |z|^{k} H_\ell(z) + |z|^l H_k(z) + H_{k}(z) H_\ell(z) - H_{k+\ell}(z).
\end{equation}  

% Specializing to $\ell=k$,
% \begin{equation}
% |z|^{k} \Delta_{k,k}(z) = 2 H_k(z) + |z|^{-k}(H_k(z)^2 - H_{2k}(z)).
% \end{equation}
% For $k$ sufficiently large, Proposition \ref{prop:Hksamplepoints} shows that $H_k(1/2+iy_j)$ has the same sign as $(-1)^j$.  Furthermore,
% $ |z|^{-k} (H_k(z)^2 - H_{2k}(z))$ is absorbed into the error term by the following reasoning.  If say $y_j \geq 2$, then $|z|^{-k}$ is exponentially small in $k$, while the factors $H_k^2$ and $H_{2k}$ are at most polynomial in $k$, so we may focus on $y_j < 2$, in which case
% \begin{equation}
%  H_k(1/2+iy_j) = 2 (-1)^j + O(k^{-1/15}), \qquad H_{2k}(1/2+iy_j) = 2 (-1)^{2j} + O(k^{-1/15}),
% \end{equation}
% the latter formula holding since $1/2+ i y_j = r e^{i \frac{\pi j}{k}} = r e^{i \frac{\pi (2j)}{2k}}$ for $H_k$ is the same point as $1/2 + i y_{2j}$ for $H_{2k}$.  Thus for $y_j < 2$, we have
% \begin{equation}
%  |1/2+iy_j|^k \Delta_{k,k}(1/2+iy_j) = 4 (-1)^j + |z|^{-k} (2 + O(k^{-1/15})).
% \end{equation}
% This has the same sign as $(-1)^j$, for $k$ large enough, as desired.

Suppose that $Q_{\ell}$ is the number of integers $m$ such that $\theta_m = \frac{\pi m}{\ell} \in (\frac{\pi}{3}, \frac{\pi}{2})$.
%The calculation of the previous paragraph shows that $|z|^k \Delta_{k,k}(z)$ has at least $M_k -1$ sign changes along the line $x=1/2$.  
Let $\ell = 6q + a$, where $a \in \{0,2,4 \}$, so that $m \in (2q+\frac{a}{3}, 3q + \frac{a}{2})$.  Write $m = 2q + d$, so that $d$ ranges over the following integers:
\begin{equation}
\label{eq:rangeofrs}
\begin{cases}
1 \leq d \leq q-1, \qquad &\text{if } a=0, \\
1 \leq d \leq q, \qquad &\text{if } a =2, \\
2 \leq d \leq q+1, \qquad &\text{if } a=4.
\end{cases}
\end{equation}
Therefore,
\begin{equation}
 Q_{\ell} = \begin{cases}
 q-1, \qquad &\text{for } a=0, \\
 q, \qquad &\text{for } a =2, 4.
\end{cases}
\end{equation}
Our goal is to show that $\Delta_{k,\ell}$ exhibits sign changes along the points $1/2 + i y_m = R e^{i \theta_m}$, which then implies Proposition \ref{prop:BklLowerBoundAsymptoticSection5}
% On the other hand, the valence formula shows that the total number of zeros of a weight $2k$ cusp form, not counting $\rho, i, \infty$ is at most $\lfloor \frac{k}{6} \rfloor-1$.  Therefore, there are exactly $M_k - 1$ such zeros on the line $x=1/2$.  This proves Theorem \ref{thm:EqualkAllzeros}.
 
Suppose $k \geq \ell$.  Then we rearrange \eqref{eq:zkpluslDeltakldef} as
\begin{equation}
\label{eq:DeltaklapproximateformulaOnSide}
 |z|^{k+\ell} \Delta_{k, \ell}(z) = |z|^k \Big(H_\ell(z) + \frac{H_k(z)}{|z|^{k-\ell}}\Big) + H_k(z) H_\ell(z) - H_{k+\ell}(z).
\end{equation}
If $y \gg 1$, with a large enough implied constant (not depending on $k$ or $\ell$), then we claim that $|z|^{k+\ell} \Delta_{k, \ell}(1/2 + iy_m)$, with $1/2 + i y_m = r^{i \theta_m}$, $\theta_m = \frac{\pi m}{\ell} \in (\frac{\pi}{3}, \frac{\pi}{2})$, has the same sign as $(-1)^m$, provided that $\ell$ is large enough.  
For this, we use Corollary \ref{coro:Hktrivialbound} which then implies
\begin{equation}
 |z|^{k+\ell} \Delta_{k, \ell}(z) = |z|^k \Big(H_\ell(z) + O\Big(\frac{1 + \frac{y}{\sqrt{k}}}{y^2} \Big) \Big) + O((1 + k^{-1/2} y)(1 + \ell^{-1/2} y)),
\end{equation}
under the extra assumption $k > \ell$, while if $k=\ell$, we simply write
\begin{equation}
 |z|^{2 \ell} \Delta_{\ell,\ell}(z) = 2|z|^\ell H_\ell(z) + O((1 + \ell^{-1/2} y)^2).
\end{equation}
In either case, Proposition \ref{prop:Hksamplepoints} shows that $\Delta_{k,\ell}(1/2 + iy_m)$ has the same sign as $(-1)^m$, for $\ell$ large enough, and $y$ large enough.

Now suppose that $y \ll 1$.  Then we temporarily forget that $y= y_m$, and refer to \eqref{eq:EisensteinSideApproxsmallySecondForm}, 
%We also need to further restrict attention to $\frac{\pi m}{\ell} \in (\frac{\pi}{3}, \frac{\pi}{2})$, as there is some subtletly at $\frac{\pi}{3}$ (in the case $\ell=k$ this was not relevant because $E_k^2-E_{2k}$ has a trivial zero at $\rho$ iff $2k \not \equiv 0 \pmod{6}$, that is, $k \not \equiv 0 \pmod{6}$, but in this case $\frac{\pi j}{k} \neq \frac{\pi}{3}$).
which gives
\begin{equation}
\label{eq:DeltaklapproximateformulaOnSideWithTrigFunctions}
 |z|^{k+\ell} \Delta_{k, \ell}(z) = 2|z|^k P_{k,\ell}(\theta)
%  \Big( \cos(\ell \theta) + \frac{ \cos(k \theta)}{|z|^{k-\ell}}  + \frac{(2 \cos(k \theta) \cos(\ell \theta) -  \cos((k+\ell)\theta))}{|z|^k} \Big) 
+ O(\exp(-\ell^{1/6})),
\end{equation}
where $z = 1/2 + iy = R e^{i \theta}$, and 
\begin{equation}
\label{eq:PkellDef}
P_{k,\ell}(\theta) = \cos(\ell \theta) + \frac{ \cos(k \theta)}{|z|^{k-\ell}}  + \frac{2 \cos(k \theta) \cos(\ell \theta) -  \cos((k+\ell)\theta)}{|z|^k}.
\end{equation}
Now we express $P_{k,\ell}(\theta_m)$ in simplified form by a change of variables.  
In terms of the variables $q,a,d$, we have
$\cos(\ell \theta_m) = (-1)^m = (-1)^d$,
\begin{equation}
\cos(k \theta_m) = \cos(\ell \theta_m + (k-\ell) \theta_m) = (-1)^d \cos((k-\ell)\theta_m), 
\end{equation}
and similarly $\cos((k+\ell)\theta_m) = \cos((k-\ell)\theta_m)$.  Let us also write $k-\ell = D$.
Combining these formulas, we derive
\begin{equation}
(-1)^d P_{k,\ell}(\theta_m) = 1 + \frac{\cos(D \theta_m)}{|z|^D} (1 + |z|^{-\ell} (-1)^d).
\end{equation}
We also have
\begin{equation}
\theta_m= \theta_{2q+d} = \frac{\pi (2q+d)}{6q+a}  = \frac{\pi}{3} + \frac{\pi}{\ell} (d - \frac{a}{3}),
\end{equation}
and
\begin{equation}
|z|^{-1} = 2 \cos(\theta_m).
\end{equation}
Therefore, with shorthand $x = \frac{\pi}{\ell}(d-\frac{a}{3})$, we have $\theta_m = \frac{\pi}{3} + x$, and 
\begin{equation}
\label{eq:Pkellthetamformula}
(-1)^d P_{k,\ell}(\theta_m) = 1 + (2 \cos(\tfrac{\pi}{3} + x))^D \cos(\tfrac{D \pi}{3} + Dx) [1 + (-1)^d (2 \cos(\tfrac{\pi}{3} + x))^{\ell}].
\end{equation}
We next claim the following.
\begin{myprop}
\label{prop:PkellLowerBound}
For all $m$ with $\theta_m \in (\tfrac{\pi}{3}, \tfrac{\pi}{2})$, we have
\begin{equation}
\label{eq:PkellLowerBound}
(-1)^m P_{k,\ell}(\theta_m) \geq 0.17,
\end{equation}
\end{myprop}
For $\ell$ sufficiently large, using Proposition \ref{prop:PkellLowerBound} in \eqref{eq:DeltaklapproximateformulaOnSideWithTrigFunctions} completes the proof of Proposition \ref{prop:BklLowerBoundAsymptoticSection5}.
\end{proof}

\begin{proof}[Proof of Proposition \ref{prop:PkellLowerBound}]
First suppose $D \equiv 0 \pmod{6}$.  If in addition, $x \leq \frac{\pi}{2D}$ then $\cos(\frac{\pi D}{3} + Dx) = \cos(Dx) \geq 0$, so $(-1)^m P_{k,\ell}(\theta_m) \geq 1$, which is even stronger than claimed; here we used that the expression in square brackets in \eqref{eq:Pkellthetamformula}  is $\geq 0$.  If instead $x > \frac{\pi}{2D}$, then
\begin{equation}
(2 \cos(\tfrac{\pi}{3} + x))^D \leq (2 \cos(\tfrac{\pi}{3} + \tfrac{\pi}{2D}))^D
< 0.07,
\end{equation}
where this final inequality arises as follows.  
By Lemma \ref{lemma:cosMonotonic}, it suffices to evaluate the limit as $D \rightarrow \infty$.  For this, we have
\begin{equation}
\lim_{D \rightarrow \infty} (2 \cos(\tfrac{\pi}{3} + \tfrac{\pi}{2D}))^D = 
\lim_{D \rightarrow \infty} \exp(D \log(1- \tfrac{\sqrt{3} \pi}{2 D} + O(D^{-2}))) = \exp(-\tfrac{\sqrt{3} \pi}{2}) = 0.0658\dots.
\end{equation}
Therefore, we derive
\begin{equation}
(-1)^d P_{k,\ell}(\theta_m) \geq 1 - (0.07)(2) = 0.86,
\end{equation}
consistent with \eqref{eq:PkellLowerBound}.

Next suppose $D \equiv 2, 4 \pmod{6}$.  In this case, if $x \leq \frac{\pi}{12D}$, then $\cos(\frac{\pi D}{3} + Dx) \geq - \frac{\sqrt{2}}{2}$.  We also have, using $x \geq \frac{\pi}{3\ell}$ which in turn follows from $d-\frac{a}{3} \geq \frac{1}{3}$, that
\begin{equation}
(2 \cos(\tfrac{\pi}{3} + x))^{-\ell} \leq (2 \cos(\tfrac{\pi}{3} + \tfrac{\pi}{3\ell}))^{-\ell}  = \Big[(2 \cos(\tfrac{\pi}{3} + \tfrac{\pi}{2(3\ell/2)}))^{-3\ell/2} \Big]^{2/3} \leq (0.0658\dots)^{2/3} = 0.16\dots. 
\end{equation}
Combining these bounds, we derive that
\begin{equation}
(-1)^d P_{k,\ell}(\theta_m) \geq 1 - \tfrac{\sqrt{2}}{2} (1 + 0.17) > 0.17.
\end{equation}
Finally, consider the case $x > \frac{\pi}{12D}$.  Then
\begin{equation}
(2 \cos(\tfrac{\pi}{3} + x))^D \leq [(2 \cos(\tfrac{\pi}{3} + \tfrac{\pi}{12D}))^{6D}]^{1/6} \leq \exp(-\tfrac{\sqrt{3} \pi}{12}) = 0.635\dots.
\end{equation}
Therefore, we have
\begin{equation}
(-1)^d P_{k,\ell}(\theta_n) \geq 1 - (0.64)(1+0.17) > 0.25. \qedhere
\end{equation}
\end{proof}

% One particularly pleasant case is the following.
% \begin{mytheo}
%  Suppose that $l \equiv 2 \pmod{6}$.  If $k > l$ and $\ell$ is sufficiently large, 
%  then $\Delta_{k, \ell}$ has at least $\lfloor l/6 \rfloor-1$ zeros in $\mathcal{F}$ on the line $x=1/2$.
% \end{mytheo}
% \begin{proof}
%  We reprise the proof of [Theorem 5.2 make ref], returning to \eqref{eq:DeltaklapproximateformulaOnSideWithTrigFunctions}.  Write $l = 2 + 6L$.  The condition $\theta_j \in [\frac{\pi}{3}, \frac{\pi}{2})$ is then equivalent to $j \in [2/3 + 2L, 1 + 3L)$, i.e., $j \in [1+2L, 1 + 3L)$.  Write $j = 2L + f$, where $f=1,2, \dots, L$.  We compute $\cos(\theta_j) = (-1)^j = (-1)^f$, and
%  \begin{equation}
%   \cos(k \theta_j) = \cos(\frac{k \pi}{3} + \frac{k \pi}{l} (f-\frac23)).
%  \end{equation}
% 
%  
% \end{proof}

\subsection{The extra zero}
\label{section:extrazeroOnSide}
In this section, we prove Theorem \ref{thm:Bkvalue}.  Comparing with Proposition \ref{prop:BklLowerBoundAsymptotic}, we see that when $\ell \equiv 0,4 \pmod{6}$, we need to produce an additional sign change, under appropriate conditions on $k,\ell$.  Consulting Section \ref{section:samplepointsBkl}, we see that if $a=0$, the sample point nearest to $\frac{\pi}{3}$ is $\theta_{2q+1} = \frac{\pi}{3} + \frac{\pi}{\ell}$, and $P_{k,\ell}(\theta_{2q+1}) < 0$.  Meanwhile, for $a=4$, the nearest point is $\theta_{2q+2} = \frac{\pi}{3} + \frac{2\pi}{3\ell}$, and $P_{k,\ell}(\theta_{2q+2}) > 0$.

Let us first examine the easiest cases where $\frac{\pi}{3}$ is an acceptable sample point.  If $k + \ell \not \equiv 0 \pmod{6}$, then $\Delta_{k,\ell}(e^{\pi i/3}) = 0$, so we need to assume $k + \ell \equiv 0 \pmod{6}$.  Then by \eqref{eq:PkellDef}, we have
\begin{equation}
P_{k,\ell}(\tfrac{\pi}{3}) =  2 \cos^2(\tfrac{\pi \ell}{3}) + 2 \cos(\tfrac{\pi \ell}{3}) -1 = \begin{cases}
3, \qquad &\ell \equiv 0 \pmod{6} \\
-1.5, \qquad &\ell \equiv 4 \pmod{6}.
\end{cases}
\end{equation}
This furnishes the desired additional zero.

Now we examine the harder cases where $k + \ell \not \equiv 0 \pmod{6}$.  
First suppose $k+ \ell \equiv 4 \pmod{6}$, which is the case where $\Delta_{k,\ell}$ has a single zero at $e^{\pi i/3}$.
Recall that
\begin{equation}
|z|^{\ell} \Delta_{k,\ell} = H_{\ell}(z) + \frac{H_k(z)}{|z|^{k-\ell}} + \frac{1}{|z|^k} (H_k(z) H_{\ell}(z) - H_{k+\ell}(z)),
\end{equation}
that $H_k(1/2 + iy ) = 2 \cos(k\theta) + O((37/17)^{-k/2})$, so by Corollary \ref{coro:HderivativeEstimate}, that $\frac{d}{d\theta} H_k(1/2 + iy) = -2k \sin(k \theta) + O(2^{-k/2})$ (and likewise for the second derivative, though this is not necessary for this case).  Gathering these facts, we derive
\begin{equation}
\label{eq:DeltaklDerivativeEstimate}
\frac{d}{d \theta} |z|^{\ell} \Delta_{k,\ell}(1/2 + iy) \vert_{\theta = \pi/3} = \frac{d}{d \theta} P_{k,\ell}(\theta) \vert_{\theta = \pi/3} + O(k \thinspace 2^{-\ell/2}). 
\end{equation}

By a direct calculation (using Mathematica for simplicity), we derive
\begin{equation}
P_{k,\ell}'(\tfrac{\pi}{3}) = 
\begin{cases}
\sqrt{3} (2\ell -k), \qquad \ell \equiv 4 \mymod{6}, \thinspace k \equiv 0  \mymod{6} \\
%-\sqrt{3}(k+\ell), \qquad  \ell \equiv 2 \mymod{6}, \thinspace k \equiv 2 \mymod{6} \\
\sqrt{3}(2k-\ell), \qquad  \ell \equiv 0 \mymod{6}, \thinspace k \equiv 4 \mymod{6}.
\end{cases}
\end{equation}
When $a=0$, that is, $\ell \equiv 0 \pmod{6}$, then $P'_{k,\ell}(\frac{\pi}{3}) > 0$, since $k \geq \ell$, while the nearest sample point to $\pi/3$ had a negative value of $P_{k,\ell}$.  Therefore, there exists a positive value of $P_{k,\ell}$ near to $\pi/3$.  This exhibits an additional sign change.

For $a=4$, that is, $\ell \equiv 4 \pmod{6}$, then $P_{k,\ell}'(\frac{\pi}{3}) < 0$ for $k > 2 \ell$ (which, by the way, implies $k \geq 2 \ell + 4$), while the nearest sample point has a positive value of $P_{k,\ell}$, so this will provide an additional sign change here also, provided $k 2^{-\ell/2}$ is small compared to $k-2\ell$.  However, this is always true, since we assume $\ell$ is sufficiently large.
In all, this provides the additional zero for sufficiently large $k$, $\ell$.

Similarly, for the second derivative, we derive
\begin{equation}
\label{eq:Pkdoubleprimeformula}
P_{k,\ell}''(\tfrac{\pi}{3}) = 
\begin{cases}
2(2k^2  + k (-2\ell + 1) - \ell^2  -  \ell), \qquad &\ell \equiv 0 \mymod{6}, \thinspace k \equiv 2  \mymod{6} \\
%4\ell^2 - 2k^2 - 4k \ell + 4\ell - 2 k, \qquad \ell \equiv 2 \mymod{6}, \thinspace k \equiv 0  \mymod{6} \\
2(-k^2 + k(4\ell - 1) - \ell^2  -  \ell), \qquad  &\ell \equiv 4 \mymod{6}, \thinspace k \equiv 4 \mymod{6}.
\end{cases}
\end{equation}
Of course, the first derivative vanishes in this case.
Consider the case $\ell \equiv 0 \pmod{6}$.  Then the polynomial value satisfies
\begin{equation}
\label{eq:PkdoublePrimePolynomialExpression}
2k^2 + k(-2\ell + 1) -\ell^2 -  \ell = 2 ( k - x_{+})(k-x_{-}),
\end{equation}
where 
\begin{equation}
x_{\pm} = \frac{\ell-1 \pm \sqrt{3 \ell^2 - 1}}{2}.
\end{equation}
We see that if $k > x_{+}$, which is precisely the condition $k \geq \stabp_j(\ell)$ in this case, then certainly $P_{k,\ell}''(\frac{\pi}{3}) \geq 2$.  This is much larger than $k^2 2^{-\ell/2}$ unless $\ell \ll \log k$,  but in this case, we easily see from the left hand side of \eqref{eq:PkdoublePrimePolynomialExpression} that $P_{k,\ell}''(\pi/3) \gg k^2$, which 
is large compared to $k^2 2^{-\ell/2}$ for $\ell$ large.  

A similar calculation holds for $\ell \equiv 4 \pmod{6}$. In this case, the roots of the polynomial are
\begin{equation}
\frac{4 \ell -1 \pm \sqrt{12 \ell^2 -12 \ell + 1}}{2},
\end{equation}
which leads to the formula in \eqref{eq:stabpdef}.  We omit the details, since it is nearly identical to the previous case.

\subsection{The exceptional extra zero on the arc}
Using the same ideas as in Section \ref{section:extrazeroOnSide}, we may produce an additional zero on the arc, when it exists.
\begin{myprop}
 Suppose that $L \leq \ell \leq k < \stabp_j(\ell)$.  Then $A_{k,\ell} \geq N_{k,\ell}' + 1$.
\end{myprop}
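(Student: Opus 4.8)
The plan is to locate, on the arc, the single zero that failed to appear on the side in Section~\ref{section:extrazeroOnSide}. First observe that the hypotheses $\ell \le k < \stabp_j(\ell)$ can be met only in the three cases of \eqref{eq:stabpdef}, that is $(\ell \bmod 6,\, j \bmod 6) \in \{(0,2),(4,2),(4,0)\}$, since otherwise $\stabp_j(\ell)=\ell \le k$. In each of these cases the analysis of Section~\ref{section:extrazeroOnSide} showed that for $k<\stabp_j(\ell)$ the relevant derivative of $P_{k,\ell}$ at $\tfrac{\pi}{3}$ has the wrong sign to force an extra sign change on the line $x=1/2$; I will run the analogous local computation on the arc at the corner $\rho=e^{\pi i/3}$ and show the sign is exactly right to supply the missing zero there.

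On the arc I work with the real-valued function $\Psi(\theta):=(F_kF_\ell-F_{k+\ell})(\theta)=e^{iw\theta/2}\Delta_{k,\ell}(e^{i\theta})$, where $w=k+\ell$, which is approximated by $M_{k,\ell}$ as in \eqref{eq:MkellDef}. Since $\rho$ lies in the box $2/5\le x\le 3/5$, $2^{-1/2}\le y\le 1$, I may differentiate $\Psi$ along the arc by means of Lemma~\ref{lemma:HkDerivative} and Corollary~\ref{coro:HderivativeEstimate}, which express $\Psi$ and its first two $\theta$-derivatives at $\tfrac{\pi}{3}$ through the common holomorphic main term with an error that is polynomial in $k$ times $2^{-\ell/2}$. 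The clean point is that $\Psi$ and the side function $\tilde P(\theta):=|z|^{w}\Delta_{k,\ell}(z)$, with $z=\tfrac12+iy=Re^{i\theta}$, are rescaled restrictions of the \emph{same} holomorphic function $\Delta_{k,\ell}$ to two curves through $\rho$ whose tangents $ie^{\pi i/3}$ and $2i$ differ in argument by $\tfrac{\pi}{3}$. Writing $m$ for the order of vanishing of $\Delta_{k,\ell}$ at $\rho$ (so $m=1$ when $w\equiv 4\pmod 6$, the cases $(4,2)$, and $m=2$ when $w\equiv 2\pmod 6$, the cases $(0,2),(4,0)$), the chain rule at $\rho$ gives the exact identity
\[
\frac{\Psi^{(m)}(\tfrac{\pi}{3})}{\tilde P^{(m)}(\tfrac{\pi}{3})}=\frac{e^{iw\pi/6}(i\rho)^m}{(2i)^m}=\frac{e^{i(w+2m)\pi/6}}{2^m},
\]
and since $w+2m\equiv 0\pmod 6$ in each case the prefactor $e^{i(w+2m)\pi/6}=\pm1$ is real, as it must be for $\Psi$ and $\tilde P$ both real.

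It remains to read off signs, reusing Section~\ref{section:extrazeroOnSide}. Combining the displayed identity with $\tilde P^{(m)}(\tfrac{\pi}{3})=2P_{k,\ell}^{(m)}(\tfrac{\pi}{3})+O(\mathrm{poly}(k)\,2^{-\ell/2})$ yields $\Psi^{(m)}(\tfrac{\pi}{3})=\pm 2^{1-m}P_{k,\ell}^{(m)}(\tfrac{\pi}{3})+O(\mathrm{poly}(k)\,2^{-\ell/2})$. In the simple-zero case $(4,2)$ one has $P_{k,\ell}'(\tfrac{\pi}{3})=\sqrt3(2\ell-k)$, nonzero for $k\ne 2\ell=\stabp_j(\ell)$; in the double-zero cases $(0,2)$ and $(4,0)$ the first derivative vanishes and \eqref{eq:Pkdoubleprimeformula} factors $P_{k,\ell}''(\tfrac{\pi}{3})$ as a constant times $(k-x_+)(k-x_-)$ with $x_+=\stabp_j(\ell)$ and $x_-<\ell$, so it keeps a definite sign throughout $\ell\le k<\stabp_j(\ell)$ (the sign of the leading coefficient differing between $\ell\equiv 0$ and $\ell\equiv 4$). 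Thus just to the right of $\tfrac{\pi}{3}$, $\Psi$ has a definite sign, from its linear term when $m=1$ and from its quadratic term when $m=2$. Comparing this with the sign $(-1)^{m_0}$ of $\Psi$ at the nearest interior sample point $\theta_{m_0}$ from Proposition~\ref{prop:Mksamplepoints}, where $m_0=2n+1$ if $j\in\{0,2\}$ and $m_0=2n+2$ if $j\in\{6,8\}$, a short check of $w\bmod 12$ in each of the six sub-cases shows the two signs disagree precisely for $k<\stabp_j(\ell)$. This forces a sign change of $F_kF_\ell-F_{k+\ell}$, hence a zero of $\Delta_{k,\ell}$, strictly inside $(\tfrac{\pi}{3},\theta_{m_0})$, an interval not used in the proof of Theorem~\ref{thm:Akvalue}; therefore $A_{k,\ell}\ge N_{k,\ell}'+1$. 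For $k>\stabp_j(\ell)$ the same computation gives agreement of the two signs, consistent with the extra zero lying on the side.

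The main obstacle is exactly this sign bookkeeping. One must carry the real prefactor $e^{i(w+2m)\pi/6}=\pm1$ through the residues of $w$ modulo $12$ and verify that, once combined with the sign of $P_{k,\ell}^{(m)}(\tfrac{\pi}{3})$ (whose leading behavior flips between $\ell\equiv0$ and $\ell\equiv4$) and with the parity of $m_0$, the comparison yields disagreement for $k<\stabp_j(\ell)$ and agreement for $k>\stabp_j(\ell)$ in all six sub-cases; this is what pins the transition precisely at $k=\stabp_j(\ell)$. A secondary care point is that $\rho$ is a boundary point, so the derivative estimates must be drawn from Lemma~\ref{lemma:HkDerivative} and Corollary~\ref{coro:HderivativeEstimate} rather than the large-$y$ theory, and one must confirm their polynomial-in-$k$ errors are dominated by $P_{k,\ell}^{(m)}(\tfrac{\pi}{3})$, which is linear (respectively quadratic) in $k$ and $\ell$, exactly as in \eqref{eq:DeltaklDerivativeEstimate}.
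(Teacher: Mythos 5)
Your approach is sound, and at its key step it is genuinely different from the paper's. The paper obtains the sign of $(F_kF_\ell-F_{k+\ell})(\theta)$ just to the right of $\tfrac{\pi}{3}$ by a direct (``tedious but direct'') differentiation of the trigonometric main term, producing \eqref{eq:Mkdoubleprime} and the explicit values $M_{k,\ell}'(\tfrac{\pi}{3})$, $M_{k,\ell}''(\tfrac{\pi}{3})$, and only afterwards remarks that these agree, up to a constant factor, with the side polynomials \eqref{eq:Pkdoubleprimeformula}. You derive that proportionality \emph{a priori}: since $\Delta_{k,\ell}$ is holomorphic and vanishes to order $m$ at $\rho$, its restrictions to the arc and to $x=\tfrac12$ (each times a smooth factor equal to $1$ at $\rho$) have $m$-th $\theta$-derivatives $c_m m!\,e^{iw\pi/6}(i\rho)^m$ and $c_m m!\,(2i)^m$ respectively, which is exactly your ratio $e^{i(w+2m)\pi/6}/2^m$. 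I verified this identity; for instance it reproduces $M_{k,\ell}'(\tfrac{\pi}{3})=\sqrt3(2\ell-k)$ when $j=2$, with the sign reversal at $j=8$ coming from the prefactor, exactly as in the paper. Your route buys a reuse of the side computation and a structural explanation of the ``coincidence'' the paper merely observes; the paper's route buys signs readable off explicit formulas with no mod-$12$ bookkeeping. (A byproduct: your identity forces $M^{(m)}_{k,\ell}(\tfrac{\pi}{3})=\pm 2^{1-m}P^{(m)}_{k,\ell}(\tfrac{\pi}{3})$ \emph{exactly}, and in the case $\ell\equiv0$, $k\equiv2\pmod 6$ the paper's printed $M''$ and \eqref{eq:Pkdoubleprimeformula} violate this by a term $3k$; so one of those formulas, or \eqref{eq:stabpdef}, carries a lower-order transcription error -- harmless for $\ell$ large, but your method detects it.)

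Two things must still be done before this is a complete proof, and you have flagged the first yourself. The decisive step -- ``a short check of $w\bmod 12$ in each of the six sub-cases shows the two signs disagree precisely for $k<\stabp_j(\ell)$'' -- is asserted, not carried out, and the proposition lives exactly there. The check does close: e.g.\ for $\ell\equiv0$, $j=2$ one has $w\equiv 2\pmod{12}$, prefactor $-1$, and $P_{k,\ell}''(\tfrac{\pi}{3})<0$ throughout $\ell\le k<\stabp_j(\ell)$ (positive leading coefficient, $x_-<0$), so $\Psi>0$ just right of $\tfrac{\pi}{3}$ against the negative sample value at $\theta_{2n+1}$ from Proposition \ref{prop:Mksamplepoints}; for $j=8$ both the prefactor and the parity of $m_0$ flip, so the disagreement persists; the $\ell\equiv4$ cases are analogous. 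In writing this out you must also address how the sign survives the $O(\mathrm{poly}(k)2^{-\ell/2})$ errors when $k$ is close to $\stabp_j(\ell)$, where $P^{(m)}_{k,\ell}(\tfrac{\pi}{3})$ is small: as in the paper, use that it is an integer-coefficient polynomial evaluated at integers (hence $|P''_{k,\ell}(\tfrac{\pi}{3})|\geq 2$ when nonzero), with the regime $\ell\ll\log k$ handled by the crude bound $|P''|\gg k^2$. Second, your parenthetical claim that $(\tfrac{\pi}{3},\theta_{m_0})$ is ``an interval not used in the proof of Theorem \ref{thm:Akvalue}'' is true but needs its one-line reason: in all three exceptional cases the table gives $N_{k,\ell}'=N_{k,\ell}$, i.e.\ Theorem \ref{thm:Akvalue} counts only sign changes between consecutive sample points there, so the zero you produce is genuinely additional.
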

There are only three cases where $\stabp_j(\ell) \neq \ell$, given by the conditions on the right hand side of \eqref{eq:stabpdef}.  
\begin{proof}
The proof here is rather similar to that appearing in Section \ref{section:extrazeroOnSide}, in that we will estimate the first and second derivatives of $M_{k,\ell}(\theta)$ at $\pi/3$.  We begin with the claim that
\begin{equation}
\frac{d^a}{d \theta^a} (F_k(\theta) F_{\ell}(\theta) - F_{k+\ell}(\theta)) \vert_{\theta=\pi/3} = \frac{d^a}{d \theta^a} M_{k,\ell}(\theta) \vert_{\theta=\pi/3} + O(k^a 2^{-\ell/2}),
\end{equation}
for $a=1,2$.  This is similar to, yet even easier than, 
\eqref{eq:DeltaklDerivativeEstimate},
%Corollary \ref{coro:HderivativeEstimate}, 
since the use of the chain rule is simpler since $G_k(z)$ is holomorphic (also one should recall $F_k(\theta) = G_k(e^{i\theta})$). 

First, suppose that $k \equiv 0 \pmod{6}$ and $\ell \equiv 4 \pmod{6}$, whence $j \in \{2,8\}$.  In this case, there is a single zero of $\Delta_{k,\ell}$ at $e^{i\pi/3}$, and correspondingly we have $M_{k,\ell}(\frac{\pi}{3}) = 0$.  By a tedious but direct calculation, we have
\begin{equation}
 M_{k,\ell}'(\tfrac{\pi}{3}) = \sqrt{3}(-k + 2\ell), 
\end{equation}
provided $j=2$.  If $j=8$, the sign here is reversed.  For $j=2$, the nearest sample point to $\pi/3$ had $m=2n+1$, whence $M_{k,\ell}(\theta_{2n+1}) < 0$, while for $j=8$, the sign here is switched.  Thus, if $k < 2 \ell$, that is, $k < \stabp_2(\ell)$, then $M_{k,\ell}'(\frac{\pi}{3}) > 0$.  By similar reasoning to that occuring in Section \ref{section:extrazeroOnSide}, one can show that $\frac{d}{d\theta} (F_k(\theta) F_{\ell}(\theta) - F_{k+\ell}(\theta)) \vert_{\theta=\pi/3} > 0$, at least, provided $\ell$ is sufficiently large.  This provides the desired additional zero.  The same argument works for $j=8$ with signs reversed.

Next assume that $\ell \equiv 0 \pmod{6}$ and $k \equiv 2 \pmod{6}$, or $k \equiv \ell \equiv 4 \pmod{6}$.  In these cases, there is a double zero at $\pi/3$, and we have
\begin{multline}
\label{eq:Mkdoubleprime}
 M_{k,\ell}''(\tfrac{\pi}{3}) = -\tfrac12 (k-\ell)^2 \cos(\tfrac{(k-\ell)\pi}{6})
 + i^{-\ell} \cos(\tfrac{k\pi}{6}) [\tfrac{-k^2+3\ell^2 + 4\ell}{2}]
 \\ 
 i^{-k} \cos(\tfrac{\ell \pi}{6}) [\tfrac{-\ell^2+3k^2 + 4k}{2}]
 + \sqrt{3} k \ell [i^{-\ell} \sin(\tfrac{k\pi}{6}) + i^{-k} \sin(\tfrac{\ell \pi}{6})].
\end{multline}
If $\ell \equiv 0 \pmod{6}$, $k\equiv 2 \pmod{6}$, and say $j=2$, then by a direct calculation, we have
\begin{equation}
 M_{k,\ell}''(\tfrac{\pi}{3}) =
 -2k^2 + 2k(\ell+1) + \ell^2 + \ell.
\end{equation}
% \begin{equation}
%  M_{k,\ell}''(\tfrac{\pi}{3}) =
%  -\frac14 (k-\ell)^2 
%  + \frac12 [\frac{-k^2+3\ell^2 + 4\ell}{2}]
%  +
%  (-1) [\frac{-\ell^2+3k^2 + 4k}{2}]
%  + \sqrt{3} k \ell \frac{\sqrt{3}}{2}
% \end{equation}
If $j=8$, the sign is switched.  As in the previous case, the nearest sample point has a negative value of $M_{k,\ell}$, so if $M_{k,\ell}'' > 0$ this will produce an extra zero (provided the error term is negligible of course).  
Up to a factor $-2$, this formula agrees with \eqref{eq:Pkdoubleprimeformula}, so this provides the desired additional zero for $\ell$ large.

Simplifying \eqref{eq:Mkdoubleprime} in the case $k\equiv \ell \equiv 4 \pmod{6}$, and with $j=0$, we obtain
\begin{equation}
 M_{k,\ell}''(\tfrac{\pi}{3}) = -k^2 + k(4\ell - 1) - \ell^2 - \ell .
\end{equation}
% \begin{multline}
%  M_{k,\ell}''(\tfrac{\pi}{3}) = -\frac12 (k-\ell)^2 
%  + (-\frac12) [\frac{-k^2+3\ell^2 + 4\ell}{2}]
%  \\ 
%  (-\frac12) [\frac{-\ell^2+3k^2 + 4k}{2}]
%  + 3 k \ell .
% \end{multline}
Again, when $j=6$, the sign here is switched, and again we obtain the same polynomial as in \eqref{eq:Pkdoubleprimeformula}, up to a factor $-2$.  This provides the desired additional zero. 
\end{proof}

\end{document}